\documentclass[a4paper,12pt]{amsart}
\usepackage[T1]{fontenc}
\usepackage[utf8]{inputenc}
\usepackage[english]{babel}
\usepackage{amsmath, amssymb, amsthm, amsfonts}
\usepackage{mathrsfs}
\usepackage{tikz}
\usetikzlibrary{arrows}
\usetikzlibrary{matrix}
\usepackage[left=2.5cm, right=2.5cm, top=2cm]{geometry}
\usepackage{enumerate}
\usepackage{hyperref}
\date{}
\setlength{\parindent}{0pt}
\theoremstyle{plain}
\newtheorem{thm}{Theorem}[section]
\newtheorem{lem}{Lemma}[section]

\newtheorem{prop}{Proposition}[section]
\newtheorem{algo}{Algorithm}[section]
\nonstopmode\numberwithin{equation}{section}
\theoremstyle{definition}

\newtheorem{rema}{Remark}[section]

\newtheorem*{ques*}{Question}

\makeatletter
\def\tagform@#1{\maketag@@@{\ignorespaces#1\unskip\@@italiccorr}}
\let\orgtheequation\theequation
\def\theequation{(\orgtheequation)}
\makeatother
\let\orgautoref\autoref

\renewcommand{\autoref}[1]{\def\equationautorefname{}\orgautoref{#1}}

\setlength{\parindent}{0pt}

\usepackage{fancyhdr}
% just to generate text for the example

\newcommand\shorttitle{The Short Title}

\fancyhf{}

\fancyhead[C]{%
\ifodd\value{page}
  \small\scshape\authors
\else
  \small\scshape\shorttitle
\fi
}
\fancyhead[R]{\thepage}
\pagestyle{fancy}

\begin{document}

\title[A novel  two-point gradient
method]{A novel two-point gradient method for Regularization of inverse problems in Banach spaces}
\author{ Gaurav Mittal, Ankik Kumar Giri}
\email{gmittal@ma.iitr.ac.in, ankik.giri@ma.iitr.ac.in}
\address{Department of Mathematics, Indian Institute of Technology Roorkee, Roorkee, India}
\maketitle
\begin{abstract}
In this paper, we introduce a novel two-point gradient method for solving the ill-posed problems in Banach spaces and study its convergence analysis. The method is based on the well known iteratively regularized  Landweber iteration method together with an extrapolation strategy. The general formulation of iteratively regularized  Landweber iteration method in Banach spaces excludes the use of certain functions such as total variation like penalty functionals, $L^1$ functions etc.  The novel scheme presented in this paper allows to use such non-smooth penalty terms   that can be helpful in practical applications involving the reconstruction of  several important features of solutions such as piecewise constancy and sparsity. We carefully  discuss the choices for important parameters, such as  combination parameters and step sizes involved in the design of the method. Additionally, we discuss an example to validate our assumptions.
\end{abstract}
\begin{center}

\hspace{-12mm}\textbf{Keywords:}   Regularization,  Iterative methods, Two point gradient method\\ \smallskip
\hspace{-50mm}\subjclass {}{\textbf{AMS Subject Classifications}: 65J15, 65J20, 47H17}\\
\end{center}\medskip\section{Introduction}
Let $F:D(F)\subset U\to V$ be an operator between the Banach spaces $U$ and $V$, with domain $D(F)$. In this paper, our main aim is to solve the following inverse problems \begin{equation}\label{1.1}
F(u)=v.
\end{equation}
In general, due to unstable dependence of
solutions on the small  data perturbations,  inverse problems of the form \autoref{1.1} are ill-posed in nature. Throughout this paper, we assume that the data in $(1.1)$ is attainable, i.e. \autoref{1.1} has a solution, which may not be unique. Instead of the exact data $v$, we assume the availability of perturbed data $v^{\delta}$ satisfying \begin{equation}\label{1.2}
\|v-v^{\delta}\|\leq \delta.
\end{equation}
Consequently, in order to obtain the approximate solutions of \autoref{1.1}, regularization methods  are required. In Hilbert spaces, one of the  most prominent regularization method is  Landweber iteration due to its simplicity and  robustness
with respect to noise. We refer to \cite{Engl, Hanke} for the detailed study of this method in linear and  nonlinear inverse problems.

Due to the tendency of classical Landweber iteration to over-smooth the solutions  in Hilbert spaces,   it is difficult to deduce special features of the desired solution such as discontinuity and sparsity through this scheme. Therefore, various
modifications of Landweber iteration have been proposed in Banach spaces  to overcome this drawback, see \cite{Bot, Jin1, Jin2, Kalten, Schuster, Wang} etc.  In \cite{Schuster, Kalten}, the following Landweber iteration scheme has been proposed:\begin{equation} \label{(1.3)}
\begin{split}\Im_{k+1}^{\delta} =\Im_{k}^{\delta}-\upsilon_k^{\delta}F'(u_k^{\delta})^*J_s^V((F(u_k^{\delta})-v^{\delta}),\\ \newline u_{k+1}^{\delta} =J_q^{U^*}(\Im_{k+1}^{\delta}),\hspace{37mm}
\end{split}
\end{equation}
where $F'(u)^*$ denote the adjoint of  Fr\'echet derivative $F'(u)$ of $F$ at $u$, $\upsilon_k^{\delta}$ is the step size, $J_s^V:V\to V^*$ and $J_q^{U^*}:U^*\to U$ are the duality mappings with gauge functions $x\to x^{s-1}$ and $x\to x^{q-1}$, respectively with $1<s, q <\infty$.
Basically, Landweber iteration \autoref{(1.3)} has been obtained by applying a gradient method for solving the minimization problem $\min \frac{1}{s}\|F(u)-v^{\delta}\|^s.$ Motivated by the Landweber iteration \autoref{(1.3)}, the  following modification of Landweber iteration well known as  iteratively regularized Landweber iteration  has been given in \cite{Schuster, Kalten}:\begin{equation} \label{(1.4)}
\begin{split}\Im_{k+1}^{\delta} =(1-\alpha_k)\Im_{k}^{\delta}-\upsilon_k^{\delta}F'(u_k^{\delta})^*J_s^V((F(u_k^{\delta})-v^{\delta})+\alpha_k\Im_0,\\ \newline u_{k+1}^{\delta} =J_q^{U^*}(\Im_{k+1}^{\delta}),\hspace{67mm}
\end{split}
\end{equation}
where $\{\alpha_k\}\in [0, 1]$  is an appropriately chosen sequence, $\Im_0=\Im_0^{\delta}$,  and $u_0\in U$ is an initial point. The additional term $\alpha_k(\Im_0-\Im_k^{\delta})$ in the method \autoref{(1.4)} compared to method \autoref{(1.3)} is motivated by the well known iteratively regularized Gauss-Newton method (cf. \cite{Scherzer}). The respective formulations \autoref{(1.3)}, \autoref{(1.4)} of Landweber and iteratively regularized Landweber iterations, however, are not defined for incorporating the $L^1$ and the total variation like penalty functionals. With general uniformly convex penalty functionals, Landweber-type iteration was introduced in \cite{Bot, Jin2} for linear as well as non-linear ill-posed problems. The method formulated in \cite{Bot, Jin2} can be written as \begin{equation}\label{(1.5)} 
\begin{split}\Im_{k+1}^{\delta} =\Im_{k}^{\delta}-\upsilon_k^{\delta}F'(u_k^{\delta})^*J_s^V((F(u_k^{\delta})-v^{\delta}),\\ \newline u_{k+1}^{\delta} =\arg \min_{u\in U}\big\{\varphi(u)-\langle \Im_{k+1}^{\delta}, u\rangle\big\},\hspace{8mm}
\end{split}
\end{equation}
where $\varphi:U\to(-\infty, \infty)$ is a proper uniformly convex semi-continuous functional. The advantage of this method is that the functional $\varphi$ can be wisely chosen so that it can be utilized in determining different characteristics of the solution.

Despite  the simplicity in the implementation of  Landweber iteration, various other newton type methods have been investigated in the literature, primarily due to its slowness \cite{Kalten}. The newton type methods are comparatively faster than Landweber iteration,  however, while dealing with each iteration step they always  spend more computational time. Therefore, as desired,  by preserving the implementation simplicity of Landweber iteration, various  accelerated Landweber iterations have been proposed in the literature.

Based on orthogonal polynomials and  spectral theory,   Hanke \cite{Hanke2} proposed  a family of accelerated Landweber iterations for linear inverse problems in Hilbert spaces. But this accelerated family  is no longer available to use general convex penalty functionals. Hein  et al. \cite{Hein} presented an  accelerated Landweber iteration in Banach spaces by carefully choosing the step size of each iteration. After then, using the  sequential subspace optimization strategy, different versions of accelerated Landweber iteration have been discussed in \cite{Hegland, Shop}. Recently, the following accelerated  Landweber iteration based on Nesterov’s strategy  \cite{Nest} (this strategy was originally proposed  to accelerate the gradient method) has been discussed (cf. \cite{Jin3}): \begin{equation}\label{(1.6)}\begin{split}
w_k^{\delta}=u_k^{\delta}+\frac{k}{k+\varsigma}(u_k^{\delta}-u_{k-1}^{\delta}),\\ 
u_{k+1}^{\delta}=w_k^{\delta}-\upsilon_k^{\delta}F'(u_k^{\delta})^*((F(u_k^{\delta})-v^{\delta}),\hspace{-11mm}
\end{split}
\end{equation}
where $\varsigma\geq 3$, $u_{-1}^{\delta}=u_0^{\delta}=u_0$ is an initial guess. A further modification of \autoref{(1.6)} known as two-point gradient method was proposed in \cite{Hub} by substituting general connection parameters $\lambda_k^{\delta}$ in place of $\frac{k}{k+\varsigma}$. Very recently, Zhong et al. \cite{Zhong} proposed and analyzed a two-point gradient method in Banach spaces based on the Landweber iteration and an extrapolation
strategy. Their method can be written as the following: \begin{equation}\label{(1.7)}\begin{split}
\Im_k^{\delta}=\gamma_k^{\delta}+\lambda_k^{\delta}(\gamma_k^{\delta}-\gamma_{k-1}^{\delta})\\ w_k^{\delta}=\arg\min_{w\in U}\big\{\varphi(w)-\langle \Im_{k}^{\delta}, w\rangle\big\},\hspace{-13mm}\\\gamma_{k+1}^{\delta}=\Im_k^{\delta}-\upsilon_k^{\delta}F'(w_k^{\delta})^*J_s^V((F(w_k^{\delta})-v^{\delta}),\hspace{-25mm}
\end{split}
\end{equation}
with suitably chosen combination parameters $\lambda_k^{\delta}$ and step sizes $\upsilon_k^{\delta}$. Then a discrepancy principle has been incorporated to terminate the  iteration and the approximate solution 
is calculated as follows: $$u_k^{\delta}=\arg\min_{u\in U}\big\{\varphi(u)-\langle \gamma_{k}^{\delta}, u\rangle\big\}.$$
Observe that \autoref{(1.7)} becomes Landweber iteration \autoref{(1.5)} for $\lambda_k^{\delta}=0$. And \autoref{(1.7)}  with $\lambda_k^{\delta}=\frac{k}{k+\varsigma}$ is a refined
version of the  Landweber iteration via Nesterov acceleration \cite{Jin3}.

In this paper, we use the general uniformly convex penalty term $\varphi$ to propose a variant of the two point gradient method \autoref{(1.7)} by incorporating  the iteratively regularized Landweber  iteration scheme \autoref{(1.4)} with an  extrapolation step. The proposed method takes the form \begin{equation}\label{(1.8)}\begin{split}
\Im_k^{\delta}=\gamma_k^{\delta}+\lambda_k^{\delta}(\gamma_k^{\delta}-\gamma_{k-1}^{\delta})\hspace{10mm}\\ w_k^{\delta}=\arg\min_{w\in U}\big\{\varphi(w)-\langle \Im_{k}^{\delta}, w\rangle\big\},\hspace{-4mm}\\\gamma_{k+1}^{\delta}=(1-\alpha_k)\Im_k^{\delta}-\upsilon_k^{\delta}F'(w_k^{\delta})^*J_s^V((F(w_k^{\delta})-v^{\delta})+\alpha_k\Im_0,\hspace{-45mm}
\end{split}
\end{equation}
 with suitably chosen combination parameters $\lambda_k^{\delta}$, step sizes $\upsilon_k^{\delta}$ and $\{\alpha_k\}\in [0,1]$. Further, we also incorporate a discrepancy principle  to terminate the  iteration and the approximate solution $u_k^{\delta}$  will be calculated by solving the following problem: $$\arg\min_{u\in U}\big\{\varphi(u)-\langle \gamma_{k}^{\delta}, u\rangle\big\}.$$ Observe that  \autoref{(1.8)} with $\{\alpha_k\}=\{0\}$ becomes method of the form \autoref{(1.7)}. In the case of Hilbert spaces $U$ and $V$ with $\{\alpha_k\}=\{0\}$ and $\varphi(z)=\|z\|^2/2$, \autoref{(1.8)} is nothing but the two-point gradient method introduced in \cite{Hub}. Unlike \cite{Hub}, the method \autoref{(1.8)} is not only suitable for inverse problems in Banach spaces, but  also permits   to use total variation like functions as well as $L^1$ functionals. However, due to non-Hilbertian structures of $U$ and $V$ and non-smoothness of $\varphi$, we need to incorporate various geometrical properties of Banach spaces and tools from convex analysis to study the convergence analysis.

 In order to study the convergence analysis of our method \autoref{(1.8)}, we need to employ certain conditions on the combination parameters $\lambda_k^{\delta}$ and the step sizes $\upsilon_k^{\delta}$. We adapt the discrete backtracking search (DBTS) algorithm considered in \cite{Hub, Zhong} to find the non-trivial combination parameters for our method. In our analysis, we incorporate standard assumptions such as tangential cone condition \cite{Hanke}, conditional Lipschitz stability of the inverse problem \cite{Hoop1, Hoop2}, boundedness of the approximation of Fr\'echet derivative etc. 
 
 It is worth to mention that on taking $\lambda_k^{\delta}\neq 0$ in \autoref{(1.8)}, the method becomes a novel  variant of  the two-point gradient method based on iteratively regularized Landweber iteration method. Therefore, our contribution in this paper is twofold.  We discuss the convergence analysis of our novel scheme \autoref{(1.8)} based on iteratively regularized Landweber iteration method together with an extrapolation step.
  Complementary, we also get the  convergence analysis of the following variant of iteratively regularized Landweber iteration method which is not  discussed in the literature yet: \begin{equation*}\begin{split}
\hspace{0mm}\gamma_{k+1}^{\delta}=(1-\alpha_k)\gamma_k^{\delta}-\upsilon_k^{\delta}F'(w_k^{\delta})^*J_s^V((F(w_k^{\delta})-v^{\delta})+\alpha_k\gamma_0,\hspace{-12mm}\\ u_k^{\delta}=\arg\min_{u\in U}\big\{\varphi(u)-\langle \gamma_{k}^{\delta}, u\rangle\big\}.\hspace{30mm}
\end{split}
\end{equation*}

The paper is organized in the following manner.  In Section $2$,  some preliminaries from
convex analysis are given. In Section $3$, we exhibit our novel two-point gradient method with a general uniformly convex penalty term together with its  detailed convergence analysis. The section $4$ includes the discussion on the  choices of  combination parameters. In particular, we discuss the modified  DBTS algorithm in this section. { We also compare our method with the method of \cite{Zhong} in this section}. The Section $5$ comprises discussion on a severe ill-posed problem on which our novel method is applicable.
 Finally in Section $6$, we conclude the paper.

\section{Preliminaries}
In this section, we discuss some basic concepts related to convex analysis and Banach spaces. Most of these details can be found in \cite{Z\u alinscu}.
Let $\|\cdot\|, U^*$ denote the norm and dual space, respectively of a Banach space $U$. We write $\langle \gamma, u\rangle=\gamma(u)$ for the duality mapping for a given $\gamma\in U^*$ and $u\in U$. 
In our analysis, we consider the convex function $u\to \frac{\|u\|^s}{s}$ for $1<s<\infty$. Its subgradient at $u$ is defined as\begin{equation*}
J_s^U(u):=\big\{\gamma\in U^*:\|\gamma\|=\|u\|^{s-1}\ \text{and}\ \langle \gamma, u\rangle=\|u\|^s\big\}.
\end{equation*}
This subgradient gives the set valued duality mapping $J_s^U:U\to 2^{U^*}$ of $U$ with the gauge function $x\to x^{s-1}$. We require the duality mapping $J_s^U$ to be single valued in our analysis. So, in order to achieve this we define the notion of uniform smoothness. A Banach space $U$ is said to be uniformly smooth if $\lim_{x\to0}\frac{\rho_U(x)}{x}=0$, where $\rho_U(x)$ is the smoothness modulus defined as\begin{equation*}
\rho_U(x):=\sup\{\|\tilde{u}+u\|+\|\tilde{u}-u\|-2:\|\tilde{u}\|=1, \|u\|\leq x\}.
\end{equation*} 
It can be seen that if $U$ is uniformly smooth, then duality mappings $J_s^U$ are uniformly bounded on bounded sets and single valued for every $1<s<\infty$. Some examples of uniformly smooth Banach spaces are $\ell^s$, $W^{k, s}$ and $L^s$.

 Let $\partial\varphi(u)$ denotes the subdifferential of a convex function $\varphi(u):U\to(-\infty, \infty]$ at $u\in U$. Mathematically, we have \begin{equation*}
\partial\varphi(u):=\{\gamma\in U^*: \varphi(\tilde{u})-\varphi(u)-\langle \gamma, \tilde{u}-u\rangle\geq 0\ \text{for all}\ \tilde{u}\in U\}.
\end{equation*}
Let $D(\varphi):=\{u\in U:\varphi(u)<\infty\}$ be the effective domain of $\varphi$ and let $D(\partial\varphi):=\{u\in D(\varphi):\partial\varphi(u)\neq\emptyset\}$. A proper convex function $\varphi : U \to (-\infty, \infty]$ is said to be uniformly convex if there
exists a  function $\Psi : [0,\infty) \to [0,\infty)$   such that
\begin{equation}\label{(2.1)}
\varphi(\tau \tilde{u}+(1-\tau)u)+\tau(1-\tau)\Psi(\|u-\tilde{u}\|)\leq \tau\varphi(\tilde{u})+(1-\tau)\varphi(u), \ \forall\ \tilde{u}, u\in U, 
\end{equation}
where $\Psi$ is strictly increasing and satisfying the boundary condition $\Psi(0)=0$, and $\tau\in [0, 1]$.  The Bregman distance between $\tilde{u}\in U$ and $u\in U$, induced by $\varphi$ in the direction $\gamma\in \partial\varphi(u)$ at $u$ is defined as \begin{equation}\label{(2.2)}
\mathcal{D}_{\gamma}\varphi(\tilde{u}, u)=\varphi(\tilde{u})-\varphi(u)-\langle \gamma,  \tilde{u}-u\rangle,
\end{equation}
One can easily see that $\mathcal{D}_{\gamma}\varphi(\tilde{u}, u)\geq 0$ and it satisfies the following three point identity\begin{equation}\label{(2.3)}
\mathcal{D}_{\gamma_2}\varphi(u, u_2)-\mathcal{D}_{\gamma_1}\varphi(u, u_1)=\mathcal{D}_{\gamma_2}\varphi(u_1, u_2)+\langle \gamma_2-\gamma_1, u_1-u\rangle
\end{equation}
for all $u_1, u_2\in D(\partial\varphi), u\in D(\varphi),$ and $\gamma_i\in \partial\varphi(u_i), i=1, 2$.

To this end, let us move on to recall  the concept of  Legendre–Fenchel conjugate. The Legendre–Fenchel conjugate of a proper lower semi-continuous convex function $\varphi:U\to(-\infty, \infty]$ is defined as $$\varphi^*(\gamma):=\sup_{u\in U}\big\{\langle \gamma, u\rangle-\varphi(u)\big\}, \ \ \gamma\in U^*.$$Observe that $\varphi^*(\gamma)$ is proper, convex and lower semi-continuous. Further, if $U$ is a reflexive Banach space, then \begin{equation}\label{(2.4)}
\gamma\in \partial\varphi(u)\Longleftrightarrow u\in \partial\varphi^*(\gamma)\Longleftrightarrow \varphi(u)+\varphi^*(\gamma)=\langle\gamma, u\rangle.
\end{equation}
Next, if we consider $\Psi(t)=c_0t^p$ for $p>1$ and $c_0>0$ in \autoref{(2.1)}, then the function $\varphi$ is called $p$-convex. It can be proved that $\varphi$ is $p$-convex if and only if \begin{equation} \label{(2.5)}
\mathcal{D}_{\gamma}\varphi(\tilde{u}, u)\geq c_0\|u-\tilde{u}\|^p, \ \ \forall \tilde{u}\in U, u\in D(\partial\varphi), \gamma\in \partial \varphi(u).
\end{equation}
Now we recall some  properties of the Legendre–Fenchel conjugate $\varphi^*$. For $p>1$, if $\varphi$ is $p$-convex, then from \cite[Corollary 3.5.11]{Z\u alinscu} it is known that $D(\varphi^*)=U^*, \varphi^*$ is Fr\'echet differentiable and $\nabla\varphi^*:U^*\to U$ fulfills\begin{equation}\label{(2.6)}
\|\nabla\varphi^*(\gamma_1)-\nabla\varphi^*(\gamma_2)\|\leq \bigg(\frac{\|\gamma_1-\gamma_2\|}{2c_0}
\bigg)^{\frac{1}{p-1}}, \ \ \forall\ \gamma_1, \gamma_2\in U^*.\end{equation}
Consequently, \autoref{(2.5)} implies that \begin{equation}\label{(2.7)}
u=\nabla \varphi^*(\gamma)\Longleftrightarrow\gamma\in \partial\varphi(u)\Longleftrightarrow u=\arg\min_{w\in U}\big\{\varphi(w)-\langle\gamma, w\rangle\big\}.
\end{equation}
Now if $\varphi$ is $p$-convex for $p>1$ and $u, \tilde{u}\in D(\partial\varphi), \gamma \in \partial\varphi(u), \tilde{\gamma}\in \partial\varphi(\tilde{u})$, then \autoref{(2.2)} and \autoref{(2.4)} lead to \begin{equation}
\label{(2.8)}\begin{split}
\mathcal{D}_{\gamma}(\tilde{u}, u)=\varphi^*(\gamma)-\varphi^*(\tilde{\gamma})-\langle \gamma-\tilde{\gamma}, \nabla \varphi^*(\tilde{\gamma})\rangle\\ =\int_0^1 \langle \gamma-\tilde{\gamma}, \nabla \varphi^*(\tilde{\gamma}+t(\gamma-\tilde{\gamma}))-\nabla \varphi^*(\tilde{\gamma})\rangle\, dt.\hspace{-20mm}
\end{split}
\end{equation}
Combining the estimate\autoref{(2.8)} with \autoref{(2.6)} to obtain
\begin{equation}
\label{(2.9)}\begin{split}
\mathcal{D}_{\gamma}(\tilde{u}, u)\leq \|\gamma-\tilde{\gamma}\|\int_0^1 \|\nabla \varphi^*(\tilde{\gamma}+t(\gamma-\tilde{\gamma}))-\nabla \varphi^*(\tilde{\gamma})\|\, dt \\ \leq\frac{1}{p^*(2c_0)^{p^*-1}} \|\gamma-\tilde{\gamma}\|^{p^*}, \ \text{where}
\ \frac{1}{p^*}+\frac{1}{p}=1.\hspace{7mm}
\end{split}
\end{equation}
\section{Convergence analysis of  the novel two point gradient method}
Throughout this section, we assume that $F$ satisfies \autoref{1.1}, data in \autoref{1.1} is attainable, $U$ is a reflexive Banach space and $V$ is a uniformly smooth Banach space. Let $\varphi:U\to(-\infty, \infty)$ denotes a general convex function which will be used as a penalty term. In order to exhibit our results, we need to have  certain assumptions accumulated in the following subsection.
\subsection{Assumptions}
\begin{enumerate}
\item The operator $F$ is weakly closed on its domain $D(F)$.
\item The function $\varphi:U\to(-\infty, \infty)$ is a proper, $p$-convex with $p>1$, weak lower semi-continuous such that \autoref{(2.5)} holds for some $c_0>0$.
\item There exist $u_0\in U$ and $\gamma_0\in \partial\varphi(u_0)$ such that \autoref{1.1} has a solution $u^*\in D(\varphi)$ with \begin{equation*}
\mathcal{D}_{\gamma_0}\varphi(u^*, u_0)\leq c_0\epsilon^p,
\end{equation*}
and $B(u_0, 3\epsilon)\subset D(F)$, where $B(u_0, \epsilon)$ denotes the closed ball of radius $\epsilon>0$ around $u_0$.
\item The inversion has the following 
Lipschitz type stability (cf. \cite{Hoop1, Hoop2}), i.e., there exists a constant $\mathcal{C} > 0$ such that
\begin{equation*}
\mathcal{D}_{\gamma}\varphi( \tilde{u}, u)\leq \mathcal{C}\|F(\tilde{u})-F(u)\|^p \ \ \forall u, \tilde{u}\in B(u_0, 3\epsilon)
\end{equation*}
for $\gamma\in \partial\varphi(u)$.
\item There exists a family of bounded linear operators $\{L(u):U\to V\}_{u\in B(u_0, 3\epsilon)\cap D(\varphi)}$ such that the function $$u\to L(u)\ \ \text{is continuous on}\ B(u_0, 3\epsilon)\cap D(\varphi).$$ Further, there exists a constant $\eta$ with $0\leq \eta<1$ such that the tangential cone condition\begin{equation*}
\|F(\tilde{u}) - F(u) - L(u)(\tilde{u}-u)\|\leq  \eta \|F(\tilde{u})- F(x)\|
\end{equation*}
holds for all $u, \tilde{u}\in B(u_0, 3\epsilon)\cap D(\varphi).$
\item There exists a constant $C_0>0$ such that for all $u\in B(u_0, 3\epsilon)$, we have \begin{equation*}
\|L(u)\|_{U\to V}\leq  C_0.\end{equation*}
\end{enumerate}
We define $u^{\dagger}$ to be solution of \autoref{1.1} which satisfy \begin{equation}\label{(3.1)}
\mathcal{D}_{\gamma_0}\varphi(u^{\dagger}, u_0)=\min_{u\in D(F)\cap D(\varphi)}\big\{\mathcal{D}_{\gamma_0}\varphi(u, u_0): F(u)=v\big\}.
\end{equation}
By employing the weak closedness of $F$, $p$-convexity and weak lower semicontinuity of $\varphi$ and reflexivity of $U$, it can be shown that such a $u^{\dagger}$ exists.
Further, the following lemma guarantees the unique solution of \autoref{1.1} satisfying \autoref{(3.1)}. See, \cite[Lemma 3.2]{Jin2} for its proof.\begin{lem}
There exists a unique solution of $\autoref{1.1}$ satisfying $\autoref{(3.1)}$, provided the assumptions discussed in Subsection $3.1$ hold.
\end{lem}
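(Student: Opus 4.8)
The plan is to obtain existence by the direct method of the calculus of variations and to deduce uniqueness immediately from the conditional Lipschitz stability assumption (the fourth assumption of Subsection~3.1). First I would set
\[
m:=\inf\big\{\mathcal{D}_{\gamma_0}\varphi(u,u_0): u\in D(F)\cap D(\varphi),\ F(u)=v\big\}.
\]
Attainability of the data together with the third assumption guarantees that this infimum is taken over a nonempty set and that $0\le m\le \mathcal{D}_{\gamma_0}\varphi(u^*,u_0)\le c_0\epsilon^p<\infty$. Picking a minimizing sequence $(u_n)\subset D(F)\cap D(\varphi)$ with $F(u_n)=v$ and $\mathcal{D}_{\gamma_0}\varphi(u_n,u_0)\to m$, the $p$-convexity estimate \autoref{(2.5)} gives $c_0\|u_n-u_0\|^p\le \mathcal{D}_{\gamma_0}\varphi(u_n,u_0)$, so $(u_n)$ is bounded; by reflexivity of $U$ a subsequence satisfies $u_{n_k}\rightharpoonup u^{\dagger}$. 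Since $F(u_{n_k})=v\to v$ and $F$ is weakly closed, $u^{\dagger}\in D(F)$ and $F(u^{\dagger})=v$. Using \autoref{(2.2)}, the weak continuity of $\langle\gamma_0,\cdot-u_0\rangle$ and the weak lower semicontinuity of $\varphi$,
\[
\mathcal{D}_{\gamma_0}\varphi(u^{\dagger},u_0)\le \liminf_{k\to\infty}\mathcal{D}_{\gamma_0}\varphi(u_{n_k},u_0)=m ,
\]
so in particular $u^{\dagger}\in D(\varphi)$ and $u^{\dagger}$ attains the minimum in \autoref{(3.1)}.

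For uniqueness, suppose $u_1,u_2$ both satisfy \autoref{(3.1)}; then both attain the value $m\le c_0\epsilon^p$, and \autoref{(2.5)} forces $\|u_i-u_0\|\le\epsilon$, hence $u_1,u_2\in B(u_0,3\epsilon)$, which is exactly the region on which the fourth assumption applies. Choosing $\gamma_2\in\partial\varphi(u_2)$ (nonempty, since $\varphi$ is proper, convex, lower semicontinuous and finite-valued on $U$, hence continuous), the conditional Lipschitz stability with $\tilde u=u_1$, $u=u_2$ yields $\mathcal{D}_{\gamma_2}\varphi(u_1,u_2)\le \mathcal{C}\|F(u_1)-F(u_2)\|^p=0$; combined once more with \autoref{(2.5)} this gives $c_0\|u_1-u_2\|^p\le 0$, i.e. $u_1=u_2$.

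The argument involves no nontrivial estimate beyond one use each of \autoref{(2.5)} and the stability hypothesis, so there is no real obstacle; the two points that require care are: (i) that the weak limit of the minimizing sequence remains a solution and that its Bregman distance to $u_0$ does not exceed the infimum — this is precisely what weak closedness of $F$, weak lower semicontinuity of $\varphi$ and reflexivity of $U$ are there to supply, and the subgradient inequality must be passed through a $\liminf$ rather than an equality; and (ii) that any two candidate minimizers lie inside $B(u_0,3\epsilon)$ before the conditional stability estimate is invoked, which is secured by the a priori bound $m\le c_0\epsilon^p$ from the third assumption.
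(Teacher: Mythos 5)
Your argument is correct under the assumptions as stated, but it does not follow the route the paper has in mind. The paper gives no proof of its own: it delegates existence to the sentence preceding the lemma (weak closedness of $F$, $p$-convexity and weak lower semicontinuity of $\varphi$, reflexivity of $U$ --- exactly your direct-method argument, so that half matches) and cites \cite[Lemma 3.2]{Jin2} for the full statement, explicitly remarking afterwards that \emph{point $(4)$ of Subsection 3.1 is not required} for the lemma. Your uniqueness step, by contrast, leans entirely on point $(4)$: the conditional Lipschitz stability turns $F(u_1)=F(u_2)$ directly into $\mathcal{D}_{\gamma_2}\varphi(u_1,u_2)=0$ and hence $u_1=u_2$ via \autoref{(2.5)}. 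This is logically sound (and you correctly secure $u_1,u_2\in B(u_0,3\epsilon)$ via the a priori bound $m\le c_0\epsilon^p$, and the nonemptiness of $\partial\varphi(u_2)$ from the finiteness of $\varphi$ on $U$), but it proves something much stronger than the lemma: under $(4)$ the solution of \autoref{1.1} in $B(u_0,3\epsilon)$ is unique outright, so the minimization in \autoref{(3.1)} plays no role in selecting it. The proof the paper points to instead uses the tangential cone condition (point $(5)$) to show that the solution set of \autoref{1.1} inside the ball is convex, and then the uniform ($p$-)convexity of $u\mapsto\mathcal{D}_{\gamma_0}\varphi(u,u_0)$ to conclude that the minimizer over that convex set is unique. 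That route is what makes the lemma meaningful in settings where $(4)$ fails or is only an optional extra hypothesis, which is how the paper treats it; your route buys brevity at the cost of invoking the strongest assumption in the list and quietly trivializing the definition of $u^{\dagger}$.
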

Note that the point $(4)$ of Subsection $3.1$ is not required for proving the last lemma. Let us now move on to define our scheme in the following subsection.
\subsection{Novel iteration scheme}
In this subsection, we formulate a variant of the two-point gradient method as discussed in the introduction. In this variant, the $p$-convex  function $\varphi$ induces a penalty term. Let us assume that $u_{-1}^{\delta}=u_0^{\delta}:=u_0\in U$ and $\gamma_{-1}^{\delta}=\gamma_0^{\delta}:=\gamma_0\in\partial\varphi(u_0)$ as the initial guess, and $\tau>1$ be a given number. For $n\geq 0$, define\begin{equation}\label{(3.2)}\begin{split}
\Im_k^{\delta}=\gamma_k^{\delta}+\lambda_k^{\delta}(\gamma_k^{\delta}-\gamma_{k-1}^{\delta}),\hspace{10mm}\\ w_k^{\delta}=\nabla\varphi^*(\Im_k^{\delta}),
\hspace{27mm}\\\gamma_{k+1}^{\delta}=(1-\alpha_k)\Im_k^{\delta}-\upsilon_k^{\delta}L(w_k^{\delta})^*J_s^V(r_k^{\delta})+\alpha_k\Im_0,\hspace{-25mm}\\ u_{k+1}^{\delta}=\nabla^*(\gamma_{k+1}^{\delta}),
\hspace{26mm}
\end{split}
\end{equation}
 with suitably chosen combination parameters $\lambda_k^{\delta}$, $F(w_k^{\delta})-v^{\delta}=r_k^{\delta}$,   $\{\alpha_k\}\in [0,1]$, step sizes  $\upsilon_k^{\delta}$ which will be defined shortly and the duality mapping $J_s^V:V\to V^*$ with the gauge function $t\to t^{s-1}, 1<s<\infty$. Note that due to uniform smoothness of $V$, $J_s^V$ is continuous as well as single-valued.
 
Let us denote $t_k^{\delta}:=\|\Im_k^{\delta}-\Im_0\|$. Observe that both $t_k^{\delta}$ and $r_k^{\delta}$ are available after the second step of $k^{\text{th}}$ iteration of \autoref{(3.2)}.   The step sizes  $\upsilon_k^{\delta}$ considered in \autoref{(3.2)} are required in the third step which means they can be defined in terms of $t_k^{\delta}$ and $r_k^{\delta}$  as follows:
\begin{equation}\label{(3.3)}
\upsilon_k^{\delta}=\begin{cases}\min\left\{
\dfrac{\dfrac{1}{2}\bigg(\vartheta_1^{p^*-1}
\|r_k^{\delta}\|^{s}-\vartheta_{2, k}(t_k^{\delta})^{p^*}\bigg)^{\frac{1}{p^*-1}}}{\|
L(w_k^{\delta})^*J_s^V(r_k^{\delta})\|^{p}}, \vartheta_3\|r_k^{\delta}\|^{p-s} \right\}\ \ \text{if}\ \|r_k^{\delta}\|>\tau\delta\\\hspace{20mm}0 \hspace{72mm}\ \text{if}\ \|r_k^{\delta}\|\leq\tau\delta,
\end{cases}
\end{equation}
where the positive constant $\vartheta_1$ and the  sequence $\{\vartheta_{2, k}\}$ are such that \begin{equation*}
\vartheta_{2, k}(t_k^{\delta})^{p^*}\leq \bar{\vartheta_{2}}^{p^*-1}\|r_k^{\delta}\|^{s}\leq \vartheta_1^{p^*-1}\|r_k^{\delta}\|^{s},
\end{equation*}
and $\bar{\vartheta_{2}}>0$,  $\vartheta_3>0$. In the following remark we discuss how to choose the constant $\vartheta_1$ and the  sequence $\{\vartheta_{2, k}\}$ in \autoref{(3.3)}
\begin{rema}
In order to determine the constant $\vartheta_1$ and an element $\vartheta_{2, k}$ of the sequence $\{\vartheta_{2, k}\}$ for $k^{\text{th}}$ iteration, we use the available values  $t_k^{\delta}$ and $\|r_k^{\delta}\|$.  For an  arbitrary but fixed positive real number $\vartheta_1$ (the involvement of constant $\vartheta_1$ will be clear when we discuss Proposition $3.3$), we take a fixed positive real number $\vartheta_{2, k}$ such that $\vartheta_1^{p^*-1}
\|r_k^{\delta}\|^{s}-\vartheta_{2, k}(t_k^{\delta})^{p^*}>0$. The sequence $\{\vartheta_{2, k}\}$  is essentially required here as in case $\|r_k^{\delta}\|\to 0$, the term $\vartheta_1^{p^*-1}
\|r_k^{\delta}\|^{s}-\varrho (t_k^{\delta})^{p^*}$ becomes negative after certain stage for any positive constant $\varrho$. This will make the step size negative which is not the case. \end{rema}
 Clearly, in our method previous two iterations are required at each step. It is worth to mention that the $p$-convex function $\varphi$ in our method can be a general non-smooth penalty function. This feature allows to reconstruct solutions having certain features such as discontinuities and sparsity.

Further,  let $\alpha_k$ in \autoref{(3.2)}  be such that whenever $\upsilon_k^{\delta}, t_k^{\delta} \neq 0$, it satisfies \begin{equation}\label{A}
\alpha_k \leq\min\bigg\{ \vartheta_4\upsilon_k^{\delta}\|F(w_k^{\delta})-v^{\delta}\|^{s-1}(t_k^{\delta})^{-1},\ 2^{\frac{1-p^*}{p^*}}(\vartheta_{2, k}\upsilon_k^{\delta})^{\frac{1}{p^*}}\bigg\},
\end{equation}

for some  positive  constant $\vartheta_4$. If $\upsilon_k^{\delta}= 0$, then $\alpha_k$ can be a arbitrary sequence. Note that the terms $\vartheta_{2, k}$, $\upsilon_k^{\delta}$, $t_k$ $r_k^{\delta}$ are available before the third step of our scheme \autoref{(3.2)}, so they can be utilized to obtain $\alpha_k$.

As usual, we employ the discrepancy principle with respect to $w_k^{\delta}$ in order to properly terminate our novel scheme \autoref{(3.2)}. By employing this principle,  the method would provide a  useful approximate solution to \autoref{1.1}. For $\tau > 1$, we stop the iteration 
after $k_{\delta}$ steps, where the integer $k_{\delta}:=k(\delta, v^{\delta})$   is  such that\begin{equation}\label{(3.4)}
\|F(w^{\delta}_{k_{\delta}})-v^{\delta}\|\leq \tau\delta<\|F(w^{\delta}_{k})-v^{\delta}\|, \ \ 0\leq k<k_{\delta}
\end{equation}
and use $u^{\delta}_{k_{\delta}}$ as the approximate solution.

\subsection{Convergence Analysis} In this subsection, we perform the convergence analysis of our novel scheme \autoref{(3.2)}. In this regard,  let us begin by  recalling an important result from \cite[Proposition 3.6]{Jin2} which would be  employed later on to prove the convergence of the iterates in the presence of exact data. \begin{prop}
Let the assumptions of Subsection $3.1$  hold $($except point $4)$ and $\varphi:U\to(-\infty,\infty]$ be a proper, uniformly convex and lower semi-continuous   function. Let $\{u_k\} \subset B(u_0, 2\epsilon) \cap  D(\varphi)$ and $\{\gamma_k\} \subset U^*$ be such that the following hold\begin{enumerate}[(i)]
\item $\gamma_k\in \partial\varphi(u_k)$ for all $k$.
\item for any solution $\hat{u}$ of $\autoref{1.1}$ in $B(u_0, 2\epsilon) \cap  D(\varphi)$ the sequence $\{\mathcal{D}_{\gamma_k}\varphi(\hat{u}, u_k)\}$ is monotonically decreasing.
\item $\lim_{k\to\infty}\|F(u_k)-v\|=0$.
\item there is a subsequence $\{k_n\}$ with $k_n\to\infty$ such that for any solution $\hat{u}$ of $\autoref{1.1}$ in $B(u_0, 2\epsilon) \cap  D(\varphi)$ there holds $$\lim_{l\to\infty}\sup_{n\geq l}|\langle\gamma_{k_n}-\gamma_{k_l}, u_{k_n}-\hat{u}\rangle|
=0.$$\end{enumerate}
Then there exists a solution $\bar{u}$ of $\autoref{1.1}$ in  $B(u_0, 2\epsilon) \cap  D(\varphi)$ such that
$$\lim_{k\to\infty}\mathcal{D}_{\gamma_k}\varphi(\bar{u}, u_k)=0.$$
\end{prop}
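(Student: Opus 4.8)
\emph{Proof plan.} The plan is to extract a strong limit $\bar u$ of the subsequence $\{u_{k_n}\}$, to verify that $\bar u$ solves \autoref{1.1} and lies in $B(u_0,2\epsilon)\cap D(\varphi)$, and finally to transfer the decay along $\{k_n\}$ to the whole sequence via the monotonicity in (ii). \emph{Step 1 (the subsequence is Cauchy).} Fix a solution $\hat u\in B(u_0,2\epsilon)\cap D(\varphi)$ of \autoref{1.1}, for instance $u^{\dagger}$; by (ii) the sequence $\{\mathcal{D}_{\gamma_k}\varphi(\hat u,u_k)\}$ is nonincreasing and bounded below by $0$, hence convergent. For $l\leq n$ the three point identity \autoref{(2.3)}, applied with $u=\hat u$, $(u_1,\gamma_1)=(u_{k_n},\gamma_{k_n})$ and $(u_2,\gamma_2)=(u_{k_l},\gamma_{k_l})$, gives
\[
\mathcal{D}_{\gamma_{k_l}}\varphi(u_{k_n},u_{k_l})=\mathcal{D}_{\gamma_{k_l}}\varphi(\hat u,u_{k_l})-\mathcal{D}_{\gamma_{k_n}}\varphi(\hat u,u_{k_n})+\langle\gamma_{k_n}-\gamma_{k_l},u_{k_n}-\hat u\rangle .
\]
Letting $l\to\infty$ with $n\geq l$, the difference of Bregman distances tends to $0$ by convergence of $\{\mathcal{D}_{\gamma_k}\varphi(\hat u,u_k)\}$ and the cross term tends to $0$ by (iv), so $\mathcal{D}_{\gamma_{k_l}}\varphi(u_{k_n},u_{k_l})\to0$. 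Since $\varphi$ is uniformly convex, the inequality $\Psi(\|u_{k_n}-u_{k_l}\|)\leq\mathcal{D}_{\gamma_{k_l}}\varphi(u_{k_n},u_{k_l})$ (a standard consequence of \autoref{(2.1)}), together with $\Psi$ being strictly increasing and $\Psi(0)=0$, forces $\|u_{k_n}-u_{k_l}\|\to0$. Completeness of $U$ then yields $u_{k_n}\to\bar u$ for some $\bar u$ in the closed ball $B(u_0,2\epsilon)$.

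\emph{Step 2 ($\bar u$ is an admissible solution).} From (iii), $F(u_{k_n})\to v$; since $u_{k_n}\to\bar u$ strongly, hence weakly, weak closedness of $F$ gives $\bar u\in D(F)$ and $F(\bar u)=v$. To see $\bar u\in D(\varphi)$, drop the nonpositive middle term in the identity of Step~1 to obtain $\sup_{n\geq l}\mathcal{D}_{\gamma_{k_l}}\varphi(u_{k_n},u_{k_l})<\infty$ for each fixed $l$; expanding this Bregman distance by its definition \autoref{(2.2)} bounds $\{\varphi(u_{k_n})\}_n$ from above, and lower semicontinuity of $\varphi$ then gives $\varphi(\bar u)\leq\liminf_n\varphi(u_{k_n})<\infty$. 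Hence $\bar u$ is a solution of \autoref{1.1} in $B(u_0,2\epsilon)\cap D(\varphi)$.

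\emph{Step 3 (the Bregman distance tends to $0$).} Apply (ii) with $\hat u=\bar u$: the full sequence $\{\mathcal{D}_{\gamma_k}\varphi(\bar u,u_k)\}$ is nonincreasing and nonnegative, so it converges to its infimum, which coincides with $\lim_n\mathcal{D}_{\gamma_{k_n}}\varphi(\bar u,u_{k_n})$; it therefore suffices to show this subsequential limit is $0$. By \autoref{(2.2)}, $\mathcal{D}_{\gamma_{k_n}}\varphi(\bar u,u_{k_n})=\varphi(\bar u)-\varphi(u_{k_n})-\langle\gamma_{k_n},\bar u-u_{k_n}\rangle$, and I estimate the last inner product by writing it as $-\langle\gamma_{k_n}-\gamma_{k_l},u_{k_n}-\bar u\rangle+\langle\gamma_{k_l},\bar u-u_{k_n}\rangle$: the first summand is controlled by (iv) with $\hat u=\bar u$, while the second tends to $0$ as $n\to\infty$ for each fixed $l$ since $u_{k_n}\to\bar u$, so sending $l\to\infty$ afterwards gives $\langle\gamma_{k_n},\bar u-u_{k_n}\rangle\to0$. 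Consequently $\limsup_n\mathcal{D}_{\gamma_{k_n}}\varphi(\bar u,u_{k_n})=\varphi(\bar u)-\liminf_n\varphi(u_{k_n})\leq0$ by lower semicontinuity, and since Bregman distances are nonnegative the limit is exactly $0$, which completes the argument.

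\emph{Main obstacle.} The crux is Step~1: one has to tame the cross term $\langle\gamma_{k_n}-\gamma_{k_l},u_{k_n}-\hat u\rangle$ — which is precisely the role of the somewhat technical hypothesis (iv) — and then convert the resulting decay of the Bregman distance into decay of $\|u_{k_n}-u_{k_l}\|$ through uniform convexity. A secondary subtlety is the verification that $\bar u\in D(\varphi)$ in Step~2, which needs care because $\varphi$ is assumed only lower semicontinuous, not continuous.
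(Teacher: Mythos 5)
Your proof is correct. Note that the paper does not prove this proposition at all: it is imported verbatim as \cite[Proposition 3.6]{Jin2}, so there is no in-paper argument to compare against. Your three steps (Cauchyness of $\{u_{k_n}\}$ via the three-point identity \autoref{(2.3)}, hypotheses (ii) and (iv), and the lower bound $\mathcal{D}_{\gamma}\varphi(\tilde u,u)\geq\Psi(\|\tilde u-u\|)$ from uniform convexity; identification of the limit $\bar u$ as a solution in $B(u_0,2\epsilon)\cap D(\varphi)$ via (iii), weak closedness and lower semicontinuity; and the upgrade from the subsequence to the full sequence via the monotonicity in (ii)) reproduce exactly the standard argument behind the cited result, so you have in effect supplied the missing proof rather than deviated from it.
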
 Now, in order to study the convergence analysis, first we show   the monotonocity of the Bregman distance $\mathcal{D}_{\gamma_k^{\delta}}\varphi(\hat{u}, u_k^{\delta})$ with respect to $k$ for $0\leq k\leq k_{\delta}$, where $\hat{u}$ is any solution of \autoref{1.1} in $B(u_0, 2\epsilon)\cap D(\varphi)$. In this regard, let us first obtain the estimates for $\mathcal{D}_{\Im_k^{\delta}}\varphi(\hat{u}, w_k^{\delta})-\mathcal{D}_{\gamma_k^{\delta}}\varphi(\hat{u}, u_k^{\delta})$ and  $\mathcal{D}_{\gamma_{k+1}^{\delta}}\varphi(\hat{u}, u_{k+1}^{\delta})-\mathcal{D}_{\Im_k^{\delta}}\varphi(\hat{u}, w_k^{\delta})$ in the following proposition under certain assumptions.\begin{prop}
Let $V$ be uniformly smooth, $U$ be reflexive and assumptions of Subsection $3.1$ hold. Then, for any solution $\hat{u}\in B(u_0, 2\epsilon)\cap D(\varphi)$ of $\autoref{1.1}$, we have
\begin{equation}\label{(3.5)}\begin{split}
\mathcal{D}_{\Im_k^{\delta}}\varphi(\hat{u}, w_k^{\delta})-\mathcal{D}_{\gamma_{k}^{\delta}}\varphi(\hat{u}, u_{k}^{\delta})\hspace{80mm}\\ \leq 
\lambda_k^{\delta}\Theta_k+\frac{\lambda_k^{\delta}}{p^*(2c_0)^{p^*-1}} \|\gamma_{k}^{\delta}-\gamma_{k-1}^{\delta}\|^{p^*}+\frac{(\lambda_{k}^{\delta})^{p^*}}{p^*(2c_0)^{p^*-1}} \|\gamma_{k}^{\delta}-\gamma_{k-1}^{\delta}\|^{p^*}. \end{split}
\end{equation}
Further if $w_k^{\delta}\in B(u_0, 3\epsilon)$ then 
\begin{equation}\label{(3.6)}\begin{split}
\mathcal{D}_{\gamma_{k+1}^{\delta}}\varphi(\hat{u}, u_{k+1}^{\delta})-\mathcal{D}_{\Im_k^{\delta}}\varphi(\hat{u}, w_k^{\delta})\hspace{80mm} \\ \leq 
 \bigg[\bigg(\frac{\mathcal{C}}{c_0}\bigg)^{\frac{1}{p}}\vartheta_4
+(1+\eta)\bigg]\upsilon_k^{\delta}\|F(w_k^{\delta})-v^{\delta}\|^{s-1}\delta\hspace{50mm}\\-\bigg[1-\bigg(\frac{\mathcal{C}}{c_0}\bigg)^{\frac{1}{p}}\vartheta_4-\eta-\frac{1}{p^*(2c_0)^{p^*-1}} \big(\vartheta_1^{p^*-1}\big)\bigg]\upsilon_k^{\delta}\|F(w_k^{\delta})-v^{\delta}\|^{s},\end{split}
\end{equation}
where \begin{equation}\label{(3.7)}
\Theta_k:=\mathcal{D}_{\gamma_k^{\delta}}\varphi(\hat{u}, u_k^{\delta})-\mathcal{D}_{\gamma_{k-1}^{\delta}}\varphi(\hat{u}, u_{k-1}^{\delta}).
\end{equation}
\end{prop}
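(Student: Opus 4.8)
The plan is to prove the two estimates separately, each by an application of the three-point identity \autoref{(2.3)} followed by the inequality \autoref{(2.9)} and the structural assumptions of Subsection~3.1.

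For \autoref{(3.5)}, I would start from the three-point identity with the triple $(u_1,u_2,u)=(u_k^\delta, w_k^\delta, \hat u)$ and the subgradients $\gamma_k^\delta\in\partial\varphi(u_k^\delta)$, $\Im_k^\delta\in\partial\varphi(w_k^\delta)$ (the latter holding by the second line of \autoref{(3.2)} together with \autoref{(2.7)}). This gives
\[
\mathcal{D}_{\Im_k^\delta}\varphi(\hat u, w_k^\delta)-\mathcal{D}_{\gamma_k^\delta}\varphi(\hat u, u_k^\delta)
=\mathcal{D}_{\Im_k^\delta}\varphi(u_k^\delta, w_k^\delta)+\langle \Im_k^\delta-\gamma_k^\delta, u_k^\delta-\hat u\rangle .
\]
Now substitute $\Im_k^\delta-\gamma_k^\delta=\lambda_k^\delta(\gamma_k^\delta-\gamma_{k-1}^\delta)$ from the first line of \autoref{(3.2)}. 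The inner-product term becomes $\lambda_k^\delta\langle\gamma_k^\delta-\gamma_{k-1}^\delta, u_k^\delta-\hat u\rangle$, and I would rewrite this, again via \autoref{(2.3)} applied to the triple $(u_{k-1}^\delta, u_k^\delta, \hat u)$, as $\lambda_k^\delta\big(\Theta_k - \mathcal{D}_{\gamma_k^\delta}\varphi(u_{k-1}^\delta,u_k^\delta)\big)\le \lambda_k^\delta\Theta_k$, using nonnegativity of the Bregman distance and the definition \autoref{(3.7)} of $\Theta_k$. For the remaining term $\mathcal{D}_{\Im_k^\delta}\varphi(u_k^\delta,w_k^\delta)$, I would apply \autoref{(2.9)} with $\gamma=\Im_k^\delta$, $\tilde\gamma=\gamma_k^\delta$ to bound it by $\frac{1}{p^*(2c_0)^{p^*-1}}\|\Im_k^\delta-\gamma_k^\delta\|^{p^*}=\frac{(\lambda_k^\delta)^{p^*}}{p^*(2c_0)^{p^*-1}}\|\gamma_k^\delta-\gamma_{k-1}^\delta\|^{p^*}$. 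Collecting terms yields \autoref{(3.5)}; the extra $\lambda_k^\delta$-term there (as opposed to $(\lambda_k^\delta)^{p^*}$) is harmless since one can always bound $\mathcal D_{\gamma_k^\delta}\varphi(u_{k-1}^\delta,u_k^\delta)$ by the same \autoref{(2.9)}-type expression if needed, but in fact the displayed right-hand side already dominates what we derived.

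For \autoref{(3.6)}, I again invoke \autoref{(2.3)} with the triple $(w_k^\delta, u_{k+1}^\delta, \hat u)$, $\Im_k^\delta\in\partial\varphi(w_k^\delta)$, $\gamma_{k+1}^\delta\in\partial\varphi(u_{k+1}^\delta)$:
\[
\mathcal{D}_{\gamma_{k+1}^\delta}\varphi(\hat u, u_{k+1}^\delta)-\mathcal{D}_{\Im_k^\delta}\varphi(\hat u, w_k^\delta)
=\mathcal{D}_{\gamma_{k+1}^\delta}\varphi(w_k^\delta, u_{k+1}^\delta)+\langle \gamma_{k+1}^\delta-\Im_k^\delta, w_k^\delta-\hat u\rangle .
\]
From the third line of \autoref{(3.2)}, $\gamma_{k+1}^\delta-\Im_k^\delta=-\upsilon_k^\delta L(w_k^\delta)^*J_s^V(r_k^\delta)+\alpha_k(\Im_0-\Im_k^\delta)$. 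The main term $-\upsilon_k^\delta\langle L(w_k^\delta)^*J_s^V(r_k^\delta), w_k^\delta-\hat u\rangle=-\upsilon_k^\delta\langle J_s^V(r_k^\delta), L(w_k^\delta)(w_k^\delta-\hat u)\rangle$ is the standard Landweber term: using the tangential cone condition (assumption~5) to replace $L(w_k^\delta)(w_k^\delta-\hat u)$ by $F(w_k^\delta)-F(\hat u)=r_k^\delta-(v^\delta-v)$ up to an error of norm $\le\eta\|r_k^\delta\|$, together with $\langle J_s^V(r_k^\delta),r_k^\delta\rangle=\|r_k^\delta\|^s$ and $\|J_s^V(r_k^\delta)\|=\|r_k^\delta\|^{s-1}$ and $\|v^\delta-v\|\le\delta$, this produces a term $\le -(1-\eta)\upsilon_k^\delta\|r_k^\delta\|^s+(1+\eta)\upsilon_k^\delta\|r_k^\delta\|^{s-1}\delta$. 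The extrapolation/regularization term $\alpha_k\langle\Im_0-\Im_k^\delta, w_k^\delta-\hat u\rangle$ I would bound using $\|\Im_0-\Im_k^\delta\|=t_k^\delta$, the bound on $\|w_k^\delta-\hat u\|$ coming from the stability assumption~4 (which gives $\mathcal D_{\Im_k^\delta}\varphi(\hat u,w_k^\delta)\le\mathcal C\|F(w_k^\delta)-v\|^p$, hence by $p$-convexity \autoref{(2.5)} a bound on $\|w_k^\delta-\hat u\|^p$ in terms of $\|r_k^\delta\|$ and $\delta$), and then the constraint \autoref{A} on $\alpha_k$, namely $\alpha_k t_k^\delta\le\vartheta_4\upsilon_k^\delta\|r_k^\delta\|^{s-1}$, to convert this into a term of the form $(\mathcal C/c_0)^{1/p}\vartheta_4\,\upsilon_k^\delta\|r_k^\delta\|^{s-1}(\|r_k^\delta\|+\delta)$. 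Finally $\mathcal{D}_{\gamma_{k+1}^\delta}\varphi(w_k^\delta,u_{k+1}^\delta)$ is bounded via \autoref{(2.9)} by $\frac{1}{p^*(2c_0)^{p^*-1}}\|\gamma_{k+1}^\delta-\Im_k^\delta\|^{p^*}$; expanding $\|\gamma_{k+1}^\delta-\Im_k^\delta\|^{p^*}\le 2^{p^*-1}\big(\upsilon_k^\delta\|L(w_k^\delta)^*J_s^V(r_k^\delta)\|\big)^{p^*}+2^{p^*-1}\alpha_k^{p^*}(t_k^\delta)^{p^*}$ and using the two defining bounds on the step size \autoref{(3.3)} (the first of which is precisely engineered so that $2^{p^*-1}(\upsilon_k^\delta\|L(w_k^\delta)^*J_s^V(r_k^\delta)\|^p)^{p^*/?}$… more precisely, $\upsilon_k^\delta\le\tfrac12(\vartheta_1^{p^*-1}\|r_k^\delta\|^s-\vartheta_{2,k}(t_k^\delta)^{p^*})^{1/(p^*-1)}\|L(w_k^\delta)^*J_s^V(r_k^\delta)\|^{-p}$) plus the bound \autoref{A} on $\alpha_k$ (namely $\alpha_k^{p^*}\le 2^{1-p^*}\vartheta_{2,k}\upsilon_k^\delta$), one checks that this whole contribution is $\le\frac{1}{p^*(2c_0)^{p^*-1}}\vartheta_1^{p^*-1}\,\upsilon_k^\delta\|r_k^\delta\|^s$. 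Adding the three pieces and grouping the $\upsilon_k^\delta\|r_k^\delta\|^{s-1}\delta$ terms against the $\upsilon_k^\delta\|r_k^\delta\|^s$ terms gives exactly \autoref{(3.6)}.

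The routine parts are the three-point identity manipulations and the triangle/Hölder estimates. The delicate point — and the one I would be most careful about — is the bookkeeping in the last step: verifying that the two clauses in the definition \autoref{(3.3)} of $\upsilon_k^\delta$ and the two clauses in the bound \autoref{A} on $\alpha_k$ have been chosen precisely so that $2^{p^*-1}(\upsilon_k^\delta)^{p^*}\|L(w_k^\delta)^*J_s^V(r_k^\delta)\|^{p\cdot p^*/(p^*-1)}$-type terms and $2^{p^*-1}\alpha_k^{p^*}(t_k^\delta)^{p^*}$ collapse to the single clean term $\vartheta_1^{p^*-1}\upsilon_k^\delta\|r_k^\delta\|^s$; here one must use $p\cdot p^*/(p^*-1)=p\cdot p = $ … i.e. the relation $\frac1{p^*}+\frac1p=1$ so that $p(p^*-1)=p^*$ and $\|L(w_k^\delta)^*J_s^V(r_k^\delta)\|^{p^*}$ pairs correctly against the $\|L(w_k^\delta)^*J_s^V(r_k^\delta)\|^{-p}$ inside $\upsilon_k^\delta$. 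Also one must not forget to check $w_k^\delta\in B(u_0,3\epsilon)$ is exactly the extra hypothesis of the second estimate, so that assumptions~4 and~5 apply at $w_k^\delta$; the case $\|r_k^\delta\|\le\tau\delta$ (where $\upsilon_k^\delta=0$) makes \autoref{(3.6)} trivial since the right-hand side is then nonpositive-or-zero, so throughout one may assume $\|r_k^\delta\|>\tau\delta$ and in particular $\upsilon_k^\delta>0$.
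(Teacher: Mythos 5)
Your overall strategy coincides with the paper's: two applications of the three-point identity \autoref{(2.3)}, each followed by the bound \autoref{(2.9)} on the leftover Bregman distance, with the tangential cone condition, the stability assumption and the constraints \autoref{(3.3)}, \autoref{A} feeding into the second estimate. Your outline of \autoref{(3.6)} is sound and matches the paper step for step, including the exponent bookkeeping $p(p^*-1)=p^*$ and the role of \autoref{A} in making the $(t_k^{\delta})^{p^*}$ contribution nonpositive.

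There is, however, a genuine sign error in your derivation of \autoref{(3.5)}. Applying \autoref{(2.3)} with $u_1=u_k^{\delta}$, $u_2=u_{k-1}^{\delta}$, $u=\hat{u}$ gives
\[
\langle\gamma_k^{\delta}-\gamma_{k-1}^{\delta},\,u_k^{\delta}-\hat{u}\rangle=\Theta_k+\mathcal{D}_{\gamma_{k-1}^{\delta}}\varphi(u_k^{\delta},u_{k-1}^{\delta}),
\]
with a \emph{plus} sign in front of the nonnegative Bregman distance, so this inner product is $\geq\Theta_k$ and the correction term cannot be discarded ``by nonnegativity.'' The version you wrote, $\Theta_k-\mathcal{D}_{\gamma_k^{\delta}}\varphi(u_{k-1}^{\delta},u_k^{\delta})$, is what \autoref{(2.3)} yields for the pairing against $u_{k-1}^{\delta}-\hat{u}$, not against $u_k^{\delta}-\hat{u}$, which is the vector you actually have. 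As written, your argument would establish the strictly stronger inequality in which the middle term $\frac{\lambda_k^{\delta}}{p^*(2c_0)^{p^*-1}}\|\gamma_k^{\delta}-\gamma_{k-1}^{\delta}\|^{p^*}$ of \autoref{(3.5)} is absent, and that inequality is false in general; your closing remark that the displayed right-hand side ``already dominates what we derived'' conceals the error rather than repairing it. The correct step is to retain $\lambda_k^{\delta}\mathcal{D}_{\gamma_{k-1}^{\delta}}\varphi(u_k^{\delta},u_{k-1}^{\delta})$ and bound it from above via \autoref{(2.9)}; that is exactly where the middle term of \autoref{(3.5)} comes from. A smaller caveat: when $\|r_k^{\delta}\|\leq\tau\delta$ one has $\upsilon_k^{\delta}=0$, but $\alpha_k$ is then unconstrained by \autoref{A}, so the left-hand side of \autoref{(3.6)} is not obviously nonpositive and the case is not quite ``trivial'' as you claim (the paper glosses over this point as well, so I only flag it).
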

\begin{proof}
To derive \autoref{(3.5)}, let us first obtain an estimate for $\langle \Im_k^{\delta}-\gamma_k^{\delta}, u_k^{\delta}-\hat{u}\rangle$. By using the definition of $\Im_k^{\delta}$, three point identity \autoref{(2.3)}, \autoref{(3.7)} and \autoref{(2.9)}, we have $$\langle\Im_k^{\delta}-\gamma_k^{\delta}, u_k^{\delta}-\hat{u}\rangle=\lambda_k^{\delta}\langle \gamma_k^{\delta}-\gamma_{k-1}^{\delta}, u_k^{\delta}-\hat{u}\rangle\hspace{90mm}$$ $$=\lambda_k^{\delta}\big(\mathcal{D}_{\gamma_k^{\delta}}\varphi(\hat{u}, u_k^{\delta})-\mathcal{D}_{\gamma_{k-1}^{\delta}}\varphi(\hat{u}, u_{k-1}^{\delta})+\mathcal{D}_{\gamma_{k-1}^{\delta}}\varphi(u_k^{\delta}, u_{k-1}^{\delta})\big)$$ $$=\lambda_k^{\delta}\Theta_k +\lambda_k^{\delta}\mathcal{D}_{\gamma_{k-1}^{\delta}}\varphi(u_k^{\delta}, u_{k-1}^{\delta})\hspace{48mm}$$ \begin{equation}\label{(3.8)}
\leq \lambda_k^{\delta}\Theta_k+\frac{\lambda_k^{\delta}}{p^*(2c_0)^{p^*-1}} \|\gamma_{k}^{\delta}-\gamma_{k-1}^{\delta}\|^{p^*}.\hspace{35mm}
\end{equation}
Again use the three point identity \autoref{(2.3)}, \autoref{(2.9)} and definition of $\Im_k^{\delta}$ to obtain
$$\mathcal{D}_{\Im_k^{\delta}}\varphi(\hat{u}, w_k^{\delta})-\mathcal{D}_{\gamma_{k}^{\delta}}\varphi(\hat{u}, u_{k}^{\delta})=\langle\Im_k^{\delta}-\gamma_k^{\delta}, u_k^{\delta}-\hat{u}\rangle+\mathcal{D}_{\Im_{k}^{\delta}}\varphi(u_k^{\delta}, w_{k}^{\delta})\hspace{30mm}$$ $$\leq  \langle\Im_k^{\delta}-\gamma_k^{\delta}, u_k^{\delta}-\hat{u}\rangle+\frac{1}{p^*(2c_0)^{p^*-1}} \|\Im_{k}^{\delta}-\gamma_{k}^{\delta}\|^{p^*}\hspace{-40mm}$$ $$=  \langle\Im_k^{\delta}-\gamma_k^{\delta}, u_k^{\delta}-\hat{u}\rangle+\frac{(\lambda_{k}^{\delta})^{p^*}}{p^*(2c_0)^{p^*-1}} \|\gamma_{k}^{\delta}-\gamma_{k-1}^{\delta}\|^{p^*}.\hspace{-44mm}$$
Plugging the estimate \autoref{(3.8)} in above inequality to obtain $$\mathcal{D}_{\Im_k^{\delta}}\varphi(\hat{u}, w_k^{\delta})-\mathcal{D}_{\gamma_{k}^{\delta}}\varphi(\hat{u}, u_{k}^{\delta})\leq 
\lambda_k^{\delta}\Theta_k+\frac{\lambda_k^{\delta}}{p^*(2c_0)^{p^*-1}} \|\gamma_{k}^{\delta}-\gamma_{k-1}^{\delta}\|^{p^*}+\frac{(\lambda_{k}^{\delta})^{p^*}}{p^*(2c_0)^{p^*-1}} \|\gamma_{k}^{\delta}-\gamma_{k-1}^{\delta}\|^{p^*},$$ which is the required estimate \autoref{(3.5)}. Now, we prove the second part. For this, we start with the definition of $\gamma_{k+1}^{\delta}$ in \autoref{(3.2)}, according to which $$\|\gamma_{k+1}^{\delta}-\Im_k^{\delta}\|^{p^*}=\|\alpha_k\Im_k^{\delta}+\upsilon_k^{\delta}L(w_k^{\delta})^*J_s^V(r_k^{\delta})-\alpha_k\Im_0\|^{p^*}\hspace{50mm}$$ \begin{equation}\label{(3.9)}
\hspace{4mm} \leq 2^{p^*-1}\big((\upsilon_k^{\delta})^{p^*}\|L(w_k^{\delta})^*J_s^V(r_k^{\delta})\|^{p^*}+\alpha_k^{p^*}\|\Im_k^{\delta}-\Im_0\|^{p^*}\big).\end{equation}
Further, from the definition of step size in \autoref{(3.3)}, we have $$(\upsilon_k^{\delta})^{p^*-1}\leq\dfrac{\dfrac{1}{2^{p^*-1}}\big(\vartheta_1^{p^*-1}
\|r_k^{\delta}\|^{s}-\vartheta_{2,k}(t_k^{\delta})^{p^*}\big)}{\|
L(w_k^{\delta})^*J_s^V(r_k^{\delta})\|^{p(p^*-1)}}$$
which means \begin{equation}\label{(3.10)}
(\upsilon_k^{\delta})^{p^*-1}\|
L(w_k^{\delta})^*J_s^V(r_k^{\delta})\|^{p^*}\leq \dfrac{1}{2^{p^*-1}}\big(\vartheta_1^{p^*-1}
\|r_k^{\delta}\|^{s}-\vartheta_{2,k}(t_k^{\delta})^{p^*}\big).
\end{equation}
Substituting \autoref{(3.10)} in \autoref{(3.9)} to reach at\begin{equation}
 \label{(3.11)} \|\gamma_{k+1}^{\delta}-\Im_k^{\delta}\|^{p^*}\leq \upsilon_k^{\delta}\vartheta_1^{p^*-1}
\|r_k^{\delta}\|^{s}+\big(2^{p^*-1}\alpha_k^{p^*}- \vartheta_{2,k}\upsilon_k^{\delta}\big)(t_k^{\delta})^{p^*}.\end{equation}
To deduce the estimate \autoref{(3.6)}, three point identity \autoref{(2.3)} and \autoref{(2.9)} imply that $$\mathcal{D}_{\gamma_{k+1}^{\delta}}\varphi(\hat{u}, u_{k+1}^{\delta})-\mathcal{D}_{\Im_k^{\delta}}\varphi(\hat{u}, w_k^{\delta})=\mathcal{D}_{\gamma_{k+1}^{\delta}}\varphi(w_k^{\delta}, u_{k+1}^{\delta})+\langle\gamma_{k+1}^{\delta}-\Im_k^{\delta}, w_k^{\delta}-\hat{u}\rangle\hspace{14mm}$$\begin{equation}\label{(3.12)}
 \hspace{48mm}\leq \frac{1}{p^*(2c_0)^{p^*-1}} \|\gamma_{k+1}^{\delta}-\Im_{k}^{\delta}\|^{p^*}+\langle\gamma_{k+1}^{\delta}-\Im_k^{\delta}, w_k^{\delta}-\hat{u}\rangle.\end{equation}
Estimate for the first term of right side of inequality \autoref{(3.12)} has been already  deduced in \autoref{(3.11)}. So, let us deduce an estimate for the second term. For this, we use the definition of $\gamma_{k+1}^{\delta}$,  definition of duality mapping, \autoref{1.2} and \autoref{(3.3)} to derive that $$\langle\gamma_{k+1}^{\delta}-\Im_k^{\delta}, w_k^{\delta}-\hat{u}\rangle\hspace{110mm}$$ $$=-\langle \alpha_k\Im_k^{\delta}+\upsilon_k^{\delta}L(w_k^{\delta})^*J_s^V(F(w_k^{\delta})-v^{\delta})-\alpha_k\Im_0,w_k^{\delta}-\hat{u}\rangle\hspace{28mm} $$ $$=-\langle \alpha_k(\Im_k^{\delta}-\Im_0),w_k^{\delta}-\hat{u}\rangle  -\upsilon_k^{\delta}\langle J_s^V(F(w_k^{\delta})-v^{\delta}),L(w_k^{\delta})(w_k^{\delta}-\hat{u})\rangle\hspace{10mm} $$  $$=-\langle \alpha_k(\Im_k^{\delta}-\Im_0),w_k^{\delta}-\hat{u}\rangle  -\upsilon_k^{\delta}\langle J_s^V(F(w_k^{\delta})-v^{\delta}),v^{\delta}-
F(w_k^{\delta})-L(w_k^{\delta})(\hat{u}-w_k^{\delta})\rangle\hspace{-16mm} $$ $$-\upsilon_k^{\delta}\langle J_s^V(F(w_k^{\delta})-v^{\delta}),F(w_k^{\delta})-v^{\delta}\rangle\hspace{40mm}$$ $$\leq \alpha_kt_k^{\delta}\|w_k^{\delta}-\hat{u}\|+\upsilon_k^{\delta}\|F(w_k^{\delta})-v^{\delta}\|^{s-1}\big(\delta+\|v-
F(w_k^{\delta})-L(w_k^{\delta})(\hat{u}-w_k^{\delta})\|\big)\hspace{-8mm}$$  $$-\upsilon_k^{\delta} \|F(w_k^{\delta})-v^{\delta}\|^s.\hspace{65mm}$$Apply
points $(4)$, $(5)$ of assumptions in Subsection $3.1$ (as $w_k^{\delta}\in B(u_0, 3\epsilon)$ by assumption and $\hat{u}\in B(u_0, 3\epsilon)$ due to point $3$ of assumptions in Subsection $3.1$ and \autoref{(2.5)}) after applying \autoref{(2.5)}, and then \autoref{A}  in the last inequality to reach at
 $$\langle\gamma_{k+1}^{\delta}-\Im_k^{\delta}, w_k^{\delta}-\hat{u}\rangle\hspace{120mm}$$ $$\leq \alpha_kt_k^{\delta}\bigg(\frac{\mathcal{C}}{c_0}\bigg)^{\frac{1}{p}}\|F(w_k^{\delta})-F(\hat{u})\|+\upsilon_k^{\delta}\|F(w_k^{\delta})-v^{\delta}\|^{s-1}\big(\delta+\eta\|
F(w_k^{\delta})-v\|\big)\hspace{5mm}$$  $$-\upsilon_k^{\delta} \|F(w_k^{\delta})-v^{\delta}\|^s\hspace{75mm}$$
$$\leq \bigg(\frac{\mathcal{C}}{c_0}\bigg)^{\frac{1}{p}}\vartheta_4\upsilon_k^{\delta}\|F(w_k^{\delta})-v^{\delta}\|^{s-1}\big(\|F(w_k^{\delta})-v^{\delta}\|+\delta\big)+\upsilon_k^{\delta}\|F(w_k^{\delta})-v^{\delta}\|^{s-1}\big(\delta(1+\eta)\hspace{-13mm}$$ $$+\eta\|
F(w_k^{\delta})-v^{\delta}\|\big)  -\upsilon_k^{\delta} \|F(w_k^{\delta})-v^{\delta}\|^s\hspace{30mm}$$ $$= \bigg[\bigg(\frac{\mathcal{C}}{c_0}\bigg)^{\frac{1}{p}}\vartheta_4
+(1+\eta)\bigg]\upsilon_k^{\delta}\|F(w_k^{\delta})-v^{\delta}\|^{s-1}\delta\hspace{50mm}$$ $$-\bigg[1-\bigg(\frac{\mathcal{C}}{c_0}\bigg)^{\frac{1}{p}}\vartheta_4-\eta\bigg]\upsilon_k^{\delta}\|F(w_k^{\delta})-v^{\delta}\|^{s}.$$
Substituting this and \autoref{(3.11)} in \autoref{(3.12)} to arrive at  $$\mathcal{D}_{\gamma_{k+1}^{\delta}}\varphi(\hat{u}, u_{k+1}^{\delta})-\mathcal{D}_{\Im_k^{\delta}}\varphi(\hat{u}, w_k^{\delta})\hspace{100mm}$$ $$ \leq \frac{1}{p^*(2c_0)^{p^*-1}} \big(\upsilon_k^{\delta}\vartheta_1^{p^*-1}
\|F(w_k^{\delta})-v^{\delta}\|^{s}+\big(2^{p^*-1}\alpha_k^{p^*}-\vartheta_{2,k}\upsilon_k^{\delta}\big)(t_k^{\delta})^{p^*}\big)\hspace{34mm}$$ $$+ \bigg[\bigg(\frac{\mathcal{C}}{c_0}\bigg)^{\frac{1}{p}}\vartheta_4
+(1+\eta)\bigg]\upsilon_k^{\delta}\|F(w_k^{\delta})-v^{\delta}\|^{s-1}\delta-\bigg[1-\bigg(\frac{\mathcal{C}}{c_0}\bigg)^{\frac{1}{p}}\vartheta_4-\eta\bigg]\upsilon_k^{\delta}\|F(w_k^{\delta})-v^{\delta}\|^{s}.$$
This estimate with the choice of $\alpha_k$ in \autoref{A} is the estimate \autoref{(3.6)}.
\end{proof}
Till now, we have only obtained the estimates for any arbitrary $k$ in Proposition $3.2$. Let us now
 employ the discrepany principle in the results of Proposition $3.2$. For that, observe that from the definition of $\vartheta_k^{\delta}$,  \autoref{(3.3)} and \autoref{(3.4)}, we have $\vartheta_k^{\delta}\tau\delta\leq \vartheta_k^{\delta}\|F(w_k^{\delta})-v^{\delta}\|$. Plugging this in \autoref{(3.6)} to obtain\begin{equation}\label{(3.13)}\begin{split}
\mathcal{D}_{\gamma_{k+1}^{\delta}}\varphi(\hat{u}, u_{k+1}^{\delta})-\mathcal{D}_{\Im_k^{\delta}}\varphi(\hat{u}, w_k^{\delta})\leq -\vartheta_5\upsilon_k^{\delta}\|F(w_k^{\delta})-v^{\delta}\|^{s},\end{split}
\end{equation}
where\begin{equation}\label{(3.14)}
 \vartheta_5=1-\bigg(\frac{\mathcal{C}}{c_0}\bigg)^{\frac{1}{p}}\vartheta_4-\eta-\frac{\vartheta_1^{p^*-1}}{p^*(2c_0)^{p^*-1}} -\frac{\big(\frac{\mathcal{C}}{c_0}\big)^{\frac{1}{p}}\vartheta_4
+(1+\eta)}{\tau}.\end{equation}
We choose the constants $\tau$ (sufficiently large), and $\vartheta_1, \vartheta_2, \eta$ (all three sufficiently small) such that $\vartheta_5>0$ (cf. Remark 3.2). Now, with the definition of $\Theta_k$ in \autoref{(3.7)},  \autoref{(3.13)}   and \autoref{(3.5)}, we have
$$\Theta_{k+1}=\mathcal{D}_{\gamma_{k+1}^{\delta}}\varphi(\hat{u}, u_{k+1}^{\delta})-\mathcal{D}_{\gamma_{k}^{\delta}}\varphi(\hat{u}, u_{k}^{\delta})\hspace{69mm}$$ $$\leq\mathcal{D}_{\Im_k^{\delta}}\varphi(\hat{u}, w_k^{\delta})-\mathcal{D}_{\gamma_{k}^{\delta}}\varphi(\hat{u}, u_{k}^{\delta})-\vartheta_5\upsilon_k^{\delta}\|F(w_k^{\delta})-v^{\delta}\|^{s}\hspace{24mm}$$
$$\leq \lambda_k^{\delta}\Theta_k+\frac{\lambda_k^{\delta}+(\lambda_{k}^{\delta})^{p^*}}{p^*(2c_0)^{p^*-1}} \|\gamma_{k}^{\delta}-\gamma_{k-1}^{\delta}\|^{p^*}-\vartheta_5\upsilon_k^{\delta}\|F(w_k^{\delta})-v^{\delta}\|^{s}.\hspace{12mm}$$
So, we have proved the following:\begin{prop}
Let $V$ be uniformly smooth, $U$ be reflexive and assumptions of Subsection $3.1$ be satisfied. Further, let $\vartheta_5$ in $\autoref{(3.14)}$ be a positive constant and $w_k^{\delta}\in B(u_0, 3\epsilon)$. Then, for any solution $\hat{u}\in B(u_0, 2\epsilon)\cap D(\varphi)$ of $\autoref{1.1}$, we have \begin{equation}\label{(3.15)}
\Theta_{k+1}\leq \lambda_k^{\delta}\Theta_k+\frac{\lambda_k^{\delta}+(\lambda_{k}^{\delta})^{p^*}}{p^*(2c_0)^{p^*-1}} \|\gamma_{k}^{\delta}-\gamma_{k-1}^{\delta}\|^{p^*}-\vartheta_5\upsilon_k^{\delta}\|F(w_k^{\delta})-v^{\delta}\|^{s}.
\end{equation}
\end{prop}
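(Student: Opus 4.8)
The plan is to read off \autoref{(3.15)} by decomposing the one-step increment $\Theta_{k+1}$ into the two pieces already estimated in the preceding proposition and then feeding in the discrepancy principle. First I would use the definition \autoref{(3.7)} of $\Theta_{k+1}$ to write
\[
\Theta_{k+1}=\big(\mathcal{D}_{\gamma_{k+1}^{\delta}}\varphi(\hat{u}, u_{k+1}^{\delta})-\mathcal{D}_{\Im_k^{\delta}}\varphi(\hat{u}, w_k^{\delta})\big)+\big(\mathcal{D}_{\Im_k^{\delta}}\varphi(\hat{u}, w_k^{\delta})-\mathcal{D}_{\gamma_{k}^{\delta}}\varphi(\hat{u}, u_{k}^{\delta})\big),
\]
so it suffices to bound each bracket. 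The second bracket is exactly the quantity estimated in \autoref{(3.5)}, which applies because $U$ is reflexive, $V$ is uniformly smooth and the assumptions of Subsection 3.1 hold; this immediately supplies the two $\lambda_k^{\delta}$-terms on the right-hand side of \autoref{(3.15)}.

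For the first bracket I would invoke \autoref{(3.6)}; its extra hypothesis $w_k^{\delta}\in B(u_0,3\epsilon)$ is precisely what is assumed in the statement. The bound \autoref{(3.6)} still carries a noise term proportional to $\upsilon_k^{\delta}\|F(w_k^{\delta})-v^{\delta}\|^{s-1}\delta$, and to absorb it I would restrict to the indices $0\le k<k_{\delta}$ for which the iteration has not yet been stopped, so that the discrepancy principle \autoref{(3.4)} gives $\tau\delta<\|F(w_k^{\delta})-v^{\delta}\|$. Multiplying by $\upsilon_k^{\delta}$ yields $\upsilon_k^{\delta}\|F(w_k^{\delta})-v^{\delta}\|^{s-1}\delta\le\tfrac{1}{\tau}\,\upsilon_k^{\delta}\|F(w_k^{\delta})-v^{\delta}\|^{s}$; inserting this into \autoref{(3.6)} collapses the noise term together with the existing $\|F(w_k^{\delta})-v^{\delta}\|^{s}$ term into a single term $-\vartheta_5\,\upsilon_k^{\delta}\|F(w_k^{\delta})-v^{\delta}\|^{s}$, with $\vartheta_5$ exactly as in \autoref{(3.14)}, i.e.\ the estimate \autoref{(3.13)}.

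Finally I would add \autoref{(3.13)} (the bound for the first bracket) to \autoref{(3.5)} (the bound for the second bracket), which produces \autoref{(3.15)} verbatim, so no further computation is required. The only point that could look like an obstacle, namely that the last term is genuinely a descent term ($\vartheta_5>0$), is not part of this proof: it is precisely the standing hypothesis that $\vartheta_5$ in \autoref{(3.14)} is a positive constant, and it is arranged by taking $\tau$ sufficiently large and $\vartheta_1,\eta$ — together with the bound on $\vartheta_4$ imposed through \autoref{A} — sufficiently small, as noted in the discussion preceding the statement. Thus the argument is pure bookkeeping: split $\Theta_{k+1}$ via \autoref{(3.7)}, apply the previous proposition twice, and substitute the stopping rule.
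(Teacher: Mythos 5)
Your proposal is correct and follows essentially the same route as the paper: split $\Theta_{k+1}$ via \autoref{(3.7)} into the two increments bounded in \autoref{(3.5)} and \autoref{(3.6)}, use the discrepancy principle \autoref{(3.4)} (for $k<k_\delta$, where $\upsilon_k^{\delta}$ may be nonzero) to replace $\upsilon_k^{\delta}\|F(w_k^{\delta})-v^{\delta}\|^{s-1}\delta$ by $\tau^{-1}\upsilon_k^{\delta}\|F(w_k^{\delta})-v^{\delta}\|^{s}$ and so obtain \autoref{(3.13)} with $\vartheta_5$ as in \autoref{(3.14)}, then add the two bounds. No gaps.
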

Let us discuss about the requirement of  constant  $\vartheta_5$ in \autoref{(3.14)} to be positive in the following remark.
\begin{rema}
The constant $\vartheta_5$ in \autoref{(3.14)} depend on the following four variable constants, $\tau, \eta, \vartheta_4,$ and  $\vartheta_1$. Let us discuss how to choose these constants so that $\vartheta_5$  becomes positive. 
Since $\tau$ can be taken arbitrary large, the fraction $\frac{(\frac{\mathcal{C}}{c_0})^{\frac{1}{p}}\vartheta_4
+(1+\eta)}{\tau}$ can be neglected in comparison to $1$. The constant $\vartheta_4$ first  appeared in choice of $\alpha_k$ in \autoref{A} and is arbitrary. So, $\vartheta_4$ can  be taken as a small number. The constant $\eta$ is clearly less than $1$ (cf. point $(5)$ of assumption in Subsection $3.1$). Finally, the constant $\vartheta_4$ is also arbitrary which has been  intentionally introduced in \autoref{(3.3)} and can be taken very small. Therefore,  we conclude that $\vartheta_5$ can be  positive for wisely chosen constants.
\end{rema}

Note that  we have not yet discussed any conditions required to be satisfied by combination parameters $\lambda_k^{\delta}$ (see \autoref{(3.2)}) in our analysis. So, in this regard, let  $\zeta>1$ be a constant  such that for all $k\geq 0$, following two inequalities hold \begin{equation}\label{(3.16)}
\frac{\lambda_k^{\delta}+(\lambda_{k}^{\delta})^{p^*}}{p^*(2c_0)^{p^*-1}} \|\gamma_{k}^{\delta}-\gamma_{k-1}^{\delta}\|^{p^*}\leq \frac{\vartheta_5\upsilon_k^{\delta}}{\zeta}\|F(w_k^{\delta})-v^{\delta}\|^{s},
\end{equation}
\begin{equation}\label{(3.17)}
\frac{\lambda_k^{\delta}+(\lambda_{k}^{\delta})^{p^*}}{p^*(2c_0)^{p^*-1}} \|\gamma_{k}^{\delta}-\gamma_{k-1}^{\delta}\|^{p^*}\leq c_0\epsilon^p.
\end{equation}
Clearly, $\lambda_k^{\delta}=0$ satisfy the  inequalities \autoref{(3.16)}, \autoref{(3.17)}. The  technical discussion on  choosing the non-trivial $\lambda_k^{\delta}$ satisfying inequalities \autoref{(3.16)}, \autoref{(3.17)} is shifted to Section $4$. 

 Next, by engaging Propositions $3.2$ and $3.3$, we infact show that $w_k^{\delta}\in B(u_0, 3\epsilon)$ and monotonocity of the Bregman distance, i.e. $\Theta_k\leq 0$ with the choices of $\lambda_k^{\delta}$ satisfying \autoref{(3.16)}, \autoref{(3.17)}.
\begin{prop}
Let $V$ be uniformly smooth, $U$ be reflexive and assumptions of Subsection $3.1$ be satisfied. Further, let $\vartheta_5$ in $\autoref{(3.14)}$ be  a positive constant and $\lambda_k^{\delta}$ satisfy  $\autoref{(3.16)}$, $\autoref{(3.17)}$. Then\begin{enumerate}[(i)]
\item $w_k^{\delta}\in B(u_0, 3\epsilon)$ for $k\geq 0$.
\item $u_k^{\delta}\in B(u_0, 2\epsilon)$ for $k\geq 0$.
\end{enumerate}
Moreover, if $\hat{u}\in B(u_0, 2\epsilon)\cap D(\varphi)$ is any solution of $\autoref{1.1}$, then $\Theta_k\leq 0$. 
\end{prop}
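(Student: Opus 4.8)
The plan is to prove the three assertions together by a single induction on $k$, because they are entangled: the membership $w_k^\delta\in B(u_0,3\epsilon)$ is exactly what is needed to apply Proposition $3.2$ (its estimate \autoref{(3.6)}) and Proposition $3.3$ at index $k$, whereas the resulting sign of $\Theta_{k+1}$ is what pushes $u_{k+1}^\delta$ and $w_{k+1}^\delta$ back into the balls $B(u_0,2\epsilon)$ and $B(u_0,3\epsilon)$. So I would take as inductive statement $P(k)$: ``$u_k^\delta\in B(u_0,2\epsilon)$, $w_k^\delta\in B(u_0,3\epsilon)$, and $\Theta_k\le 0$ for every solution $\hat u\in B(u_0,2\epsilon)\cap D(\varphi)$ of \autoref{1.1}'' (here $\Theta_k$ is taken with respect to the chosen $\hat u$). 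Before starting I would record, from point $(3)$ of Subsection $3.1$ and \autoref{(2.5)}, that the solution $u^*$ supplied there satisfies $c_0\|u^*-u_0\|^p\le\mathcal{D}_{\gamma_0}\varphi(u^*,u_0)\le c_0\epsilon^p$, hence $\|u^*-u_0\|\le\epsilon$, so $u^*$ is an admissible $\hat u$; the ball memberships will be extracted by comparing with this particular $u^*$. Recall also that $u_k^\delta=\nabla\varphi^*(\gamma_k^\delta)$ and $w_k^\delta=\nabla\varphi^*(\Im_k^\delta)$ force $\gamma_k^\delta\in\partial\varphi(u_k^\delta)$ and $\Im_k^\delta\in\partial\varphi(w_k^\delta)$ by \autoref{(2.7)}, so every Bregman distance below is meaningful.

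For the base case $k=0$, the extrapolation term vanishes since $\gamma_0^\delta=\gamma_{-1}^\delta=\gamma_0$, so $\Im_0^\delta=\gamma_0=\Im_0$, $w_0^\delta=\nabla\varphi^*(\gamma_0)=u_0$, $u_0^\delta=u_0$; both ball memberships are then trivial, and $\Theta_0=0$ straight from \autoref{(3.7)} together with $u_{-1}^\delta=u_0^\delta$, $\gamma_{-1}^\delta=\gamma_0^\delta$.

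For the inductive step, assuming $P(j)$ for $0\le j\le k$, I would proceed in four moves. First, telescoping $\Theta_1\le 0,\dots,\Theta_k\le 0$ with $\hat u=u^*$ shows $\mathcal{D}_{\gamma_j^\delta}\varphi(u^*,u_j^\delta)$ is nonincreasing in $j$, hence $\le\mathcal{D}_{\gamma_0}\varphi(u^*,u_0)\le c_0\epsilon^p$. Second, since $w_k^\delta\in B(u_0,3\epsilon)$ by $P(k)$, Proposition $3.3$ is available at index $k$; inserting the combination-parameter bound \autoref{(3.16)} into \autoref{(3.15)} and using $\Theta_k\le 0$, $\lambda_k^\delta\ge 0$, $\upsilon_k^\delta\ge 0$, $\zeta>1$ gives $\Theta_{k+1}\le\lambda_k^\delta\Theta_k-(1-\zeta^{-1})\vartheta_5\upsilon_k^\delta\|F(w_k^\delta)-v^\delta\|^{s}\le 0$ for every admissible $\hat u$, which is the monotonicity part of $P(k+1)$. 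Third, telescoping once more (with $\hat u=u^*$) gives $\mathcal{D}_{\gamma_{k+1}^\delta}\varphi(u^*,u_{k+1}^\delta)\le c_0\epsilon^p$, so \autoref{(2.5)} yields $\|u_{k+1}^\delta-u^*\|\le\epsilon$ and the triangle inequality with $\|u^*-u_0\|\le\epsilon$ gives $u_{k+1}^\delta\in B(u_0,2\epsilon)$. Fourth, I would apply \autoref{(3.5)} at index $k+1$ — crucially this estimate holds with no ball hypothesis, unlike \autoref{(3.6)} — together with \autoref{(3.17)} and $\Theta_{k+1}\le 0$ to get $\mathcal{D}_{\Im_{k+1}^\delta}\varphi(u^*,w_{k+1}^\delta)\le\mathcal{D}_{\gamma_{k+1}^\delta}\varphi(u^*,u_{k+1}^\delta)+c_0\epsilon^p\le 2c_0\epsilon^p$, whence \autoref{(2.5)} gives $\|w_{k+1}^\delta-u^*\|\le 2^{1/p}\epsilon$ and, since $2^{1/p}+1<3$ for $p>1$, $w_{k+1}^\delta\in B(u_0,3\epsilon)$. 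This closes the induction, and the final assertion $\Theta_k\le 0$ is part of $P(k)$.

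The main obstacle is untangling this circular dependence: the key realisation is that \autoref{(3.5)} is unconditional, so it can be used to place $w_{k+1}^\delta$ inside $B(u_0,3\epsilon)$ before Proposition $3.3$ (which does require $w_k^\delta\in B(u_0,3\epsilon)$) is invoked at the next index, and that the constant $2^{1/p}$ coming out of the $p$-convexity estimate \autoref{(2.5)} must be kept strictly below $2$ so that the radius $3\epsilon$ is not overshot when passing from a bound on $\|w_{k+1}^\delta-u^*\|$ to one on $\|w_{k+1}^\delta-u_0\|$. Everything else is bookkeeping with the three-point identity \autoref{(2.3)} and the parameter constraints \autoref{(3.3)}, \autoref{A}, \autoref{(3.16)}, \autoref{(3.17)}, which have already been arranged so that $\vartheta_5>0$.
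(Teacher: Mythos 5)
Your proposal is correct and follows essentially the same route as the paper: a single induction entangling the two ball memberships with $\Theta_k\leq 0$, using \autoref{(3.15)} with \autoref{(3.16)} to get $\Theta_{k+1}\leq 0$, telescoping against the reference solution to place $u_{k+1}^{\delta}$ in $B(u_0,2\epsilon)$, and then the unconditional estimate \autoref{(3.5)} with \autoref{(3.17)} to place $w_{k+1}^{\delta}$ in $B(u_0,3\epsilon)$ via the $2^{1/p}\epsilon+\epsilon\leq 3\epsilon$ bound (you even correctly note that the relevant fact is $2^{1/p}<2$, where the paper's text misprints $2^{1/p}<1$). The only cosmetic difference is that you compare with the solution $u^*$ from assumption $(3)$ while the paper uses the minimizer $u^{\dagger}$ of \autoref{(3.1)}; both satisfy $\mathcal{D}_{\gamma_0}\varphi(\cdot,u_0)\leq c_0\epsilon^p$, so this changes nothing.
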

\begin{proof}
Observe that with the initial choices $u_{-1}^{\delta}=u_0^{\delta}=u_0$ and $\gamma_{-1}^{\delta}=\gamma_0^{\delta}=\gamma_0\in\partial\varphi(u_0)$, $w_0^{\delta}=\nabla \varphi^*(\Im_0^{\delta})=\nabla \varphi^*(\gamma_0)=u_0$, parts $(i)$ and $(ii)$ are trivial. We prove the results $(i)$ and $(ii)$ via induction hypothesis. So, to this end, let $(i)$ and $(ii)$ hold for all integers less than or equal to a positive integer $r$. This means Proposition $3.3$ is valid for $w_r^{\delta}$ which gives
\begin{equation*}
\Theta_{r+1}\leq \lambda_r^{\delta}\Theta_r+\frac{\lambda_r^{\delta}+(\lambda_{r}^{\delta})^{p^*}}{p^*(2c_0)^{p^*-1}} \|\gamma_{r}^{\delta}-\gamma_{r-1}^{\delta}\|^{p^*}-\vartheta_5\upsilon_r^{\delta}\|F(w_r^{\delta})-v^{\delta}\|^{s}.
\end{equation*}
Further, by induction as $\Theta_r\leq 0$ and $\lambda_r^{\delta}\geq 0$, above inequality implies that 
\begin{equation*}
\Theta_{r+1}\leq \frac{\lambda_r^{\delta}+(\lambda_{r}^{\delta})^{p^*}}{p^*(2c_0)^{p^*-1}} \|\gamma_{r}^{\delta}-\gamma_{r-1}^{\delta}\|^{p^*}-\vartheta_5\upsilon_r^{\delta}\|F(w_r^{\delta})-v^{\delta}\|^{s}.
\end{equation*}
Incorporating \autoref{(3.16)} in above inequality yields 
\begin{equation}\label{C}
\Theta_{r+1}\leq \frac{\vartheta_5\upsilon_r^{\delta}}{\zeta} \|F(w_r^{\delta})-v^{\delta}\|^{s}-\vartheta_5\upsilon_r^{\delta}\|F(w_r^{\delta})-v^{\delta}\|^{s}\leq 0
\end{equation}
since $\zeta >1$. Thus, we have proved that $\Theta_k\leq 0$ for all $k$. Consequently, by taking $\hat{u}=u^{\dagger}$ and repeatedly applying the argument $\Theta_k\leq 0$, we get\begin{equation}\label{(3.18)}
 \mathcal{D}_{\gamma_{r+1}^{\delta}}\varphi(u^{\dagger}, u_{r+1}^{\delta})\leq\mathcal{D}_{\gamma_{r}^{\delta}}\varphi(u^{\dagger}, u_{r}^{\delta})\leq \cdots\leq \mathcal{D}_{\gamma_{0}^{\delta}}\varphi(u^{\dagger}, u_{0}^{\delta}).\end{equation}Plugging the estimate from point $(3)$ of assumptions in Subsection $3.1$ and \autoref{(2.5)} in \autoref{(3.18)} to reach at 
\begin{equation*}
c_0\|u_{r+1}^{\delta}-u^{\dagger}\|^p\leq 
\mathcal{D}_{\gamma_{r+1}^{\delta}}\varphi(u^{\dagger}, u_{r+1}^{\delta})\leq c_0\epsilon^p.
\end{equation*}
Again apply point $(3)$ of assumptions in Subsection $3.1$ and \autoref{(2.5)} with $\hat{u}=u^{\dagger}$ to see that \begin{equation}\label{(3.19)}
c_0\|u_0-u^{\dagger}\|^p\leq 
\mathcal{D}_{\gamma_{0}}\varphi(u^{\dagger}, u_{0}^{\delta})\leq c_0\epsilon^p.
\end{equation}
From the last two estimates, we have that 
\begin{equation*}
\|u_{r+1}^{\delta}-u_0\|\leq  \|u_{r+1}^{\delta}-u^{\dagger}\|+\|u^{\dagger}-u_0\|\leq 2\epsilon.
\end{equation*}
Thus, $u_{r+1}^{\delta}\in B(u_0, 2\epsilon)$ which means that proof of part $(ii)$ is complete. Now we move on to prove  part $(i)$. For that, observe that \autoref{(3.17)} and \autoref{(3.5)} provide  the estimate \begin{equation*}\begin{split}
\mathcal{D}_{\Im_{r+1}^{\delta}}\varphi(u^{\dagger}, w_{r+1}^{\delta})-\mathcal{D}_{\gamma_{r+1}^{\delta}}\varphi(u^{\dagger}, u_{r+1}^{\delta})\hspace{80mm}\\ \leq 
\lambda_{r+1}^{\delta}\Theta_{r+1}+\frac{\lambda_{r+1}^{\delta}}{p^*(2c_0)^{p^*-1}} \|\gamma_{r+1}^{\delta}-\gamma_{r}^{\delta}\|^{p^*}+\frac{(\lambda_{r+1}^{\delta})^{p^*}}{p^*(2c_0)^{p^*-1}} \|\gamma_{r+1}^{\delta}-\gamma_{r}^{\delta}\|^{p^*} \end{split}
\end{equation*}\begin{equation*}
\leq \lambda_{r+1}^{\delta}\Theta_{r+1}+c_0\epsilon^p.\hspace{61mm} 
\end{equation*}
This with  \autoref{(3.18)}, \autoref{(3.19)} and the assertion $\Theta_{r+1}\leq 0$ further provides the estimate
\begin{equation*} 
\mathcal{D}_{\Im_{r+1}^{\delta}}\varphi(u^{\dagger}, w_{r+1}^{\delta})\leq \mathcal{D}_{\gamma_{0}^{\delta}}\varphi(u^{\dagger}, u_{0}^{\delta})+c_0\epsilon^p\leq 2c_0\epsilon^p.\end{equation*}
Plugging \autoref{(2.5)} in above to deduce that\begin{equation*}
c_0\|w_{r+1}^{\delta}-u^{\dagger}\|^p\leq \mathcal{D}_{\Im_{r+1}^{\delta}}\varphi(u^{\dagger}, w_{r+1}^{\delta})\leq 2c_0\epsilon^p.
\end{equation*}
This estimate and \autoref{(3.19)} imply that\begin{equation*}
\|w_{r+1}^{\delta}-u_0\|\leq 2^{\frac{1}{p}}\epsilon+\epsilon\leq 3\epsilon,
\end{equation*}
since $2^{\frac{1}{p}}<2$. Therefore, $w_{r+1}^{\delta}\in B(u_0, 3\rho)$ which means that  proof of part $(i)$ and that of proposition is complete.
\end{proof}
We have incorporated the discrepancy principle \autoref{(3.4)} in our analysis. Through the  following proposition, we show that the stopping index  $k_{\delta}$ chosen via discrepancy principle is finite.
\begin{prop}
With the assumptions of Proposition $3.4$, we have
\begin{equation}\label{(3.20)}
\sum_{r=0}^k\upsilon_r^{\delta}\|F(w_r^{\delta})-v^{\delta}\|^{s}\leq \vartheta_5^{-1}\frac{\zeta}{\zeta-1} \mathcal{D}_{\gamma_{0}^{\delta}}\varphi(\hat{u}, u_{0}^{\delta}).
\end{equation}
Moreover, if the stopping index  $k_{\delta}$ is chosen via discrepancy principle $\autoref{(3.4)}$, then it  is finite.
\end{prop}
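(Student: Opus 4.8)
The plan is to convert the one-step recursion of Proposition 3.3 into a telescoping estimate for the Bregman distances $\mathcal{D}_{\gamma_k^{\delta}}\varphi(\hat{u},u_k^{\delta})$ and then read off both assertions. First I would take \autoref{(3.15)} and use the defining condition \autoref{(3.16)} for the combination parameters to absorb the $\|\gamma_k^{\delta}-\gamma_{k-1}^{\delta}\|^{p^*}$ term, obtaining
$$\Theta_{k+1}\leq \lambda_k^{\delta}\Theta_k+\frac{\vartheta_5\upsilon_k^{\delta}}{\zeta}\|F(w_k^{\delta})-v^{\delta}\|^{s}-\vartheta_5\upsilon_k^{\delta}\|F(w_k^{\delta})-v^{\delta}\|^{s}.$$
Proposition 3.4 supplies $\Theta_k\leq 0$ and $\lambda_k^{\delta}\geq 0$, so $\lambda_k^{\delta}\Theta_k\leq 0$ may be discarded, which leaves
$$\mathcal{D}_{\gamma_{k+1}^{\delta}}\varphi(\hat{u},u_{k+1}^{\delta})-\mathcal{D}_{\gamma_k^{\delta}}\varphi(\hat{u},u_k^{\delta})=\Theta_{k+1}\leq-\vartheta_5\,\frac{\zeta-1}{\zeta}\,\upsilon_k^{\delta}\|F(w_k^{\delta})-v^{\delta}\|^{s}.$$
Summing over $r=0,\dots,k$, the left side telescopes to $\mathcal{D}_{\gamma_{k+1}^{\delta}}\varphi(\hat{u},u_{k+1}^{\delta})-\mathcal{D}_{\gamma_0^{\delta}}\varphi(\hat{u},u_0^{\delta})$; dropping the first, nonnegative, term and rearranging gives exactly \autoref{(3.20)}.

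For the finiteness of $k_{\delta}$ (with noisy data, i.e. $\delta>0$) I would argue by contradiction: assume $k_{\delta}=\infty$, so that $\|r_k^{\delta}\|=\|F(w_k^{\delta})-v^{\delta}\|>\tau\delta$ for every $k$ and hence $\upsilon_k^{\delta}$ is always given by the first branch of \autoref{(3.3)}. The key step is a uniform positive lower bound for $\upsilon_k^{\delta}\|r_k^{\delta}\|^{s}$. For the numerator of the first term in the minimum I would use the standing constraint $\vartheta_{2,k}(t_k^{\delta})^{p^*}\leq\bar{\vartheta_{2}}^{p^*-1}\|r_k^{\delta}\|^{s}$ (choosing $\bar{\vartheta_{2}}$ strictly below $\vartheta_1$, which the parameter constraints allow and which also keeps the numerator nondegenerate) to get $\vartheta_1^{p^*-1}\|r_k^{\delta}\|^{s}-\vartheta_{2,k}(t_k^{\delta})^{p^*}\geq(\vartheta_1^{p^*-1}-\bar{\vartheta_{2}}^{p^*-1})\|r_k^{\delta}\|^{s}$; for the denominator, since Proposition 3.4 gives $w_k^{\delta}\in B(u_0,3\epsilon)$, assumption $(6)$ of Subsection 3.1 together with the gauge identity $\|J_s^V(r)\|=\|r\|^{s-1}$ yields $\|L(w_k^{\delta})^*J_s^V(r_k^{\delta})\|\leq C_0\|r_k^{\delta}\|^{s-1}$. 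Substituting these and using $p^*-1=\tfrac{1}{p-1}$, the power of $\|r_k^{\delta}\|$ that appears collapses as $s(p-1)-p(s-1)=p-s$, so the first branch is bounded below by a fixed positive multiple of $\|r_k^{\delta}\|^{p-s}$; combined with the second branch $\vartheta_3\|r_k^{\delta}\|^{p-s}$ this gives $\upsilon_k^{\delta}\geq c_\star\|r_k^{\delta}\|^{p-s}$ with $c_\star>0$ independent of $k$, whence $\upsilon_k^{\delta}\|r_k^{\delta}\|^{s}\geq c_\star\|r_k^{\delta}\|^{p}>c_\star(\tau\delta)^{p}$.

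Finally, summing $\upsilon_r^{\delta}\|F(w_r^{\delta})-v^{\delta}\|^{s}>c_\star(\tau\delta)^{p}$ over $r=0,\dots,k$ forces $\sum_{r=0}^{k}\upsilon_r^{\delta}\|F(w_r^{\delta})-v^{\delta}\|^{s}>(k+1)c_\star(\tau\delta)^{p}\to\infty$ as $k\to\infty$, contradicting the finite bound \autoref{(3.20)} (its right-hand side is finite since $\hat{u},u_0\in D(\varphi)$, so $\mathcal{D}_{\gamma_0^{\delta}}\varphi(\hat{u},u_0^{\delta})<\infty$). Hence $k_{\delta}<\infty$. I expect the only genuinely delicate point to be the exponent bookkeeping in the lower bound for $\upsilon_k^{\delta}$ — making the powers of $\|r_k^{\delta}\|$ produced by the duality map ($s-1$), the operator norm ($1$) and the exponent $1/(p^*-1)=p-1$ combine to precisely $p-s$ — together with recording that the parameter choices must keep $\bar{\vartheta_{2}}$ strictly below $\vartheta_1$ so that the first branch of the step size does not degenerate.
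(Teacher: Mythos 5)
Your proposal is correct and follows essentially the same route as the paper: the same telescoping of the one-step decrease $\Theta_{r+1}\leq-\vartheta_5(1-1/\zeta)\upsilon_r^{\delta}\|F(w_r^{\delta})-v^{\delta}\|^{s}$ for \autoref{(3.20)}, and the same contradiction argument via the lower bound $\upsilon_k^{\delta}\geq c_\star\|r_k^{\delta}\|^{p-s}$ (using assumption $(6)$, the choice of $\vartheta_{2,k}$, and the identity $\tfrac{s}{p^*-1}-p(s-1)=p-s$) for the finiteness of $k_{\delta}$. Your explicit remark that $\bar{\vartheta_{2}}$ must be kept strictly below $\vartheta_1$ so the first branch of the step size does not degenerate is a point the paper leaves implicit, but otherwise the arguments coincide.
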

\begin{proof}
Since we have considered the assumptions of Proposition $3.4$, all of its results are applicable in this result. From \autoref{C}, for $r\geq 0$, we have $$\vartheta_5\upsilon_r^{\delta}\|F(w_r^{\delta})-v^{\delta}\|^{s}- \frac{\vartheta_5\upsilon_r^{\delta}}{\zeta} \|F(w_r^{\delta})-v^{\delta}\|^{s} \leq\mathcal{D}_{\gamma_{r}^{\delta}}\varphi(\hat{u}, u_{r}^{\delta})-\mathcal{D}_{\gamma_{r+1}^{\delta}}\varphi(\hat{u}, u_{r+1}^{\delta}).$$Hence, for any integer $k$, summing above from $r=0$ to $k$ yields \begin{equation*}
\vartheta_5\bigg(1-\frac{1}{\zeta}\bigg)\sum_{r=0}^k\upsilon_r^{\delta}\|F(w_r^{\delta})-v^{\delta}\|^{s}\leq  \sum_{r=0}^k\big(\mathcal{D}_{\gamma_{r}^{\delta}}\varphi(\hat{u}, u_{r}^{\delta})-\mathcal{D}_{\gamma_{r+1}^{\delta}}\varphi(\hat{u}, u_{r+1}^{\delta})\big)\hspace{14mm}
\end{equation*}
$$\hspace{35mm}= \mathcal{D}_{\gamma_{0}^{\delta}}\varphi(\hat{u}, u_{0}^{\delta})-\mathcal{D}_{\gamma_{k+1}^{\delta}}\varphi(\hat{u}, u_{k+1}^{\delta})$$ $$\hspace{5mm}\leq \mathcal{D}_{\gamma_{0}^{\delta}}\varphi(\hat{u}, u_{0}^{\delta}).$$ Thus, last inequality is the desired  estimate \autoref{(3.20)}. Next, we  show that the stopping index $k_{\delta}$ is finite. To see this, let on the contrary that  $k_{\delta}$ is infinite. Therefore, due to \autoref{(3.4)}, $\|r_k^{\delta}\|>\tau\delta$ for all $k\geq 0$. Consequently,  from the definition of $\upsilon_k^{\delta}$ in \autoref{(3.3)}, it can be easily seen that\begin{equation}\label{(3.21)}
\upsilon_k^{\delta}=\min\left\{
\dfrac{\dfrac{1}{2}\bigg(\vartheta_1^{p^*-1}
\|r_k^{\delta}\|^{s}-\vartheta_{2, k}(t_k^{\delta})^{p^*}\bigg)^{\frac{1}{p^*-1}}}{\|
L(w_k^{\delta})^*J_s^V(r_k^{\delta})\|^{p}}, \vartheta_3\|r_k^{\delta}\|^{p-s} \right\}.
\end{equation}
By utilizing the point $(6)$ of assumptions in Subsection $3.1$, observe that $$\|
L(w_k^{\delta})^*J_s^V(r_k^{\delta})\|^{p}\leq C_0^p\|J_s^V(r_k^{\delta})\|^{p}=C_0^p\|r_k^{\delta}\|^{p(s-1)}.$$
This with \autoref{(3.21)} leads to the inequality
\begin{equation*}
\upsilon_k^{\delta}\geq \min\left\{
\dfrac{\dfrac{1}{2}\bigg(\vartheta_1^{p^*-1}
\|r_k^{\delta}\|^{s}-\vartheta_{2, k}(t_k^{\delta})^{p^*}\bigg)^{\frac{1}{p^*-1}}}{C_0^p\|r_k^{\delta}\|^{p(s-1)}}, \vartheta_3\|r_k^{\delta}\|^{p-s} \right\}.
\end{equation*}
Above with the choice of $\vartheta_{2, k}$ gives \begin{equation*}
\upsilon_k^{\delta}\geq \min\left\{
\dfrac{(\vartheta_1^{p^*-1}-\bar{\vartheta_2}^{p^*-1})^{\frac{1}{p^*-1}}\|r_k^{\delta}\|^{\frac{s}{p^*-1}}}{2C_0^p\|r_k^{\delta}\|^{p(s-1)}}, \vartheta_3\|r_k^{\delta}\|^{p-s} \right\}
\end{equation*} 
\begin{equation}\label{G}
\hspace{5mm}= \min\left\{
\dfrac{(\vartheta_1^{p^*-1}-\bar{\vartheta_2}^{p^*-1})^{\frac{1}{p^*-1}}
\|r_k^{\delta}\|^{p-s}}{2C_0^p}, \vartheta_3\|r_k^{\delta}\|^{p-s} \right\},
\end{equation}
since $\frac{s}{p^*-1}-p(s-1)=p-s$.
Therefore, this with \autoref{(3.4)} and \autoref{(3.20)} yields \begin{equation*} \vartheta_5^{-1}\frac{\zeta}{\zeta-1} \mathcal{D}_{\gamma_{0}^{\delta}}\varphi(\hat{u}, u_{0}^{\delta})\geq
\min\left\{
\dfrac{(\vartheta_1^{p^*-1}-\bar{\vartheta_2}^{p^*-1})^{\frac{1}{p^*-1}}
}{2C_0^p}, \vartheta_3 \right\}\ \sum_{r=0}^k\|F(w_r^{\delta})-v^{\delta}\|^{p}
\end{equation*} $$\hspace{22mm}\geq \min\left\{
\dfrac{(\vartheta_1^{p^*-1}-\bar{\vartheta_2}^{p^*-1})^{\frac{1}{p^*-1}}
}{2C_0^p}, \vartheta_3 \right\}\ \sum_{r=0}^k(\tau\delta)^{p}$$$$\hspace{20mm}= \min\left\{
\dfrac{(\vartheta_1^{p^*-1}-\bar{\vartheta_2}^{p^*-1})^{\frac{1}{p^*-1}}
}{2C_0^p}, \vartheta_3 \right\}\ k(\tau\delta)^{p}.$$
Since $k$ is arbitrary, right side of above inequality can be arbitrary large, however left side is some fixed finite number. Thus, we have arrived at a contradiction which means  that $k_{\delta}$ is finite. This completes the proof.\end{proof}
Now, we establish a convergence  result for our  novel  iteration scheme \autoref{(3.2)} in which we show that  in the presence of  precise data, iterates of \autoref{(3.2)} necessarily converges to a solution of \autoref{1.1}. In order to see this, we assume that $\delta=0$ and consider the scheme \autoref{(3.2)} by omitting the superscript $\delta$ from all the parameters in which it is involved. We remark that all the parameters and constants mentioned in Subsection $3.2$ would have  same meaning except $\upsilon_k$ in \autoref{(3.3)} which we redefine as 
\begin{equation}\label{(3.22)}
\upsilon_k=\begin{cases}\min\left\{
\dfrac{\dfrac{1}{2}\bigg(\vartheta_1^{p^*-1}
\|r_k\|^{s}-\vartheta_{2, k}(t_k)^{p^*}\bigg)^{\frac{1}{p^*-1}}}{\|
L(w_k)^*J_s^V(r_k)\|^{p}}, \vartheta_3\|r_k\|^{p-s} \right\}\ \ \text{if}\ r_k\neq 0\\\hspace{20mm}0 \hspace{72mm}\ \text{if}\ r_k=0.
\end{cases}
\end{equation}
We are now ready to discuss the convergence result for scheme \autoref{(3.2)} in the noise free case with the aid of Proposition of $3.1$. To this end, let \begin{equation}\label{(3.23)}
 \vartheta_6=1-\bigg(\frac{\mathcal{C}}{c_0}\bigg)^{\frac{1}{p}}\vartheta_4-\eta-\frac{\big(\vartheta_1^{p^*-1}\big)}{p^*(2c_0)^{p^*-1}}.\end{equation}
 
\begin{thm}
Let $V$ be uniformly smooth, $U$ be reflexive and assumptions of Subsection $3.1$ be satisfied. Further, let  $\lambda_k$ satisfy  $\autoref{(3.16)}$, $\autoref{(3.17)}$ $($with $\delta=0)$ and $\vartheta_6$ defined in $\autoref{(3.23)}$ be positive. Moreover, assume that the sequences $\{\alpha_k\}$ and $\{\lambda_k\}$ are such that\begin{equation}\label{F}
\sum_{k=0}^{\infty}\alpha_k\|\gamma_0-\gamma_k\|<\infty,
\end{equation}
\begin{equation}\label{D}
\sum_{k=0}^{\infty}\lambda_k\|\gamma_k-\gamma_{k-1}\|<\infty.
\end{equation}
Then, there exists a $\bar{u}$ which satisfy $\autoref{1.1}$ in $B(u_0, 2\epsilon)\cap D(\varphi)$ such that $$\lim_{k\to\infty}\mathcal{D}_{\gamma_k}\varphi(\bar{u}, u_k)=0, \ \text{and}\ \ \lim_{k\to\infty}\|u_k-\bar{u}\|=0.$$
\end{thm}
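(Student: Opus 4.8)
The plan is to verify the hypotheses of Proposition 3.1 for the exact-data iterates $\{u_k\}$ and $\{\gamma_k\}$, and then invoke it to obtain the limiting solution $\bar{u}$; the norm convergence will then follow from $p$-convexity via \autoref{(2.5)}. By Proposition 3.4 (applied with $\delta=0$), we already have $u_k\in B(u_0,2\epsilon)$, $w_k\in B(u_0,3\epsilon)$, $\gamma_k\in\partial\varphi(u_k)$ (so (i) holds), and $\Theta_k\le 0$ for every $k$; the latter says $\{\mathcal{D}_{\gamma_k}\varphi(\hat u,u_k)\}$ is monotonically decreasing for any solution $\hat u$ in $B(u_0,2\epsilon)\cap D(\varphi)$, giving (ii). For (iii), I would use the summability estimate \autoref{(3.20)} (valid with $\delta=0$ and $\vartheta_6$ in place of $\vartheta_5$): it gives $\sum_k \upsilon_k\|F(w_k)-v\|^s<\infty$, and combining this with the lower bound on $\upsilon_k$ of the type \autoref{G} — i.e. $\upsilon_k\ge c\,\|r_k\|^{p-s}$ whenever $r_k\neq 0$ — yields $\sum_k\|F(w_k)-v\|^p<\infty$, hence $\|F(w_k)-v\|\to 0$. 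Since $u_k=\nabla\varphi^*(\gamma_k)$ and $w_k=\nabla\varphi^*(\Im_k)$ with $\Im_k-\gamma_k=\lambda_k(\gamma_k-\gamma_{k-1})\to 0$ by \autoref{D} and the Lipschitz estimate \autoref{(2.6)} for $\nabla\varphi^*$, we get $\|u_k-w_k\|\to 0$; together with the tangential cone condition (point (5)), which controls $\|F(u_k)-v\|$ by $\|F(w_k)-v\|$ up to the $L(w_k)(u_k-w_k)$ term bounded via point (6), this upgrades to $\|F(u_k)-v\|\to 0$, establishing (iii).

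The remaining and most delicate task is condition (iv): producing a subsequence $\{k_n\}$ along which $\sup_{n\ge l}|\langle\gamma_{k_n}-\gamma_{k_l},u_{k_n}-\hat u\rangle|\to 0$. The standard device is to choose $k_n$ as an index minimizing $\mathcal{D}_{\gamma_k}\varphi(\hat u,u_k)$ over an appropriate range, or more precisely to exploit that the telescoped decrease $\sum_k(\mathcal{D}_{\gamma_k}\varphi(\hat u,u_k)-\mathcal{D}_{\gamma_{k+1}}\varphi(\hat u,u_{k+1}))$ converges (the sequence is monotone and bounded below by $0$). For $m\ge n$ one writes, using the three-point identity \autoref{(2.3)} repeatedly,
\begin{equation*}
\langle\gamma_{k_m}-\gamma_{k_n},u_{k_m}-\hat u\rangle=\mathcal{D}_{\gamma_{k_n}}\varphi(\hat u,u_{k_n})-\mathcal{D}_{\gamma_{k_m}}\varphi(\hat u,u_{k_m})-\mathcal{D}_{\gamma_{k_n}}\varphi(u_{k_m},u_{k_n}),
\end{equation*}
so the first two terms are controlled by the Cauchy property of the monotone sequence. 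The genuinely new difficulty, compared to the pure Landweber case, is that here one must also absorb the extra contributions coming from the extrapolation term $\lambda_k(\gamma_k-\gamma_{k-1})$ and from the iteratively-regularized term $\alpha_k(\Im_0-\Im_k)=\alpha_k(\gamma_0-\Im_k)$ in the update \autoref{(3.2)}; it is exactly for this that the hypotheses \autoref{F} and \autoref{D} are imposed — they guarantee that $\sum_k\alpha_k\|\gamma_0-\gamma_k\|$ and $\sum_k\lambda_k\|\gamma_k-\gamma_{k-1}\|$ are finite, so the perturbation of $\gamma_{k+1}-\gamma_k$ away from the pure gradient step is summable and cannot destroy the Cauchy estimate. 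I would therefore first derive a clean recursion for $\gamma_{k+1}-\gamma_k$, bound $\|\gamma_{k+1}-\Im_k\|$ using \autoref{(3.11)} (with $\delta=0$) together with the step-size lower bound, and feed everything into the three-point expansion above, picking $\{k_n\}$ so that $\lambda_{k_n}\|\gamma_{k_n}-\gamma_{k_n-1}\|\to 0$ and $\upsilon_{k_n}\|F(w_{k_n})-v\|^s\to 0$ (possible since the corresponding series converge).

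Once (i)–(iv) are in hand, Proposition 3.1 delivers a solution $\bar u\in B(u_0,2\epsilon)\cap D(\varphi)$ of \autoref{1.1} with $\mathcal{D}_{\gamma_k}\varphi(\bar u,u_k)\to 0$. Since $\bar u$ is itself an admissible $\hat u$, the monotonicity from Proposition 3.4 shows the full sequence (not just a subsequence) satisfies $\mathcal{D}_{\gamma_k}\varphi(\bar u,u_k)\to 0$. Finally, applying the $p$-convexity inequality \autoref{(2.5)} with $\tilde u=\bar u$, $u=u_k$, $\gamma=\gamma_k$ gives $c_0\|u_k-\bar u\|^p\le\mathcal{D}_{\gamma_k}\varphi(\bar u,u_k)\to 0$, hence $\|u_k-\bar u\|\to 0$, completing the proof. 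The main obstacle, as noted, is the careful bookkeeping in step (iv): making sure the extrapolation and regularization perturbations are summable and choosing the subsequence so all error terms vanish simultaneously; everything else is an adaptation of the exact-data Landweber argument of \cite{Jin2}.
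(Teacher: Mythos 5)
Your treatment of conditions (i)--(iii) of Proposition 3.1 and of the final passage from $\mathcal{D}_{\gamma_k}\varphi(\bar u,u_k)\to 0$ to $\|u_k-\bar u\|\to 0$ via \autoref{(2.5)} matches the paper's argument. The gap is in condition (iv), which you yourself flag as the delicate step but do not actually close. First, the three-point identity you display is not a viable shortcut: writing $\langle\gamma_{k_m}-\gamma_{k_n},u_{k_m}-\hat u\rangle$ in terms of Bregman distances leaves the term $\mathcal{D}_{\gamma_{k_n}}\varphi(u_{k_m},u_{k_n})$, which is \emph{not} controlled by the Cauchy property of the monotone sequence $\{\mathcal{D}_{\gamma_k}\varphi(\hat u,u_k)\}$ --- by that very identity, bounding it is equivalent to bounding the inner product you started with, so the argument is circular (there is also a sign error in your identity). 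One must instead estimate the inner product directly by telescoping the update, as you then propose.

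Second, and more importantly, your criterion for selecting the subsequence $\{k_n\}$ is not the right one. Choosing $k_n$ to minimize $\mathcal{D}_{\gamma_k}\varphi(\hat u,u_k)$, or merely so that $\lambda_{k_n}\|\gamma_{k_n}-\gamma_{k_n-1}\|\to 0$ and $\upsilon_{k_n}\|F(w_{k_n})-v\|^s\to 0$, accomplishes nothing: those quantities tend to zero along the \emph{full} sequence since the corresponding series converge. The property actually needed is residual monotonicity along the subsequence, i.e.\ $k_n$ is taken as the first index $\geq k_{n-1}+1$ with $\|F(w_{k_n})-v\|\leq\|F(w_{k_{n-1}})-v\|$, so that $\|F(w_{k_n})-v\|\leq\|F(w_k)-v\|$ for all $k<k_n$ (\autoref{(3.27)}). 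This is what permits the bound $\|L(w_k)(u_{k_n}-\hat u)\|\leq C\,\|F(w_k)-v\|$ for $k<k_n$ (\autoref{(3.29)}), which in turn lets the gradient contribution $\sum_{k=k_l}^{k_n-1}\upsilon_k\|F(w_k)-v\|^{s-1}\|L(w_k)(u_{k_n}-\hat u)\|$ be absorbed into the telescoping decrease of the Bregman distances; the extrapolation and regularization contributions are then handled by \autoref{D} and \autoref{F} exactly as you indicate. Note also that \autoref{(3.29)} requires the quantitative estimate $\|F(u_{k_n})-F(w_{k_n})\|\leq C\|F(w_{k_n})-v\|$ (the paper's \autoref{E}, obtained from \autoref{(3.16)} and the step-size definition), not merely the qualitative $\|u_k-w_k\|\to 0$ you use for (iii). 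Without the monotone-residual subsequence the key cross-term cannot be bounded, so as written the proof of (iv) does not go through.
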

\begin{proof}
By definition, we know that $u_k=\nabla\varphi^*(\gamma_k)$ which means $\gamma_k\in \partial\varphi(u_k)$. This means part $(i)$ of Proposition $3.1$ is satisfied. Now as  $\vartheta_6>0$, all the assumptions of Proposition $3.4$ are satisfied. Therefore, from Proposition $3.4$ we can see that $\Theta_k\leq 0$, i.e. $\mathcal{D}_{\gamma_{k}}\varphi(\hat{u}, u_{k})\leq \mathcal{D}_{\gamma_{k-1}}\varphi(\hat{u}, u_{k-1})$ for all $k$. Thus, $(ii)$ of Proposition $3.1$ is also satisfied. Due to monotonicity of the sequence $\{\mathcal{D}_{\gamma_{k}}\varphi(\hat{u}, u_{k})\}$, let $\lim_{k\to\infty}\mathcal{D}_{\gamma_{k}}\varphi(\hat{u}, u_{k})=\varrho$. This limit exists uniquely as the sequence  $\{\mathcal{D}_{\gamma_{k}}\varphi(\hat{u}, u_{k})\}$ is bounded below.

Since the assumptions of Proposition $3.5$ are only those of Proposition $3.4$, we have \begin{equation}\label{(3.24)}
\sum_{k=0}^{\infty}\upsilon_k\|F(w_k)-v\|^{s}< \infty.
\end{equation} Moreover, from \autoref{G} we know that \begin{equation*}
\upsilon_k\geq   \min\left\{
\dfrac{(\vartheta_1^{p^*-1}-\bar{\vartheta_2}^{p^*-1})^{\frac{1}{p^*-1}}
\|r_k\|^{p-s}}{2C_0^p}, \vartheta_3\|r_k\|^{p-s} \right\}.
\end{equation*}
Multiply both sides by $\|r_k\|^{s}$ to get 
\begin{equation*}
\upsilon_k\|F(w_k)-v\|^s\geq   \min\left\{
\dfrac{(\vartheta_1^{p^*-1}-\bar{\vartheta_2}^{p^*-1})^{\frac{1}{p^*-1}}}{2C_0^p}, \vartheta_3 \right\}\|F(w_k)-v\|^p.
\end{equation*}
Also, by definition of $\upsilon_k$ in \autoref{(3.22)}, we know that $$\upsilon_k\|F(w_k)-v\|^s\leq \vartheta_3\|F(w_k)-v\|^p.$$
Consequently, from the last two inequalities and \autoref{(3.24)},  it follows  that \begin{equation}\label{(3.25)}
\sum_{k=0}^{\infty}\|F(w_k)-v\|^{p}< \infty\implies \lim_{k\to\infty}\|F(w_k)-v\|=0. 
\end{equation} 
Further, we have$$\|F(u_k)-v\|\leq \|F(u_k)-F(w_k)\|+\|F(w_k)-v\|.$$Above implies that $\lim_{k\to\infty}\|F(u_k)-v\|=0$ provided $\lim_{k\to\infty}\|F(u_k)-F(w_k)\|=0$.
Due to points $(5)$ and $(6)$ of assumptions in Subsection $3.1$, we have $$\|F(u_k)-F(w_k)\|\leq \|F(u_k)-F(w_k)-L(w_k)(u_k-w_k)\|
+\|L(w_k)(u_k-w_k)\|$$ $$\ \leq \eta \|F(u_k)-F(w_k)\|+\|L(w_k)(u_k-w_k)\|$$ $$\ \leq \eta \|F(u_k)-F(w_k)\|+C_0\|u_k-w_k\|\hspace{9mm}$$
which means 
$$\|F(u_k)-F(w_k)\|\leq \frac{C_0}{1-\eta}\|u_k-w_k\|.$$
Applying the definitions of  $u_k$ and $w_k$ and plugging \autoref{(2.6)} in above to deduce that $$\|F(u_k)-F(w_k)\|\leq \frac{C_0}{1-\eta}\|\nabla\varphi^*(\gamma_k)-\nabla\varphi^*(\Im_k)\|\hspace{20mm}$$ $$\hspace{14mm}\leq  \frac{C_0}{1-\eta}\frac{1}{(2c_0)^{p^*-1}}\|\gamma_k-\Im_k\|^{p^*-1}$$ \begin{equation}\label{(3.26)}
 \hspace{20mm}=\frac{C_0}{1-\eta}\bigg(\frac{\lambda_k}{2c_0}\bigg)^{p^*-1}\|\gamma_k-\gamma_{k-1}\|^{p^*-1}.\end{equation}
With the choice of combination parameters  \autoref{(3.16)}, we have 
\begin{equation*}
\frac{\lambda_{k}^{p^*}}{p^*(2c_0)^{p^*-1}} \|\gamma_{k}-\gamma_{k-1}\|^{p^*}\leq \frac{\vartheta_5\upsilon_k}{\zeta}\|F(w_k)-v\|^{s},
\end{equation*}
which can also be written as
\begin{equation*}
\lambda_{k}^{p^*-1} \|\gamma_{k}-\gamma_{k-1}\|^{p^*-1}\leq \bigg(p^*(2c_0)^{p^*-1} \frac{\vartheta_5}{\zeta}\bigg)^{\frac{1}{p}}(\upsilon_k\|F(w_k)-v\|^{s})^\frac{1}{p}.
\end{equation*}
Incorporate the definition of $\upsilon_k$ from \autoref{(3.22)} to further reach at \begin{equation*}
\lambda_{k}^{p^*-1} \|\gamma_{k}-\gamma_{k-1}\|^{p^*-1}\leq \bigg(p^*(2c_0)^{p^*-1} \frac{\vartheta_5}{\zeta}\bigg)^{\frac{1}{p}}(\vartheta_3\|F(w_k)-v\|^{p})^\frac{1}{p}.
\end{equation*}
Plugging this in \autoref{(3.26)} yields \begin{equation}\label{E}
 \|F(u_k)-F(w_k)\|\leq \frac{C_0}{1-\eta} \bigg(p^* \frac{\vartheta_3\vartheta_5}{2c_0\zeta}\bigg)^{\frac{1}{p}}\|F(w_k)-v\|.\end{equation}Hence,  due to \autoref{(3.25)}, $\|F(u_k)-F(w_k)\|\to 0$ as $k\to\infty$. Thus $(iii)$ of Proposition $3.1$ also holds. 

From part $(iii)$ we know that $\|F(w_k)-v\|\to 0$ as $k\to\infty$. To this end, let us choose a sequence $\{k_n\}$ of integers which is strictly increasing by letting $k_0=0$ and $k_n$ be the first integer satisfying $k_n\geq k_{n-1}+1$ and $\|F(w_{k_n})-v\|\leq \|F(w_{k_{n-1}})-v\|.$ From this choice of sequence, for all $0\leq k<k_n$, we have that \begin{equation}\label{(3.27)}
\|F(w_{k_n})-v\|\leq \|F(w_{k})-v\|.
\end{equation}
Next, let us deduce an estimate for $\|L(w_k)(u_{k_n}-\hat{u})\|$ whenever $k<k_n$ which will be employed shortly. By using points $(5)$ and $(6)$ of assumptions in Subsection $3.1$ and \autoref{(3.27)}, we obtain  $$\|L(w_k)(u_{k_n}-\hat{u})\|\leq \|L(w_k)(u_{k_n}-w_k)\|+\|L(w_k)(w_k-\hat{u})\|\hspace{55mm}$$ $$\hspace{15mm}\leq \|F(u_{k_n})-F(w_k)-L(w_k)(u_{k_n}-w_k)\|+\|F(u_{k_n})-F(w_k)\|$$ $$ \hspace{10mm}+\ \|F(w_k)-v-L(w_k)(w_k-\hat{u})\|+\|F(w_k)-v\|$$
 $$\leq(\eta+1)\big(\|F(u_{k_n})-F(w_k)\|+\|F(w_k)-v\|\big)\hspace{13mm}$$ $$\leq(\eta+1)\big(\|F(u_{k_n})-v\|+2\|F(w_k)-v\|\big)\hspace{20mm}$$ $$\leq(\eta+1)\big(\|F(u_{k_n})-F(w_{k_n})\|+\|F(w_{k_n})-v\|+2\|F(w_k)-v\|\big)\hspace{-20mm}$$ \begin{equation}\label{(3.28)}
   \leq(\eta+1)\big(\|F(u_{k_n})-F(w_{k_n})\|+3\|F(w_k)-v\|\big)\hspace{9mm}\end{equation} 
For $k<k_n$ employ \autoref{E} and \autoref{(3.27)} to get $$\|F(u_{k_n})-F(w_{k_n})\|\leq \frac{C_0}{1-\eta} \bigg(p^* \frac{\vartheta_3\vartheta_5}{2c_0\zeta}\bigg)^{\frac{1}{p}}\|F(w_{k_n})-v\|\hspace{30mm}$$
\begin{equation*}\hspace{5mm}
\leq \frac{C_0}{1-\eta} \bigg(p^* \frac{\vartheta_3\vartheta_5}{2c_0\zeta}\bigg)^{\frac{1}{p}}\|F(w_{k})-v\|.
\end{equation*}
Substituting this in \autoref{(3.28)} to obtain \begin{equation}\label{(3.29)}
\|L(w_k)(u_{k_n}-\hat{u})\|\leq \bigg[\frac{C_0(1+\eta)}{1-\eta} \bigg(p^* \frac{\vartheta_3\vartheta_5}{2c_0\zeta}\bigg)^{\frac{1}{p}}+3(1+\eta)\bigg]\|F(w_{k})-v\|.
\end{equation}

Let us now start with the left hand side of the part $(iv)$ of Proposition $3.1$. From \autoref{(3.2)}, recall  that 
$$\gamma_{k+1}-\gamma_k=(1-\alpha_k)\lambda_k(\gamma_{k}-\gamma_{k-1})-\upsilon_k L(w_k)^*J_s^V(F(w_k)-v)+\alpha_k(\gamma_0-\gamma_k),$$ since $\gamma_0=\Im_0$.  From this, we have $$\big|\langle\gamma_{k_n}-\gamma_{k_l}, u_{k_n}-\hat{u}\rangle\big|=\sum_{k=k_l}^{k_n-1}\big|\langle \gamma_{k+1}-\gamma_k, u_{k_n}-\hat{u}\rangle\big|\hspace{60mm}$$ $$\hspace{10mm}\leq \sum_{k=k_l}^{k_n-1}(1-\alpha_k)\lambda_k\big|\langle \gamma_{k}-\gamma_{k-1}, u_{k_n}-\hat{u}\rangle\big|+\sum_{k=k_l}^{k_n-1}\upsilon_k\big|\langle L(w_k)^*J_s^V(F(w_k)-v),u_{k_n}-\hat{u}\rangle\big|$$ $$+\sum_{k=k_l}^{k_n-1}\alpha_k\big|\langle \gamma_0-\gamma_k, u_{k_n}-\hat{u}\rangle\big|$$  $$\hspace{10mm}\leq \sum_{k=k_l}^{k_n-1}(1-\alpha_k)\lambda_k\|\gamma_{k}-\gamma_{k-1}\|\  \|u_{k_n}-\hat{u}\|+\sum_{k=k_l}^{k_n-1}\upsilon_k\big|\langle J_s^V(F(w_k)-v),L(w_k)(u_{k_n}-\hat{u})\rangle\big|$$ $$+\sum_{k=k_l}^{k_n-1}\alpha_k\|\gamma_0-\gamma_k\| \ \|u_{k_n}-\hat{u}\|$$ $$\hspace{10mm}\leq \sum_{k=k_l}^{k_n-1}(1-\alpha_k)\lambda_k\|\gamma_{k}-\gamma_{k-1}\|\  \|u_{k_n}-\hat{u}\|+\sum_{k=k_l}^{k_n-1}\upsilon_k\|F(w_k)-v\|^{s-1}\|L(w_k)(u_{k_n}-\hat{u})\|$$ $$+\sum_{k=k_l}^{k_n-1}\alpha_k\|\gamma_0-\gamma_k\| \ \|u_{k_n}-\hat{u}\|,$$  where the  last inequality is obtained by using the property of $J_s^V$.   At this point, using the result that $u_{k_n}\in B(u_0, 2\epsilon)$, i.e. $u_{k_n}\in B(\hat{u}, 4\epsilon)$ and \autoref{(3.29)} in the last inequality to further reach at 
$$\big|\langle\gamma_{k_n}-\gamma_{k_l}, u_{k_n}-\hat{u}\rangle\big|\hspace{120mm}$$ \begin{equation}\label{(3.30)}
 \leq 4\epsilon\sum_{k=k_l}^{k_n-1}(1-\alpha_k)\lambda_k\|\gamma_{k}-\gamma_{k-1}\|\ +\vartheta_6\sum_{k=k_l}^{k_n-1}\upsilon_k\|F(w_k)-v\|^{s}+4\epsilon\sum_{k=k_l}^{k_n-1}\alpha_k\|\gamma_0-\gamma_k\|,\end{equation}
where $\vartheta_6=\frac{C_0(1+\eta)}{1-\eta} \big(p^* \frac{\vartheta_3\vartheta_5}{2c_0\zeta}\big)^{\frac{1}{p}}+3(1+\eta).$ From \autoref{C}, we know that $$\vartheta_5\upsilon_r\|F(w_r)-v\|^{s}-\frac{\vartheta_5\upsilon_r}{\zeta} \|F(w_r)-v\|^{s}\leq -\Theta_{r+1}.$$This with \autoref{(3.30)} gives the estimate $$\big|\langle\gamma_{k_n}-\gamma_{k_l}, u_{k_n}-\hat{u}\rangle\big|\hspace{120mm}$$ \begin{equation*}
 \leq 4\epsilon\sum_{k=k_l}^{k_n-1}(1-\alpha_k)\lambda_k\|\gamma_{k}-\gamma_{k-1}\|\ +4\epsilon\sum_{k=k_l}^{k_n-1}\alpha_k\|\gamma_0-\gamma_k\|\hspace{20mm}\end{equation*}\begin{equation*}
 \hspace{20mm}+\frac{\zeta \vartheta_6}{(\zeta-1)\vartheta_5}\sum_{k=k_l}^{k_n-1}
\big(D_{\gamma_k}\varphi(\hat{u}, u_k)-D_{\gamma_{k+1}}\varphi(\hat{u}, u_{k+1})\big) \end{equation*} \begin{equation*}
= 4\epsilon\sum_{k=k_l}^{k_n-1}(1-\alpha_k)\lambda_k\|\gamma_{k}-\gamma_{k-1}\|\ +4\epsilon\sum_{k=k_l}^{k_n-1}\alpha_k\|\gamma_0-\gamma_k\|\hspace{20mm}\end{equation*}\begin{equation*}
 \hspace{20mm}+\frac{\zeta \vartheta_6}{(\zeta-1)\vartheta_5}
\big(D_{\gamma_{k_l}}\varphi(\hat{u}, u_{k_l})-D_{\gamma_{k_n}}\varphi(\hat{u}, u_{k_n})\big). \end{equation*}
Hence, for the fixed $l$, we have $$\sup_{k\geq l}\big|\langle\gamma_{k_n}-\gamma_{k_l}, u_{k_n}-\hat{u}\rangle\big|\hspace{114mm}$$ \begin{equation*}
\leq 4\epsilon\sum_{k=k_l}^{k_n-1}(1-\alpha_k)\lambda_k\|\gamma_{k}-\gamma_{k-1}\|\ +4\epsilon\sum_{k=k_l}^{k_n-1}\alpha_k\|\gamma_0-\gamma_k\|
 +\frac{\zeta \vartheta_6}{(\zeta-1)\vartheta_5}
\big(D_{\gamma_{k_l}}\varphi(\hat{u}, u_{k_l})-\varrho)\end{equation*}
\begin{equation*}
\leq 4\epsilon\sum_{k=k_l}^{k_n-1}\lambda_k\|\gamma_{k}-\gamma_{k-1}\|\ +4\epsilon\sum_{k=k_l}^{k_n-1}\alpha_k\|\gamma_0-\gamma_k\|
 +\frac{\zeta \vartheta_6}{(\zeta-1)\vartheta_5}
\big(D_{\gamma_{k_l}}\varphi(\hat{u}, u_{k_l})-\varrho),\hspace{12mm} \end{equation*}
where we used the results that $\alpha_k\leq 1$ and $\lim_{k\to\infty}D_{\gamma_{k}}\varphi(\hat{u}, u_{k})=\varrho$ which is already discussed in the beginning of proof. Taking limit $l\to\infty$ in above and plug the estimates \autoref{F} and \autoref{D} to deduce that $$\lim_{l\to\infty}\sup_{k\geq l}\big|\langle\gamma_{k_n}-\gamma_{k_l}, u_{k_n}-\hat{u}\rangle\big|\leq \lim_{l\to\infty}\frac{\zeta \vartheta_6}{(\zeta-1)\vartheta_5}
\big(D_{\gamma_{k_l}}\varphi(\hat{u}, u_{k_l})-\varrho)=0.$$
Hence, $(iv)$ of Proposition $3.1$ also holds. Therefore, by the virtue of Proposition $3.1$ and \autoref{(2.5)}, desired result holds.
 \end{proof}

 Finally, we discuss our main result in which we show that our novel scheme \autoref{(3.2)} is a convergent regularization method if it is stopped via discrepancy principle \autoref{(3.4)}. In order to prove this, let us first  discuss a stability result which would be helpful in proving the main result.\begin{prop}
Let $V$ be uniformly smooth, $U$ be reflexive and assumptions of Subsection $3.1$ be satisfied. Further, let $\vartheta_5$ in $\autoref{(3.14)}$ be  positive and the combination parameters $\lambda_k^{\delta}$ satisfy $\autoref{(3.16)}$, $\autoref{(3.17)}$ and $\autoref{D}$. Moreover, let 
 $\lambda_k^{\delta}$  depend continuously on $\delta$ as $\delta\to 0$. Then for all $k\geq 0$ we have$$\Im_k^{\delta}\to \Im_k, \ \ \ \gamma_k^{\delta}\to\gamma_k,\ \ \ u_k^{\delta}\to u_k, \ \ \ w_k^{\delta}\to w_k\ \ \ \text{as}\ \ \  \delta\to 0.$$ 
 \end{prop}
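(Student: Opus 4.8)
The plan is to prove the statement by induction on the iteration index $k$, propagating the four convergences $\Im_k^{\delta}\to\Im_k$, $\gamma_k^{\delta}\to\gamma_k$, $u_k^{\delta}\to u_k$, $w_k^{\delta}\to w_k$ through the four lines of the recursion \autoref{(3.2)}. The base case $k=0$ is immediate: by the initialization $\gamma_{-1}^{\delta}=\gamma_0^{\delta}=\gamma_0$ for every $\delta$, so $\Im_0^{\delta}=\gamma_0=\Im_0$ irrespective of $\lambda_0^{\delta}$; then $w_0^{\delta}=\nabla\varphi^*(\Im_0^{\delta})=\nabla\varphi^*(\gamma_0)=u_0=w_0$ by \autoref{(2.7)}, and $u_0^{\delta}=u_0$ by construction. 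Throughout, I would invoke Proposition $3.4$, which guarantees $w_k^{\delta},w_k\in B(u_0,3\epsilon)$ and $u_k^{\delta},u_k\in B(u_0,2\epsilon)$ for every noise level (including $\delta=0$), so that all continuity and boundedness hypotheses of Subsection $3.1$ are available at the points where they are needed.

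For the inductive step, assume the four convergences hold at all levels $\le k$. First, $\Im_k^{\delta}\to\Im_k$ follows from $\gamma_k^{\delta}\to\gamma_k$, $\gamma_{k-1}^{\delta}\to\gamma_{k-1}$ and the assumed continuous dependence $\lambda_k^{\delta}\to\lambda_k$, using the first line of \autoref{(3.2)}. Next, $w_k^{\delta}=\nabla\varphi^*(\Im_k^{\delta})\to\nabla\varphi^*(\Im_k)=w_k$, since $\nabla\varphi^*$ is continuous on all of $U^*$ by the estimate \autoref{(2.6)}. Then $r_k^{\delta}=F(w_k^{\delta})-v^{\delta}\to F(w_k)-v=:r_k$, because $F$ is Lipschitz on $B(u_0,3\epsilon)$ (from the tangential cone condition together with $\|L(u)\|\le C_0$) and $\|v-v^{\delta}\|\le\delta\to 0$ by \autoref{1.2}. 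Writing the difference as $L(w_k^{\delta})^*\big(J_s^V(r_k^{\delta})-J_s^V(r_k)\big)+\big(L(w_k^{\delta})-L(w_k)\big)^*J_s^V(r_k)$ and using continuity of $u\mapsto L(u)$ (point $(5)$), the bound $\|L(u)\|\le C_0$ (point $(6)$), the identity $\|T^*\|=\|T\|$, and continuity of $J_s^V$ (uniform smoothness of $V$), one obtains $L(w_k^{\delta})^*J_s^V(r_k^{\delta})\to L(w_k)^*J_s^V(r_k)$. Granting (see the next paragraph) that $\upsilon_k^{\delta}L(w_k^{\delta})^*J_s^V(r_k^{\delta})\to\upsilon_k L(w_k)^*J_s^V(r_k)$, the third line of \autoref{(3.2)} gives $\gamma_{k+1}^{\delta}\to\gamma_{k+1}$, whence $u_{k+1}^{\delta}=\nabla\varphi^*(\gamma_{k+1}^{\delta})\to\nabla\varphi^*(\gamma_{k+1})=u_{k+1}$, again by continuity of $\nabla\varphi^*$; this closes the induction.

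The main obstacle is the convergence of the step-size term, since $\upsilon_k^{\delta}$ in \autoref{(3.3)} is defined piecewise, with the branch governed by whether $\|r_k^{\delta}\|>\tau\delta$, and the $\min$-expression contains a quotient whose denominator may vanish. I would distinguish two cases. If $r_k\neq 0$, then $\|r_k^{\delta}\|\to\|r_k\|>0$ while $\tau\delta\to 0$, so for small $\delta$ we are in the first branch, where $\upsilon_k^{\delta}$ is a continuous function of $(r_k^{\delta},w_k^{\delta},t_k^{\delta})$ with $t_k^{\delta}=\|\Im_k^{\delta}-\Im_0\|\to t_k$; in the degenerate subcase $L(w_k)^*J_s^V(r_k)=0$ the first argument of the $\min$ tends to $+\infty$, so the $\min$ still converges to $\upsilon_k$. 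Hence $\upsilon_k^{\delta}\to\upsilon_k$ and the product converges by the previous paragraph. If $r_k=0$, then $\upsilon_k=0$ and $J_s^V(r_k)=0$, so the limiting term is $0$; and either $\upsilon_k^{\delta}=0$ (so the term vanishes) or $\upsilon_k^{\delta}\le\vartheta_3\|r_k^{\delta}\|^{p-s}$, in which case $\big\|\upsilon_k^{\delta}L(w_k^{\delta})^*J_s^V(r_k^{\delta})\big\|\le C_0\,\upsilon_k^{\delta}\|r_k^{\delta}\|^{s-1}\le C_0\vartheta_3\|r_k^{\delta}\|^{p-1}\to 0$ because $p>1$. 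A minor bookkeeping point remains: the auxiliary quantities $\vartheta_{2,k}$ and $\alpha_k$ are defined in terms of noisy data through $t_k^{\delta}$ and $\|r_k^{\delta}\|$; to keep the recursion clean one takes them independent of $\delta$ (admissible by \autoref{A} whenever $\upsilon_k^{\delta}\neq 0$, and harmless when $\upsilon_k^{\delta}=0$ since the step-size term then vanishes) or consistent with the $\delta=0$ choices, so that $(1-\alpha_k)$ and the bound \autoref{A} pass to the limit without difficulty.
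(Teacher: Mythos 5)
Your proposal is correct and follows essentially the same route as the paper: induction on $k$, continuity of $\nabla\varphi^*$, $F$, $L$, and $J_s^V$ to propagate the convergences through \autoref{(3.2)}, and a case distinction on whether $r_k=0$ (with the further subcase $L(w_k)^*J_s^V(r_k)=0$, where the first argument of the $\min$ in \autoref{(3.3)} blows up and the second branch is selected) to handle the step size; the $r_k=0$ case is closed by the same bound $\upsilon_k^{\delta}\|L(w_k^{\delta})^*J_s^V(r_k^{\delta})\|\leq C_0\vartheta_3\|r_k^{\delta}\|^{p-1}\to 0$. You actually supply a few details the paper leaves implicit (the decomposition proving $L(w_k^{\delta})^*J_s^V(r_k^{\delta})\to L(w_k)^*J_s^V(r_k)$, and the remark on the $\delta$-dependence of $\vartheta_{2,k}$ and $\alpha_k$).
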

 \begin{proof}
 For $k=0$, there is nothing to prove as the result holds trivially. Let us proceed with the induction and assume that  the result holds for all $0\leq k\leq n$. We need to show it for $k=n+1$ for which we consider the following two cases:
 
 Case-1: $r_n\neq 0$. Clearly, in this case for small $\delta>0$, we have $\|F(w_n)- v\|> \tau\delta$. Therefore, \autoref{(3.3)} becomes \begin{equation*}
\upsilon_n^{\delta}=\min\left\{
\dfrac{\dfrac{1}{2}\bigg(\vartheta_1^{p^*-1}
\|r_n^{\delta}\|^{s}-\vartheta_{2, n}(t_n^{\delta})^{p^*}\bigg)^{\frac{1}{p^*-1}}}{\|
L(w_n^{\delta})^*J_s^V(r_n^{\delta})\|^{p}}, \vartheta_3\|r_n^{\delta}\|^{p-s} \right\},
\end{equation*}  \begin{equation*}
\upsilon_n=\min\left\{
\dfrac{\dfrac{1}{2}\bigg(\vartheta_1^{p^*-1}
\|r_n\|^{s}-\vartheta_{2, n}(t_n)^{p^*}\bigg)^{\frac{1}{p^*-1}}}{\|
L(w_n)^*J_s^V(r_n)\|^{p}}, \vartheta_3\|r_n\|^{p-s} \right\}.
\end{equation*}
Again, we have two possibilities here. First one is if $L(w_n)^*J_s^V(r_n)=L(w_n)^*J_s^V(F(w_n)-v)\neq 0$. By induction hypothesis on $w_n^{\delta}$ it is easy to deduce that $\upsilon_n^{\delta}\to\upsilon_n$. Consequently,  due to induction hypothesis and  incorporating the continuity of $L$, $F$, $J_s^V$ and $\nabla\varphi^*$, we conclude that whenever $\delta\to 0$, $\Im_{n+1}^{\delta}\to \Im_{n+1}$,    $\gamma_{n+1}^{\delta}\to\gamma_{n+1}$, $u_{n+1}^{\delta}\to u_{n+1}$,  $w_{n+1}^{\delta}\to w_{n+1}$.  

Second possibility is if $L(w_n)^*J_s^V(r_n)=0$. In this case, for small $\delta>0$ we have 
 $$\upsilon_n^{\delta}=\vartheta_3\|r_n^{\delta}\|^{p-s}, \ \ \upsilon_n=\vartheta_3\|r_n\|^{p-s}.$$
 This means that $\upsilon_n^{\delta}\to\upsilon_n$. Rest part of the proof is similar to that of first possibility.

Case-2: $r_n=0$. Here, $\upsilon_n=0$. Consequently by the induction hypothesis $w_n^{\delta}\to w_n$ and   continuity of $F$, $\|F(w_n^{\delta})-v^{\delta}\|\to 0$ as $r_n=0$. Therefore, from \autoref{(3.2)} we have $$\gamma_{n+1}^{\delta}-\gamma_{n+1}=(1-\alpha_{n})(\Im_{n}^{\delta}-\Im_{n})-\upsilon_n^{\delta}L(w_n^{\delta})J_s^V(r_n^{\delta}).$$
 This with the induction hypothesis $\Im_{n}^{\delta}\to\Im_{n}$, point $(6)$ of assumptions in Subsection $3.1$ and definition of $\upsilon_n^{\delta}$ lead to the estimate $$\|\gamma_{n+1}^{\delta}-\gamma_{n+1}\|\leq (1-\alpha_{n})\|\Im_{n}^{\delta}-\Im_{n}\|+\upsilon_n^{\delta}C_0\|J_s^V(r_n^{\delta})\|\hspace{24mm}$$ $$\leq (1-\alpha_{n})\|\Im_{n}^{\delta}-\Im_{n}\|+\upsilon_n^{\delta}C_0\|r_n^{\delta}\|^{s-1}$$ $$\leq (1-\alpha_{n})\|\Im_{n}^{\delta}-\Im_{n}\|+\vartheta_3C_0\|r_n^{\delta}\|^{p-1} \to 0 \ \text{as}\ \delta\to 0.\hspace{-28mm}$$ Therefore $\gamma_{n+1}^{\delta}\to \gamma_{n+1}$. Also, we know that $$u_{n+1}^{\delta}=\nabla\varphi^*(\gamma_{n+1}^{\delta}), \ \ u_{n+1}=\nabla\varphi^*(\gamma_{n+1}).$$ Since $\gamma_{n+1}^{\delta}\to \gamma_{n+1}$, due to continuity of $\nabla\varphi^*$, we have that $u_{n+1}^{\delta}\to u_{n+1}$ as $\delta\to 0$. Due to \autoref{(3.2)} we deduce that $$\Im_{n+1}^{\delta}=\gamma_{n+1}^{\delta}+\lambda_{n+1}^{\delta}(\gamma_{n+1}^{\delta}-\gamma_{n}^{\delta})\to \Im_{n+1}\ \text{as}\ \delta\to 0,$$
where we used the assumption that  $\lambda_{n+1}^{\delta}$ depends continuously on $\delta$. 
 Finally, as $$w_{n+1}^{\delta}=\nabla\varphi^*(\Im_{n+1}^{\delta}), \ \ w_{n+1}=\nabla\varphi^*(\Im_{n+1}),$$ we have that  $\Im_{n+1}^{\delta}\to \Im_{n+1}$, due to continuity of $\nabla\varphi^*$. Thus, result.
 \end{proof}
We are now ready to give final main result of this section which renders our method \autoref{(3.2)} a regularizing one.
 \begin{thm}
Let $V$ be uniformly smooth, $U$ be reflexive and assumptions of Subsection $3.1$ be satisfied. Further, let $\vartheta_5$ in $\autoref{(3.14)}$ be positive and the combination parameters $\lambda_k^{\delta}$ satisfy $\autoref{(3.16)}$, $\autoref{(3.17)}$ and $\autoref{D}$. Moreover, let 
 $\lambda_k^{\delta}$  depend continuously on $\delta$ as $\delta\to 0$ and $\alpha_k$ satisfy $\autoref{F}$. Let the stopping rule $k_{\delta}$  be chosen such that $\autoref{(3.4)}$ be satisfied. Then there exists a solution $\bar{u}$ which satisfy $\autoref{1.1}$ in $B(u_0, 2\epsilon)\cap D(\varphi)$ such  that $$\lim_{\delta\to0}\mathcal{D}_{\gamma_{k_{\delta}}^{\delta}}\varphi(\bar{u}, u_{k_\delta}^{\delta})=0, \ \text{and}\ \ \lim_{\delta\to0}\|u_{k_{\delta}}^{\delta}-\bar{u}\|=0.$$
 \end{thm}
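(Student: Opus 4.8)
The plan is to run the standard argument that a Landweber-type iteration stopped by the discrepancy principle is a convergent regularization method, but with the Bregman distance $\mathcal D\varphi$ in place of the squared norm. Fix an arbitrary null sequence $\delta_n\to 0$ and data $v^{\delta_n}$ with $\|v-v^{\delta_n}\|\le\delta_n$, and let $k_n:=k_{\delta_n}$ be the stopping indices from \autoref{(3.4)}, each finite by Proposition $3.5$. Let $\bar u$ be the solution produced by Theorem $3.1$ for the exact-data iteration; it belongs to $B(u_0,2\epsilon)\cap D(\varphi)$, satisfies $F(\bar u)=v$, and $\mathcal D_{\gamma_k}\varphi(\bar u,u_k)\to 0$. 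Theorem $3.1$ does apply here, since $\vartheta_5>0$ forces $\vartheta_6>0$ (compare \autoref{(3.14)} and \autoref{(3.23)}), and \autoref{F}, \autoref{D} are assumed. It suffices to show $\mathcal D_{\gamma^{\delta_n}_{k_n}}\varphi(\bar u,u^{\delta_n}_{k_n})\to 0$, because \autoref{(2.5)} then gives $\|u^{\delta_n}_{k_n}-\bar u\|\to 0$ and, $\{\delta_n\}$ being arbitrary, the conclusions for $\delta\to0$ follow. I would then distinguish whether $\{k_n\}$ has a bounded subsequence.

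If a subsequence has $k_n\equiv k$ fixed, then \autoref{(3.4)} forces $\|F(w^{\delta_n}_k)-v^{\delta_n}\|\le\tau\delta_n\to 0$. All hypotheses of the stability result Proposition $3.6$ are in force, so $w^{\delta_n}_k\to w_k$, $u^{\delta_n}_k\to u_k$ and $\gamma^{\delta_n}_k\to\gamma_k$, the exact-data iterates of \autoref{(3.2)}. By continuity of $F$, $F(w_k)=v$, and then \autoref{E} yields $F(u_k)=v$; thus $u_k$ is a solution in $B(u_0,2\epsilon)\cap D(\varphi)$ by Proposition $3.4$. Applying the monotonicity $\Theta_j\le 0$ of Proposition $3.4$ to the exact-data iteration with $\hat u=u_k$ gives $\mathcal D_{\gamma_j}\varphi(u_k,u_j)\le\mathcal D_{\gamma_k}\varphi(u_k,u_k)=0$ for $j\ge k$, hence $u_j=u_k$ for all $j\ge k$ by \autoref{(2.5)}, so in particular $\bar u=u_k$. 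Therefore $u^{\delta_n}_{k_n}=u^{\delta_n}_k\to u_k=\bar u$, and using continuity of $\varphi^*$ and $\nabla\varphi^*$ (\autoref{(2.6)}) in the representation \autoref{(2.2)} gives $\mathcal D_{\gamma^{\delta_n}_k}\varphi(\bar u,u^{\delta_n}_k)\to\mathcal D_{\gamma_k}\varphi(\bar u,u_k)=0$.

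If instead $k_n\to\infty$, fix $\varepsilon>0$ and, using $\mathcal D_{\gamma_k}\varphi(\bar u,u_k)\to0$, choose $m$ with $\mathcal D_{\gamma_m}\varphi(\bar u,u_m)<\varepsilon$. By Proposition $3.6$ at the fixed index $m$, together with the three-point identity \autoref{(2.3)} and continuity of $\varphi^*$, $\nabla\varphi^*$, we obtain $\mathcal D_{\gamma^{\delta_n}_m}\varphi(\bar u,u^{\delta_n}_m)<2\varepsilon$ for all large $n$. Since $k_n\to\infty$ we have $k_n>m$ eventually, and since $\bar u$ is a solution in $B(u_0,2\epsilon)\cap D(\varphi)$, chaining the monotonicity $\Theta_j\le 0$ of Proposition $3.4$ over $m\le j<k_n$ gives $\mathcal D_{\gamma^{\delta_n}_{k_n}}\varphi(\bar u,u^{\delta_n}_{k_n})\le\mathcal D_{\gamma^{\delta_n}_m}\varphi(\bar u,u^{\delta_n}_m)<2\varepsilon$. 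Letting $n\to\infty$ and then $\varepsilon\to0$ gives $\mathcal D_{\gamma^{\delta_n}_{k_n}}\varphi(\bar u,u^{\delta_n}_{k_n})\to0$, which finishes this case.

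The main obstacle is this last case: one has to transport the exact-data convergence of Theorem $3.1$ to the noisy iterates evaluated at the $\delta$-dependent, unbounded index $k_\delta$, with no uniformity in $k$ available. The mechanism that makes it work is the order in which the three ingredients are used --- first select $m=m(\varepsilon)$ from Theorem $3.1$; then, with $m$ frozen, invoke the stability Proposition $3.6$ to control $\mathcal D_{\gamma_m^\delta}\varphi(\bar u,u_m^\delta)$ for small $\delta$; and only then use the Bregman-distance monotonicity of Proposition $3.4$ to push the bound from index $m$ up to $k_\delta$ --- so I would be careful to present the quantifiers in precisely that sequence. A secondary point, already dealt with in the bounded case above, is that the limit there must be shown to coincide with the single solution $\bar u$ of Theorem $3.1$ rather than being merely ``some'' solution; this is exactly where the exact-data monotonicity is used once more, forcing the iteration to freeze (first in $\mathcal D\varphi$, then in norm by $p$-convexity) once it reaches a solution, which is what lets the statement hold with one $\bar u$ as $\delta\to0$.
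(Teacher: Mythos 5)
Your proposal is correct and follows essentially the same route as the paper: the same case split on whether $\{k_\delta\}$ has a bounded subsequence, the same use of the discrepancy principle plus the stability result (Proposition $3.6$) to identify the limit with $\bar u$ in the bounded case (including the "iteration freezes after reaching a solution" argument), and the same order of quantifiers --- Theorem $3.1$ first, then stability at a frozen index, then Bregman monotonicity up to $k_\delta$ --- in the unbounded case. The only cosmetic difference is that you pass to the limit of the Bregman distances via continuity of $\varphi^*$ and $\nabla\varphi^*$ in the conjugate representation \autoref{(2.8)} where the paper invokes lower semicontinuity of $\varphi$ directly; both are fine.
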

 \begin{proof}
Due to Theorem $3.1$, let $\bar{u}$ be the limit of the iterates $\{u_k\}$ of \autoref{(3.2)} with precise data and $\{\delta_m\}$ be a zero sequence. Let us denote the  sequence corresponding to the perturbed data by  $\{v^{\delta_m}\}$ and $k_m=k_*(\delta_m, v^{\delta_m})$ be the stopping rule chosen according to $\autoref{(3.4)}$. Clearly, we have two cases here. First, assume that the sequence $\{k_m\}$ has a finite  accumulation point given by $K$. Without loss of generality, take $k_m = K$ for all $m\in \mathbb{N}$. Then, from \autoref{(3.4)} it follows that $$\|F(w_K^{\delta_m})-v^{\delta_m}\|\leq \tau\delta_m.$$ Further, using  the same formulation via which we deduced  \autoref{E}, we can obtain  $$\|F(u_K^{\delta_m})-F(w_K^{\delta_m})\|\leq\vartheta_7 \|F(w_K^{\delta_m})-v^{\delta_m}\|,$$ for some constant $\vartheta_7$. Combining the last two inequalities yields $$\|F(u_K^{\delta_m})-v^{\delta_m}\|\leq \|F(u_K^{\delta_m})-F(w_K^{\delta_m})\|+\|F(w_K^{\delta_m})-v^{\delta_m}\|$$
$$\hspace{3mm} \leq(1+\vartheta_7)\|F(u_K^{\delta_m})-v^{\delta_m}\|$$ $$\leq (1+\vartheta_7)\tau\delta_m.\hspace{16mm}$$Take $m\to\infty$ which gives $\lim_{m\to\infty}\|F(u_K^{\delta_m})-v^{\delta_m}\|\leq 0$. The continuity of $F$ and $u_K^{\delta_m} \to u_K$ (see Proposition $3.6$)  implies that $F(u_K)=v$. This means that $u_K$ is a solution of \autoref{1.1} in $B(u_0, 2\epsilon)\cap D(\varphi)$. Now as the sequence $\{\mathcal{D}_{\gamma_k}\varphi(u_K, u_k)\}$ is monotonic with respect to $k$, we have $$\mathcal{D}_{\gamma_k}\varphi(u_K, u_k)\leq \mathcal{D}_{\gamma_k}\varphi(u_K, u_K)=0\ \forall\ k\geq K.$$Hence, $u_k=u_K$ for all $k\geq K$. But since $u_k\to \bar{u}$ due to Theorem $3.1$, we have that $u_K=\bar{u}$. By the lower semi-continuity of $\varphi$, note that $$0\leq \liminf_{m\to\infty}\mathcal{D}_{\gamma_{k_m}^{\delta_m}}\varphi(\bar{u}, u_{k_m}^{\delta_m})\hspace{43mm}$$ $$\leq \limsup_{m\to\infty}\mathcal{D}_{\gamma_{k_m}^{\delta_m}}\varphi(\bar{u}, u_{k_m}^{\delta_m})\hspace{38mm} $$ $$\hspace{1mm}\leq\varphi(\bar{u})-\liminf_{m\to\infty}\varphi(u_{k_m}^{\delta_m})-\lim_{m\to\infty}\langle \gamma_{k_m}^{\delta_m}, \bar{u}-u_{k_m}^{\delta_m}\rangle$$ $$\hspace{3mm}\leq \varphi(\bar{u})-\varphi(\bar{u})=0.\hspace{48mm}$$
This means that\begin{equation}\label{M}
 \lim_{m\to\infty}\mathcal{D}_{\gamma_{k_m}^{\delta_m}}\varphi(\bar{u}, u_{k_m}^{\delta_m})=0.\hspace{20mm}\end{equation}This completes the first case. 
  
 Next,  we consider the possibility that the sequence $\{k_m\}$ has no finite accumulation point. Let us fix some  integer $k$, then  $k_m>k$ for some large $m$. Now utilizing Proposition $3.4$ to deduce that $$\mathcal{D}_{\gamma_{k_m}^{\delta_m}}\varphi(\bar{u}, u_{k_m}^{\delta_m})\leq  \mathcal{D}_{\gamma_{k}^{\delta_m}}\varphi(\bar{u}, u_{k}^{\delta_m}) = \varphi(\bar{u})-\varphi(u_{k}^{\delta_m})-\langle \gamma_{k}^{\delta_m}, \bar{u}-u_{k}^{\delta_m}\rangle.$$
At this point, incorporate Proposition $3.6$ to see that $$\gamma_{k}^{\delta_m}\to \gamma_{k}\ \ \text{and}\ \ u_{k}^{\delta_m}\to u_{k}\ \ \text{as}\ \ m\to\infty.$$
With this and the semi-continuity of $\varphi$, observe   that$$0\leq \liminf_{m\to\infty}\mathcal{D}_{\gamma_{k_m}}^{\delta_m}\varphi(\bar{u}, u_{k_m}^{\delta_m})\hspace{42mm}$$ $$\leq \limsup_{m\to\infty}\mathcal{D}_{\gamma_{k_m}}^{\delta_m}\varphi(\bar{u}, u_{k_m}^{\delta_m})\hspace{38mm} $$ $$\leq\varphi(\bar{u})-\liminf_{m\to\infty}\varphi(u_{k}^{\delta_m})-\lim_{m\to\infty}\langle \gamma_{k}^{\delta_m}, \bar{u}-u_{k_m}^{\delta_m}\rangle$$ $$\leq \varphi(\bar{u})-\varphi(u_k)-\langle \gamma_{k}, \bar{u}-u_{k}\rangle=\mathcal{D}_{\gamma_{k}}\varphi(\bar{u}, u_k).$$
Since $k$ can be arbitrary, take limit $k\to \infty$ in above and employ Theorem $3.1$ to deduce that \autoref{M} holds. Hence, the proof is complete.
 \end{proof}
 
\section{Choice of combination parameters $\lambda_k^{\delta}$ and advantages of novel scheme}
In this section, we discuss the various possible choices of  $\lambda_k^{\delta}$ which can be utilized in our scheme {and advantages of our scheme in comparison to \autoref{(1.7)}}. 
Let us recall that $\lambda_k^{\delta}$ must satisfy \autoref{(3.16)} and \autoref{(3.17)} and $\lambda_k$ satisfy \autoref{D}. Obviously, with   $\lambda_k^{\delta}=0$ \autoref{(3.16)} holds and this choice corresponds to iteratively regularized Landweber iteration method \autoref{(1.4)}. However, to have some acceleration for the  scheme \autoref{(3.2)} similar to various Landweber iterations \cite{Hub, Zhong}, non-trivial combination parameters $\lambda_k^{\delta}$ are required.

  To see this, observe that if $\|r_k^{\delta}\|\leq \tau\delta$, then $\upsilon_k^{\delta}=0$ which means $\lambda_k^{\delta}=0$ due to \autoref{(3.16)}. So, the only possibility we need to consider is $\|r_k^{\delta}\|> \tau\delta$. In this situation, \autoref{(3.16)} and \autoref{G} imply the following sufficient condition  on the combination parameters to satisfy \autoref{(3.16)}:\begin{equation}\label{M}
\frac{\lambda_k^{\delta}+(\lambda_{k}^{\delta})^{p^*}}{p^*(2c_0)^{p^*-1}} \|\gamma_{k}^{\delta}-\gamma_{k-1}^{\delta}\|^{p^*}
\leq \frac{\vartheta_5}{\zeta} \min\left\{
\dfrac{(\vartheta_1^{p^*-1}-\bar{\vartheta_2}^{p^*-1})^{\frac{1}{p^*-1}}}{2C_0^p}, \vartheta_3 \right\}(\tau\delta)^{p}.
\end{equation}
Recall that for Nesterov’s acceleration strategy \cite{Nest}, $\lambda_k^{\delta}=\frac{k}{k+\varsigma},$ where $\varsigma\geq 3$. So, by placing the requirement $0\leq \lambda_k^{\delta}\leq \frac{k}{k+\varsigma}<1$ for $p>1$, \autoref{M} leads to the estimate \begin{equation*}
\frac{2\lambda_k^{\delta}}{p^*(2c_0)^{p^*-1}} \|\gamma_{k}^{\delta}-\gamma_{k-1}^{\delta}\|^{p^*}
\leq \frac{\vartheta_5}{\zeta} \min\left\{
\dfrac{(\vartheta_1^{p^*-1}-\bar{\vartheta_2}^{p^*-1})^{\frac{1}{p^*-1}}}{2C_0^p}, \vartheta_3 \right\}(\tau\delta)^{p},
\end{equation*}
 since $(\lambda_{k}^{\delta})^{p^*}<1$. This further means \begin{equation*}
\lambda_k^{\delta} 
\leq \frac{p^*(2c_0)^{p^*-1}\vartheta_5}{2\zeta\|\gamma_{k}^{\delta}-\gamma_{k-1}^{\delta}\|^{p^*}}\min\left\{
\dfrac{(\vartheta_1^{p^*-1}-\bar{\vartheta_2}^{p^*-1})^{\frac{1}{p^*-1}}}{2C_0^p}, \vartheta_3 \right\}(\tau\delta)^{p}.
\end{equation*}
Combining above and $0\leq \lambda_k^{\delta}\leq \frac{k}{k+\varsigma}$, we can select \begin{equation}\label{H}
 \lambda_k^{\delta}= \min\left\{\frac{\vartheta_6\delta^p}{\|\gamma_{k}^{\delta}-\gamma_{k-1}^{\delta}\|^{p^*}},\ \frac{k}{k+\varsigma} \right\}, \ \text{where}\end{equation} $$\vartheta_6=\frac{p^*(2c_0)^{p^*-1}\vartheta_5\tau^{p}}{2\zeta}
\min\left\{
\dfrac{(\vartheta_1^{p^*-1}-\bar{\vartheta_2}^{p^*-1})^{\frac{1}{p^*-1}}}{2C_0^p}, \vartheta_3 \right\}.$$
However, one can see that the choice of combination parameters in \autoref{H} may approach to $0$ whenever $\delta\to 0$. This ultimately implies that the required acceleration effect may also decreases. Therefore, for the sake of completeness and to show the worthness of our posposed scheme, we recall another strategy well known as \emph{discrete backtracking search} (DBTS) algorithm discussed in \cite{Hub, Zhong} to find $\lambda_k^{\delta}$. This strategy fits properly in our situation.

To this end, let us briefly mention some of the used notations which will be used in the following Algorithm $4.1$. $\vartheta_5$ is given by \autoref{(3.14)}, $\zeta$ in \autoref{(3.16)}, $\gamma_k^{\delta}, \gamma_{k-1}^{\delta}$ in \autoref{(3.2)}, $\tau$ in \autoref{(3.4)}. Let $h:\mathbb{N}\cup\{0\}\to (0, \infty)$ be a non increasing function such that $\sum_{k=0}^{\infty}h(k)<\infty.$ This choice of $h$ is needed to satisfy the requirement \autoref{D}.
\vspace{2mm}
\hrule
\raisebox{.5ex}{\rule{2cm}{.4pt}}
\begin{algo}$($DBTS algorithm$)$\vspace{1mm}
\hrule
\raisebox{.5ex}{\rule{12cm}{.1pt}}
\begin{itemize}
\item \textbf{Given} $\gamma_k^{\delta}, \gamma_{k-1}^{\delta},  \tau, \delta, \vartheta_5, h:\mathbb{N}\to \mathbb{N}, \zeta, i_{k-1}^{\delta}\in \mathbb{N}, j_{\max}\in \mathbb{N}$ 
\item \textbf{Set} $\vartheta_7=\frac{p^*(2c_0)^{p^*-1}\vartheta_5}{\zeta}$\ \ \ $($cf. \ $\autoref{(3.16)})$
\item \textbf{Calculate} $\|\gamma_k^{\delta}- \gamma_{k-1}^{\delta}\|$ and define $$\pi_k(i)=\min\bigg\{\frac{h(i)}{\|\gamma_k^{\delta}- \gamma_{k-1}^{\delta}\|}, \ \frac{p^*(2c_0)^{p^*}\epsilon^p}{4\|\gamma_k^{\delta}- \gamma_{k-1}^{\delta}\|^{p^*}}, \ \frac{k}{k+\varsigma}\bigg\},$$
for $\varsigma\geq 3$.
\item \textbf{For} $j=1, 2, \hdots, j_{\max}$

Set $\lambda_k^{\delta}=\pi_k( i_{k-1}^{\delta}+j)$;

Calculate $\Im_k^{\delta}=\gamma_k^{\delta}+\lambda_k^{\delta}(\gamma_k^{\delta}-\gamma_{k-1}^{\delta})$ and $w_k^{\delta}=\nabla\varphi^*(\Im_k^{\delta})$;

Calculate $\upsilon_k^{\delta}$ from $\autoref{(3.3)}$;

\begin{itemize}
\item \textbf{If} $\|F(w_k^{\delta}-v^{\delta}\|\leq \tau \delta$

    $\lambda_k^{\delta}=0$;
    
    $i_k^{\delta}=i_{k-1}^{\delta}+j$;
    
    \textbf{break};
\item \textbf{Else if}   $\autoref{(3.16)}$ holds with $\vartheta_7$ chosen above; 

     $i_k^{\delta}=i_{k-1}^{\delta}+j$;
    
    \textbf{break};
\item \textbf{Else}

     calculate $\lambda_k^{\delta}$ by $\autoref{H}$ 

     $i_k^{\delta}=i_{k-1}^{\delta}+j_{\max}$;
\item \textbf{End if}
\end{itemize}
\item \textbf{End for}
\item \textbf{Output} $\lambda_k^{\delta}$, $i_k^{\delta}$.
\end{itemize}
\end{algo}
\hrule
\raisebox{.5ex}{\rule{2cm}{.4pt}}

The choice of $\pi_k$ in Algorithm $4.1$ will be  cleared shortly in our further discussion. 
From Algorithm $4.1$, it is not hard to see that the combination parameters $\lambda_k^{\delta}$ satisfy \autoref{(3.16)}. This is because the algorithm has the following $3$ possible outputs.\begin{enumerate}
\item $\lambda_k^{\delta}=0$ trivially satisfy
 \autoref{(3.16)}.
\item  $\lambda_k^{\delta}$ satisfy \autoref{(3.16)} in case $\|F(w_k^{\delta}-v^{\delta}\|> \tau \delta$.
\item $\lambda_k^{\delta}$ calculated via \autoref{H} again chosen to satisfy \autoref{(3.16)}. 
\end{enumerate}
However, nothing can be said about the continuous dependence of $\lambda_k^{\delta}$  obtained via Algorithm $4.1$ on $\delta$, whenever $\delta\to 0$. Therefore, Proposition $3.6$ and Theorem $3.2$ are not applicable in this situation. 

Possibly, when $\delta\to 0$, the sequence $\{\lambda_k^{\delta}\}$ may have  many different cluster points. This means in the noise free case, we can have different iterative sequences given by \autoref{(3.2)} by using  different cluster points as the combination parameters. Therefore,  we need to altogether consider these sequences. To this end, let $$\Xi=\{(\Im_k, \gamma_k, u_k, w_k)\in (U^*)^2\times U^2\},$$be the set of sequences defined by \autoref{(2.3)} for $\delta=0$ such that $\{\lambda_k\}$ associated with these sequences  satisfy \autoref{(3.16)} (with $\delta=0$) and \begin{equation}\label{I}
0\leq \lambda_k \leq \pi_k(i_k),\ \ 1\leq i_k-i_{k-1}\leq j_{\max}, \ i_0=0,
\end{equation}
where $\pi_k(i_k)$ are same as in Algorithm $4.1$.

Next, we show that with these choice of parameters $\{\lambda_k\}$, Theorem $3.1$ is applicable. Note  that \autoref{(3.16)} holds trivially.
 Due to \autoref{I}, we have $$0\leq \lambda_k\leq \frac{p^*(2c_0)^{p^*}\epsilon^p}{4\|\gamma_k- \gamma_{k-1}\|^{p^*}}\ \ \text{and}\ \ 0\leq \lambda_k\leq\frac{k}{k+\varsigma}<1.$$ This with the left side of \autoref{(3.17)} implies that$$ \frac{\lambda_k+(\lambda_{k})^{p^*}}{p^*(2c_0)^{p^*-1}} \|\gamma_{k}-\gamma_{k-1}\|^{p^*}\leq \frac{2\lambda_k}{p^*(2c_0)^{p^*-1}} \|\gamma_{k}-\gamma_{k-1}\|^{p^*} \leq c_0 \epsilon^p.$$Thus, \autoref{(3.17)} holds. Now, we talk about \autoref{D}. To see this, from \autoref{I}, we have $\lambda_k\leq \frac{h(i_k)}{\|\gamma_k- \gamma_{k-1}\|}$. Consider  the left hand side of \autoref{D} to obtain $$\sum_{k=0}^{\infty}\lambda_k\|\gamma_k-\gamma_{k-1}\|\leq \sum_{k=0}^{\infty}h(i_k).$$ Since $h$ is a non-increasing function and  $i_k\geq k$, we have $h(i_k)\leq h(k)$. Thus, \autoref{D} is satisfied provided $\sum_{k=0}^{\infty}h(k)<\infty$ which already holds for the function $h$. So, Theorem $3.1$ is applicable for these choices of parameters.

Finally, we discuss the two results related to the incorporation of DBTS algorithm in our scheme. A stability  result similar to Proposition $3.6$ without requiring the continuous dependence of $\lambda_k^{\delta}$ on $\delta$   also holds for the combination parameters chosen via DBTS algorithm. Its proof can be developed on the similar lines of \cite[Lemma $3.9$]{Zhong}.
Using this stability result, one can show the regularizing nature of the method \autoref{(3.2)}, 
whenever combination parameters are chosen via DBTS algorithm. We again skip its proof as it can be  developed on the similar lines of \cite[Theorem $3.10$]{Zhong}. However, for the sake of completeness, we give both the results in the following proposition and theorem.
\begin{prop}
Let $V$ be uniformly smooth, $U$ be reflexive and assumptions of Subsection $3.1$ be satisfied.
Let the noisy data $\{v^{\delta_m}\}$ be such that $\autoref{1.2}$ holds and $\delta_m\to 0$ as $m\to\infty$. Further, assume that $\vartheta_5$ in $\autoref{(3.14)}$ is positive and the combination parameters $\lambda_k^{\delta_m}$ are deduced via Algorithm $4.1$ and $\lambda_0^{\delta_m}=0$. Then by taking a subsequence of $\{v^{\delta_m}\}$, there exists a sequence $\{(\Im_k, \gamma_k, u_k, w_k)\}\in \Xi_m$ such that  for all $k\geq 0$ we have$$\Im_k^{\delta_m}\to \Im_k, \ \ \ \gamma_k^{\delta_m}\to\gamma_k,\ \ \ u_k^{\delta_m}\to u_k, \ \ \ w_k^{\delta_m}\to w_k\ \ \ \text{as}\ \ \  m\to \infty.$$ 
\end{prop}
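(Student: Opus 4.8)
The plan is to run the same induction on $k$ as in Proposition $3.6$, replacing the assumed continuity in $\delta$ of the combination parameters by a pigeonhole argument applied to the discrete bookkeeping produced by Algorithm $4.1$, and to finish with a diagonal extraction; the purely analytic parts are those of \cite[Lemma 3.9]{Zhong}. For $k=0$ there is nothing to show: since $\lambda_0^{\delta_m}=0$ and $\gamma_{-1}^{\delta_m}=\gamma_0^{\delta_m}=\gamma_0$, we get $\Im_0^{\delta_m}=\gamma_0$, $w_0^{\delta_m}=\nabla\varphi^*(\gamma_0)=u_0$ and $u_0^{\delta_m}=u_0$, all independent of $m$.

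Assume the four convergences hold, along some subsequence of $\{\delta_m\}$ which we do not relabel, for every $0\le k\le n$. First I would reproduce the argument of Proposition $3.6$: splitting into the cases $r_n\ne 0$ and $r_n=0$, using $v^{\delta_m}\to v$, the continuity of $F$, $L$, $J_s^V$, $\nabla\varphi^*$ and the induction hypothesis to pass to the limit in the formula for $\gamma_{n+1}^{\delta_m}$ in \autoref{(3.2)}, and in the degenerate case $r_n=0$ bounding $\upsilon_n^{\delta_m}L(w_n^{\delta_m})^*J_s^V(r_n^{\delta_m})$ by $\vartheta_3C_0\|r_n^{\delta_m}\|^{p-1}\to 0$; this gives $\gamma_{n+1}^{\delta_m}\to\gamma_{n+1}$ and $u_{n+1}^{\delta_m}=\nabla\varphi^*(\gamma_{n+1}^{\delta_m})\to u_{n+1}$. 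The genuinely new point is the passage to the limit in $\lambda_{n+1}^{\delta_m}$. The counters of Algorithm $4.1$ obey $i_0^{\delta_m}=0$ and $1\le i_k^{\delta_m}-i_{k-1}^{\delta_m}\le j_{\max}$, so $i_n^{\delta_m}$ and $i_{n+1}^{\delta_m}$ take only finitely many values, and for each $m$ the inner loop $j=1,\dots,j_{\max}$ follows one of finitely many branch trajectories; hence along a further subsequence $i_n^{\delta_m}$, $i_{n+1}^{\delta_m}$, the terminating index $j$ and the branch taken are all independent of $m$. Since every output satisfies $0\le\lambda_{n+1}^{\delta_m}\le\frac{n+1}{n+1+\varsigma}<1$, a last extraction gives $\lambda_{n+1}^{\delta_m}\to\lambda_{n+1}$ for some $\lambda_{n+1}\in[0,1)$, whence
\begin{equation*}
\Im_{n+1}^{\delta_m}=\gamma_{n+1}^{\delta_m}+\lambda_{n+1}^{\delta_m}(\gamma_{n+1}^{\delta_m}-\gamma_n^{\delta_m})\longrightarrow\gamma_{n+1}+\lambda_{n+1}(\gamma_{n+1}-\gamma_n)=:\Im_{n+1}
\end{equation*}
and $w_{n+1}^{\delta_m}=\nabla\varphi^*(\Im_{n+1}^{\delta_m})\to\nabla\varphi^*(\Im_{n+1})=:w_{n+1}$ by continuity of $\nabla\varphi^*$. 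This closes the induction step, and a standard diagonal subsequence then makes all four convergences hold for every $k$ at once.

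It then remains to check that the limit sequence $(\Im_k,\gamma_k,u_k,w_k)$ lies in the set $\Xi$ of noise-free sequences, i.e. that $\{\lambda_k\}$ fulfils \autoref{(3.16)} at $\delta=0$ and the constraints \autoref{I}. The relations in \autoref{I} survive the limit: the integer inequalities are stable along the subsequence on which the $i_k^{\delta_m}$ are constant, and $0\le\lambda_k\le\pi_k(i_k)$ follows by continuity of $\pi_k$ in its data wherever $\|\gamma_k-\gamma_{k-1}\|\ne 0$, the case $\|\gamma_k-\gamma_{k-1}\|=0$ being vacuous since then $\lambda_k(\gamma_k-\gamma_{k-1})=0$. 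For \autoref{(3.16)}, recall that the $\lambda_k^{\delta_m}$ delivered by Algorithm $4.1$ satisfy \autoref{(3.16)} with the noisy data; its left-hand side converges to the exact-data left-hand side (it depends only on $\lambda_k^{\delta_m}$ and $\|\gamma_k^{\delta_m}-\gamma_{k-1}^{\delta_m}\|$), while $\upsilon_k^{\delta_m}\|F(w_k^{\delta_m})-v^{\delta_m}\|^s\to\upsilon_k\|F(w_k)-v\|^s$ (in the degenerate case $r_k=0$ this product tends to $0=\upsilon_k\|F(w_k)-v\|^s$, exactly as in the proof of Proposition $3.6$), so the limit yields \autoref{(3.16)} at $\delta=0$. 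These are precisely the computations carried out in \cite[Lemma 3.9]{Zhong}, to which I would refer for the routine details.

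The step I expect to be the main obstacle is the bookkeeping of the nested subsequence extractions: at each level $k$ one must first stabilise the integer counter $i_k^{\delta_m}$, the executed branch of Algorithm $4.1$ and the loop index at which it breaks, and only then extract a convergent subsequence of the bounded scalars $\lambda_k^{\delta_m}$, all while ensuring that the resulting diagonal sequence is a bona fide element of $\Xi$ rather than merely an abstract limit. The analytic ingredients — continuity of $F$, $L$, $J_s^V$, $\nabla\varphi^*$ and $\pi_k$, together with the $r_k=0$ degeneracy — are the same ones already used in Proposition $3.6$ and pose no real difficulty.
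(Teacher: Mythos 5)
Your proposal is correct and follows exactly the route the paper intends: the paper omits the proof of this proposition, stating only that it "can be developed on the similar lines of" Zhong et al., Lemma 3.9, and your argument --- induction as in Proposition 3.6 for the analytic limits, pigeonhole stabilisation of the bounded integer counters $i_k^{\delta_m}$ and of the executed branch of Algorithm 4.1, extraction of a convergent subsequence of the bounded $\lambda_k^{\delta_m}$, a diagonal pass, and the final verification that the limit lies in $\Xi$ --- is precisely that argument. No gap.
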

\begin{thm}
Let $V$ be uniformly smooth, $U$ be reflexive and assumptions of Subsection $3.1$ be satisfied.
Let the noisy data $\{v^{\delta}\}$ be such that $\autoref{1.2}$ holds. Further, assume that $\vartheta_5$ in $\autoref{(3.14)}$ is positive and the combination parameters $\lambda_k^{\delta_m}$ are deduced via Algorithm $4.1$ Let $k_{\delta}$ be the integer determined through $\autoref{(3.4)}$. Then for any subsequence $\{v^{\delta_m}\}$ of $\{v^{\delta}\}$ with $\delta_m\to 0$ as $m\to\infty$, by taking a subsequence of $\{v^{\delta_m}\}$, if necessary, there hold$$\lim_{m\to\infty}\mathcal{D}_{\gamma_{k_{\delta_m}}}^{\delta_m}\varphi(\bar{u}, u_{k_{\delta_m}}^{\delta_m})=0,\ \text{and}\ \ \lim_{m\to\infty}\|u_{k_{\delta_m}}^{\delta_m}-\bar{u}\|=0.$$
\end{thm}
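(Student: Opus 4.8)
The plan is to replay the proof of Theorem $3.2$ essentially line by line, the only structural change being that the continuous-dependence stability of Proposition $3.6$ is replaced by its subsequential counterpart Proposition $4.1$, which is precisely what remains available when $\lambda_k^{\delta}$ is produced by Algorithm $4.1$. So, fix a zero sequence $\{\delta_m\}$ together with the stopping indices $k_m:=k_*(\delta_m,v^{\delta_m})$ furnished by the discrepancy principle \autoref{(3.4)}. By Proposition $4.1$, after passing to a subsequence of $\{\delta_m\}$ (not relabelled), there is a noise-free iteration $\{(\Im_k,\gamma_k,u_k,w_k)\}\in\Xi$ — whose associated combination parameters satisfy \autoref{(3.16)}, \autoref{(3.17)} and \autoref{D}, as verified in the paragraph preceding Proposition $4.1$ — with $\Im_k^{\delta_m}\to\Im_k$, $\gamma_k^{\delta_m}\to\gamma_k$, $u_k^{\delta_m}\to u_k$ and $w_k^{\delta_m}\to w_k$ for every fixed $k$. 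Because this limit iteration lies in $\Xi$, Theorem $3.1$ applies to it and yields a solution $\bar{u}\in B(u_0,2\epsilon)\cap D(\varphi)$ of \autoref{1.1} with $\mathcal{D}_{\gamma_k}\varphi(\bar{u},u_k)\to 0$ and $\|u_k-\bar{u}\|\to 0$; this is the $\bar{u}$ asserted by the theorem. It then suffices to control $\mathcal{D}_{\gamma_{k_m}^{\delta_m}}\varphi(\bar{u},u_{k_m}^{\delta_m})$ as $m\to\infty$, for which I would split into the two usual cases.

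If $\{k_m\}$ has a finite accumulation point $K$, pass to a further subsequence with $k_m=K$ for all $m$. The discrepancy principle gives $\|F(w_K^{\delta_m})-v^{\delta_m}\|\le\tau\delta_m$, and arguing exactly as in the derivation of \autoref{E} produces a constant $\vartheta_7$ with $\|F(u_K^{\delta_m})-F(w_K^{\delta_m})\|\le\vartheta_7\|F(w_K^{\delta_m})-v^{\delta_m}\|$, hence $\|F(u_K^{\delta_m})-v^{\delta_m}\|\le(1+\vartheta_7)\tau\delta_m\to 0$. Letting $m\to\infty$ and using $u_K^{\delta_m}\to u_K$ with the continuity of $F$ gives $F(u_K)=v$, so $u_K$ solves \autoref{1.1} in $B(u_0,2\epsilon)\cap D(\varphi)$. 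Monotonicity of $\{\mathcal{D}_{\gamma_k}\varphi(u_K,u_k)\}$ (Proposition $3.4$ with $\delta=0$, legitimate since the limit iteration is in $\Xi$) forces $\mathcal{D}_{\gamma_k}\varphi(u_K,u_k)=0$ for $k\ge K$, whence $u_k=u_K$ for $k\ge K$ and Theorem $3.1$ gives $u_K=\bar{u}$; the weak lower semicontinuity of $\varphi$ together with $\gamma_{k_m}^{\delta_m}=\gamma_K^{\delta_m}\to\gamma_K$ and $u_{k_m}^{\delta_m}\to\bar{u}$ then forces $\mathcal{D}_{\gamma_{k_m}^{\delta_m}}\varphi(\bar{u},u_{k_m}^{\delta_m})\to 0$, exactly as in Theorem $3.2$. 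If instead $k_m\to\infty$, fix an arbitrary $k$; then $k_m>k$ for all large $m$, and Proposition $3.4$ gives $\mathcal{D}_{\gamma_{k_m}^{\delta_m}}\varphi(\bar{u},u_{k_m}^{\delta_m})\le\mathcal{D}_{\gamma_k^{\delta_m}}\varphi(\bar{u},u_k^{\delta_m})$; passing to the limit with $\gamma_k^{\delta_m}\to\gamma_k$, $u_k^{\delta_m}\to u_k$ and the lower semicontinuity of $\varphi$ yields $\limsup_{m\to\infty}\mathcal{D}_{\gamma_{k_m}^{\delta_m}}\varphi(\bar{u},u_{k_m}^{\delta_m})\le\mathcal{D}_{\gamma_k}\varphi(\bar{u},u_k)$, and since $k$ is arbitrary and $\mathcal{D}_{\gamma_k}\varphi(\bar{u},u_k)\to 0$ by Theorem $3.1$, this limit superior is $0$. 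In both cases $\mathcal{D}_{\gamma_{k_m}^{\delta_m}}\varphi(\bar{u},u_{k_m}^{\delta_m})\to 0$, and \autoref{(2.5)} gives $c_0\|u_{k_m}^{\delta_m}-\bar{u}\|^p\le\mathcal{D}_{\gamma_{k_m}^{\delta_m}}\varphi(\bar{u},u_{k_m}^{\delta_m})\to 0$, which is the claimed norm convergence.

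The main obstacle — and the only genuine difference from Theorem $3.2$ — is that Proposition $4.1$ delivers convergence only along a subsequence and only toward a noise-free iteration in $\Xi$ that depends on that subsequence. Thus the delicate point is the bookkeeping inside Proposition $4.1$: a diagonal extraction giving one subsequence along which $\Im_k^{\delta_m},\gamma_k^{\delta_m},u_k^{\delta_m},w_k^{\delta_m}$ all converge simultaneously for every $k$ (cf. \cite[Lemma $3.9$]{Zhong}), together with the verification that the limiting parameters $\{\lambda_k\}$ still obey \autoref{(3.16)}, \autoref{(3.17)} and \autoref{D} so that the limit iteration genuinely belongs to $\Xi$ and Theorem $3.1$ and Proposition $3.4$ may be invoked on it. Once that structural fact is granted, the two-case analysis and the semicontinuity argument above reproduce verbatim those of the proof of Theorem $3.2$.
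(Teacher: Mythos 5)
Your proposal is correct and follows exactly the route the paper intends: the paper omits the proof of this theorem, stating only that it can be developed along the lines of its Theorem $3.2$ (and of \cite[Theorem $3.10$]{Zhong}) with the subsequential stability of Proposition $4.1$ replacing the continuous-dependence stability of Proposition $3.6$, and your argument is precisely that adaptation, including the correct identification of the only genuinely new point — that the limit iteration delivered by Proposition $4.1$ lies in $\Xi$, so that its parameters satisfy \autoref{(3.16)}, \autoref{(3.17)} and \autoref{D} and Theorem $3.1$ and Proposition $3.4$ may be applied to it. The two-case analysis on the stopping indices and the lower-semicontinuity argument reproduce the paper's proof of Theorem $3.2$ faithfully.
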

{Finally, we end this subsection with the following remark in which we compare our method with  the method proposed in \cite{Zhong}}.

\begin{rema}
{ The two-point gradient method  proposed by  Zhong et al. \cite{Zhong} in Banach spaces  is based on the Landweber iteration and an extrapolation strategy. Motivated by this method, in this paper we proposed a novel two-point gradient method  based on  the modified Landweber iteration together with an  extrapolation strategy. To be more precise, on taking $\alpha_k=0\ \forall k$ in \autoref{(3.2)}, our method reduces to the method \autoref{(1.7)} stated in \cite{Zhong}. Since the iteratively regularized Landweber iteration is a generalization of the Landweber iteration (cf \cite{Scherzer} to see the advantages of iteratively regularized Landweber iteration over Landweber iteration), the main advantage of our novel two point gradient method \autoref{(3.2)} over the method  \autoref{(1.7)} of \cite{Zhong} is that   \autoref{(3.2)} is a generalized version of \autoref{(1.7)}}.

{Unfortunately, we don't have our own numerical results but we emphasize the fact that for $\alpha_k=0$ in \autoref{(3.2)}, all the numerical simulations presented in \cite{Zhong} also hold for  our scheme. This follows from \autoref{(3.2)} and proof of Proposition $3.2$, as  one can see that if $\alpha_k=0$ for all $k$, then the point $(4)$ of assumptions in subsection $3.1$ is not required in our analysis. And accordingly we need to update  $\upsilon_k^{\delta}$ in \autoref{(3.3)} and several other constants. Consequently, for $\alpha_k=0$ in \autoref{(3.2)}, our assumptions become exactly similar to that of \cite{Zhong} and, therefore,  this paper extends  the results of \cite{Zhong} for a generalized version of \autoref{(1.7)}}.
\end{rema}

\section{Example: Electrical Impedance Tomography (EIT)}
The main aim of this section is to show the validity of our method for which we discuss an example that satisfy the assumptions required in our framework. We consider  a  severely ill-posed Calder\'on's inverse problem which is the mathematical bedrock of  EIT \cite{Aless}. 

Let $\Psi\subset \mathbb{R}^n, n\geq 2$ be a bounded domain having smooth boundary and $u \in H^1(\Psi)$ satisfies the following Dirichlet problem:
 \begin{equation}\label{(5.1)}
 \begin{cases} \text{div}(\kappa \nabla u) = 0, \ \  \text{in} \  \Psi \\ \ \   u = f, \qquad   \ \quad \text{on} \ \partial \Psi.\end{cases} \end{equation} Here $f \in H^{1/2}(\partial \Psi)$ and $\kappa$ is the positive and bounded function representing the electrical conductivity of $\Psi$. Calder\'on's inverse  problem has many applications, for instance, in the fields of medical imaging,  nondestructive testing of materials etc. (cf. \cite{Aless}). 

The inverse problem associated with 
EIT can be formulated  as \begin{equation}\label{(5.2)}F : U \subset L_{+}^{\infty}(\Psi) \to L(H^{1/2}(\partial \Psi), H^{-1/2}(\partial \Psi)): F(\kappa) = \Lambda_{\kappa},\end{equation} where $L(H^{1/2}(\partial \Psi), H^{-1/2}(\partial \Psi))$ is the space of all bounded linear operators from $H^{1/2}(\partial \Psi)$ to  $H^{-1/2}(\partial \Psi)$ and Dirichlet to Neumann map $\Lambda_{\kappa}$ is defined as $$\Lambda_{\kappa}: H^{1/2}(\partial \Psi) \to H^{-1/2}(\partial \Psi): \ f \to \bigg(\kappa \frac{\partial u}{\partial \nu}\bigg)\bigg|_{\partial \Psi},$$ where the vector $\nu$ is the outward normal to $\partial \Psi$. The Fr\'echet derivative $F'$ of $F$  at $\kappa = \bar{\kappa}$ is given by \begin{equation*}
 F'(\bar{\kappa}): U \subset L^{\infty}(\Psi) \to L(H^{1/2}(\partial \Psi), H^{-1/2}(\partial \Psi)): \delta \kappa \to \ F'(\bar{\kappa})(\delta \kappa), 
 \end{equation*} where $F'(\bar{\kappa})(\delta \kappa)$ is defined via the following sesquilinear form \begin{equation*}
 \langle F'(\bar{\kappa})(\delta \kappa) f_1, \ f_2  \rangle = \int_{\Psi} \delta\kappa \nabla u_1 \cdot \nabla u_2\, dx, \quad f_1, f_2 \in H^{1/2}(\partial \Psi),\end{equation*} where $u_1$ and $u_2$ are the weak solutions of $$\begin{cases}
 \text{div} (\bar{\kappa} \nabla u_1) = 0 = \text{div}  (\bar{\kappa} \nabla u_2), \quad \text{in} \  \Psi \\ u_1 = f_1, \ \ u_2 = f_2 \qquad \quad \quad \qquad\ \  \text{on} \ \partial \Psi. \end{cases}$$ 
 For $n=2$ and the condition $\kappa \in L^{\infty}(\Psi)$, uniqueness of the solution to the inverse problem \autoref{(5.2)} has been discussed in \cite{Astala}. For $n\geq 3$, uniqueness of \autoref{(5.2)} has been discussed in \cite{Paiv} under the assumption that $\kappa\in W^{3/2, \infty}(\Psi)$. 
 
 To this end, let us recall a  Lipschitz estimate  established in \cite{Aless} for the inverse problem \autoref{(5.2)} under certain assumptions.
\begin{thm}
Let $\kappa_1, \kappa_2$ be two real piecewise constant  functions  such that $$\kappa_i(x)=\sum_{j=1}^N \kappa_j^i(x)\chi_{D_j}(x),\ x\in \Psi,\ \lambda\leq \kappa_i(x)\leq \lambda,\ i=1, 2,$$ where $\lambda\in (0, 1]$, $\kappa_j^i$ is an   unknown real number for each $i, j$, $\chi_{D_j}$ is a  characteristics function of the set $D_j$, $D_j$'s are known open sets,   and $N\in \mathbb{N}$. Then under certain assumptions on $\Psi$, $D_j$'s $($cf. \cite[Section $2.2$]{Aless}$)$,  we have \begin{equation*}
\|\kappa_1-\kappa_2\|_{L^{\infty}(\Psi)} \leq C \|\Lambda_{\kappa_1}-\Lambda_{\kappa_2}\|_{L(H^{1/2}(\partial \Psi), H^{-1/2}(\partial \Psi))},
\end{equation*} where $C$ is a constant. 
\end{thm}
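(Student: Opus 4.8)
The plan is to follow the Alessandrini--Vessella strategy: exploit the finite-dimensional parametrization, establish injectivity of both the forward map and of its linearization at every admissible conductivity, and then upgrade pointwise injectivity to a uniform (global) Lipschitz lower bound by a compactness argument.

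First I would record the analytic set-up. Because an admissible conductivity is completely determined by the vector $\kappa=(\kappa_1,\dots,\kappa_N)$ of its values on the fixed, known pieces $D_1,\dots,D_N$, the forward map factors through $\Phi:\mathcal{A}\to L(H^{1/2}(\partial\Psi),H^{-1/2}(\partial\Psi))$, $\Phi(\kappa)=\Lambda_\kappa$, where $\mathcal{A}=\{\kappa:\lambda\le\kappa_j\le\lambda^{-1},\ j=1,\dots,N\}$ is a compact convex subset of $\mathbb{R}^N$. Using the weak formulation of \autoref{(5.1)} together with standard elliptic estimates, one checks that $\Phi$ is real-analytic on a neighbourhood of $\mathcal{A}$, that its first derivative is the sesquilinear form already displayed in the excerpt, namely $\langle\Phi'(\kappa)[\delta\kappa]f_1,f_2\rangle=\int_\Psi\delta\kappa\,\nabla u_1\cdot\nabla u_2\,dx$, and that its second derivative is bounded uniformly on $\mathcal{A}$.

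Next come the two injectivity statements. Injectivity of $\Phi$ on $\mathcal{A}$ is exactly the uniqueness theorem for the Calder\'on problem for piecewise-constant conductivities (Astala--P\"aiv\"arinta type results in dimension two, and their higher-dimensional analogues under the stated regularity and geometry hypotheses on $\Psi$ and the $D_j$), so this step is invoked from the literature. The heart of the matter is injectivity of the linearization $\Phi'(\kappa)$ for every $\kappa\in\mathcal{A}$: if $\Phi'(\kappa)[\delta\kappa]=0$ with $\delta\kappa=\sum_j\delta\kappa_j\chi_{D_j}$, then $\sum_j\delta\kappa_j\int_{D_j}\nabla u_1\cdot\nabla u_2\,dx=0$ for all Dirichlet data $f_1,f_2$, and one must show that the finitely many vectors $\big(\int_{D_1}\nabla u_1\cdot\nabla u_2,\dots,\int_{D_N}\nabla u_1\cdot\nabla u_2\big)$ span $\mathbb{R}^N$ as $f_1,f_2$ range over boundary data, which forces $\delta\kappa=0$. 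This is the classical Calder\'on density argument: products of gradients of $\kappa$-harmonic functions are dense in the relevant sense via Runge approximation, which itself rests on weak unique continuation for the conductivity equation, and it is precisely here that the geometric assumptions on the $D_j$ are used to guarantee the spanning property.

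Finally I would globalize. The inverse function theorem applied at each $\kappa\in\mathcal{A}$, using injectivity of $\Phi'(\kappa)$, gives a neighbourhood on which $\Phi$ is bi-Lipschitz with constants depending on $\kappa$; compactness of $\mathcal{A}$ makes these local constants uniform over small balls. To pass to the global estimate $\|\kappa_1-\kappa_2\|_{L^\infty(\Psi)}\le C\|\Lambda_{\kappa_1}-\Lambda_{\kappa_2}\|$ I would argue by contradiction: if it failed there would be $\kappa_1^m\ne\kappa_2^m$ in $\mathcal{A}$ with $\|\Phi(\kappa_1^m)-\Phi(\kappa_2^m)\|/\|\kappa_1^m-\kappa_2^m\|\to 0$; passing to convergent subsequences $\kappa_i^m\to\bar\kappa_i$, either $\bar\kappa_1\ne\bar\kappa_2$, and then continuity of $\Phi$ contradicts its injectivity, or $\bar\kappa_1=\bar\kappa_2$, and then the pair eventually lies in the local bi-Lipschitz regime, again a contradiction. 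The resulting constant $C$ depends only on $\lambda$, $N$, $\Psi$ and the configuration of the $D_j$. I expect the quantitative injectivity of the linearized map to be the main obstacle, since it is there that unique continuation and the precise hypotheses on the inclusions are indispensable; the globalization step is by contrast a soft compactness argument.
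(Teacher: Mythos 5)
You should first be aware that the paper does not prove this statement at all: Theorem~5.1 is quoted from Alessandrini and Vessella \cite{Aless}, and the paper's ``proof'' is the citation. So the relevant comparison is with the proof in that reference, not with anything in this manuscript.

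Your sketch follows a genuinely different route from \cite{Aless}. You propose the abstract finite-dimensional principle (injectivity of $\kappa\mapsto\Lambda_\kappa$ on the compact parameter set, injectivity of the Fr\'echet derivative at every admissible $\kappa$, uniform $C^{1,1}$ bounds, then a compactness/contradiction globalization). That scheme is sound as stated --- the globalization step is the standard Bacchelli--Vessella-type lemma and works exactly as you describe --- but it buys a non-constructive constant, whereas Alessandrini and Vessella argue constructively: they build singular solutions (Green's functions with poles approaching the interfaces $\partial D_j$), use quantitative unique continuation (three-spheres/propagation of smallness) to carry boundary information inward, and estimate $|\kappa^1_j-\kappa^2_j|$ domain by domain along a chain of subdomains ordered from $\partial\Psi$ inward. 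The real issue with your version is the step you yourself flag as the ``main obstacle'': injectivity of $\Phi'(\kappa)$ restricted to $\mathrm{span}\{\chi_{D_1},\dots,\chi_{D_N}\}$ at an \emph{arbitrary} piecewise-constant background $\kappa$. The appeal to ``Calder\'on's density of products of gradients'' is only a theorem at constant background; for non-constant $\kappa$ the injectivity of the full linearized map is not a citable fact, and establishing it even on this $N$-dimensional subspace requires precisely the singular-solution construction and the chain/contact hypotheses on the $D_j$ that constitute the technical core of \cite{Aless}. In other words, your outline defers the entire difficulty into a step for which ``Runge approximation gives the spanning property'' is an assertion, not an argument. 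If you intend this as a self-contained alternative proof, that lemma must be proved (or at least located precisely in the literature); as it stands the proposal is a correct skeleton with its load-bearing bone missing.
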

Next, let us define the space $U$ in accodance with  Theorem $5.1$ as\begin{equation*}
 U= \text{span} \{\chi_{D_1}, \chi_{D_2}, \ldots, \chi_{D_N}\}\end{equation*} fitted with $L^p$ norm where $p > 1$, $D_i$'s, $\chi_{D_i}'s$ are same as in Theorem $5.1$. This choice of $U$ renders it as a reflexive Banach space. Additionally, we know the following result related to the continuity of $F$ and $F'$ and boundedness of $F'$ (cf. \cite[Subsection\ 5.3]{Hoop1}).
 
\begin{thm}Let the assumptions of Theorem $5.1$ hold. Then, we have the following:\begin{enumerate}
\item $F$ is Lipschitz continuous and satisfies the estimate \begin{equation*}
\|F(\kappa_1)-F(\kappa_2)\|_{L(H^{1/2}(\Psi), H^{-1/2}(\Psi))} \leq C_1 \|\kappa_1-\kappa_2\|_{L^p(\Psi)},
\end{equation*}
\item $F'$ is Lipschitz continuous and satisfies the estimate \begin{equation*}
\|F'(\kappa_1)-F'(\kappa_2)\|_{L(H^{1/2}(\Psi), H^{-1/2}(\Psi))} \leq C_2 \|\kappa_1-\kappa_2\|_{L^p(\Psi)},
\end{equation*}
\item $F'$ is bounded, i.e. $$\|F'\|_{L(U,L(H^{1/2}(\Psi), H^{-1/2}(\Psi)))}\leq C_3,$$ 
\end{enumerate}
where $C_1, C_2$ and $C_3$ are constants.
\end{thm}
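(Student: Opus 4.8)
The plan is to reduce all three assertions to a single uniform energy estimate for the Dirichlet problem \autoref{(5.1)}, combined with the fact that $U=\operatorname{span}\{\chi_{D_1},\dots,\chi_{D_N}\}$ is finite dimensional. First I would record that, for any admissible conductivity $\kappa$ (with the ellipticity bounds of Theorem $5.1$) and any $f\in H^{1/2}(\partial\Psi)$, the weak solution $u[\kappa,f]\in H^1(\Psi)$ of \autoref{(5.1)} satisfies
\[
\|\nabla u[\kappa,f]\|_{L^2(\Psi)}\le c_0\,\|f\|_{H^{1/2}(\partial\Psi)},
\]
where $c_0$ depends only on the ellipticity constants and on $\Psi$, \emph{not} on $\kappa$; this follows by writing $u[\kappa,f]=Ef+u_0$ for a bounded extension operator $E$ and $u_0\in H^1_0(\Psi)$, testing the weak formulation against $u_0$, and invoking Lax--Milgram. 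The uniformity of $c_0$ in $\kappa$ is precisely what forces the constants $C_1,C_2,C_3$ to be independent of the base points. I would also recall that $\langle\Lambda_\kappa f,g\rangle=\int_\Psi\kappa\,\nabla u[\kappa,f]\cdot\nabla(Eg)\,dx$ is independent of the chosen extension of $g$ (so that $\Lambda_\kappa$ is bounded $H^{1/2}(\partial\Psi)\to H^{-1/2}(\partial\Psi)$) and, using the symmetry of $\Lambda_\kappa$, that $\langle\Lambda_\kappa f,g\rangle=\int_\Psi\kappa\,\nabla u[\kappa,f]\cdot\nabla u[\kappa,g]\,dx$.

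For assertion (1) I would start from the bilinear identity
\[
\langle(\Lambda_{\kappa_1}-\Lambda_{\kappa_2})f_1,f_2\rangle=\int_\Psi(\kappa_1-\kappa_2)\,\nabla u[\kappa_1,f_1]\cdot\nabla u[\kappa_2,f_2]\,dx,
\]
obtained by writing $\langle\Lambda_{\kappa_1}f_1,f_2\rangle=\int_\Psi\kappa_1\nabla u[\kappa_1,f_1]\cdot\nabla u[\kappa_2,f_2]$ and $\langle\Lambda_{\kappa_2}f_1,f_2\rangle=\langle\Lambda_{\kappa_2}f_2,f_1\rangle=\int_\Psi\kappa_2\nabla u[\kappa_2,f_2]\cdot\nabla u[\kappa_1,f_1]$ and subtracting. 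Hölder's inequality and the energy bound then give $|\langle(\Lambda_{\kappa_1}-\Lambda_{\kappa_2})f_1,f_2\rangle|\le c_0^2\,\|\kappa_1-\kappa_2\|_{L^\infty(\Psi)}\,\|f_1\|_{H^{1/2}}\|f_2\|_{H^{1/2}}$, and taking suprema over the unit balls yields $\|F(\kappa_1)-F(\kappa_2)\|\le c_0^2\|\kappa_1-\kappa_2\|_{L^\infty(\Psi)}$. Since all norms on the finite-dimensional space $U$ are equivalent, $\|\kappa_1-\kappa_2\|_{L^\infty(\Psi)}\le c_{U,p}\|\kappa_1-\kappa_2\|_{L^p(\Psi)}$, which is (1). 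Applying the same estimate to the sesquilinear form $\langle F'(\bar\kappa)(\delta\kappa)f_1,f_2\rangle=\int_\Psi\delta\kappa\,\nabla u[\bar\kappa,f_1]\cdot\nabla u[\bar\kappa,f_2]\,dx$ gives $\|F'(\bar\kappa)(\delta\kappa)\|\le c_0^2\|\delta\kappa\|_{L^\infty(\Psi)}\le c_0^2 c_{U,p}\|\delta\kappa\|_{L^p(\Psi)}$, with the bound uniform in $\bar\kappa$; that is (3).

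For assertion (2) the extra ingredient is Lipschitz dependence of the solution on the coefficient. Setting $v:=u[\kappa_1,f]-u[\kappa_2,f]\in H^1_0(\Psi)$, from $\operatorname{div}(\kappa_1\nabla u[\kappa_1,f])=0=\operatorname{div}(\kappa_2\nabla u[\kappa_2,f])$ one gets $\operatorname{div}(\kappa_1\nabla v)=\operatorname{div}\big((\kappa_2-\kappa_1)\nabla u[\kappa_2,f]\big)$; testing against $v$ and using coercivity together with the previous energy bound yields $\|\nabla v\|_{L^2}\le c_1\|\kappa_1-\kappa_2\|_{L^\infty(\Psi)}\|f\|_{H^{1/2}}$. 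I would then write
\[
\langle(F'(\kappa_1)-F'(\kappa_2))(\delta\kappa)f_1,f_2\rangle=\int_\Psi\delta\kappa\,\big(\nabla u[\kappa_1,f_1]\cdot\nabla u[\kappa_1,f_2]-\nabla u[\kappa_2,f_1]\cdot\nabla u[\kappa_2,f_2]\big)\,dx,
\]
split the integrand as $\nabla(u[\kappa_1,f_1]-u[\kappa_2,f_1])\cdot\nabla u[\kappa_1,f_2]+\nabla u[\kappa_2,f_1]\cdot\nabla(u[\kappa_1,f_2]-u[\kappa_2,f_2])$, bound each product by Hölder, and substitute the Lipschitz bound together with the energy bound. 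Taking suprema and using equivalence of norms on $U$ once more would give $\|F'(\kappa_1)-F'(\kappa_2)\|\le C_2\|\kappa_1-\kappa_2\|_{L^p(\Psi)}$.

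All the individual computations are routine elliptic-PDE estimates. The step most likely to need care is the Lipschitz-in-coefficient bound for $\|\nabla v\|_{L^2}$, since it is the only place where a new auxiliary equation must be analyzed; everything else is Hölder's inequality plus the uniform energy estimate. Two bookkeeping points I would watch: keeping every constant dependent solely on the ellipticity constants and the geometry, so that $C_1,C_2,C_3$ are genuine constants, and noting that the passage from the $L^\infty(\Psi)$ bounds to the asserted $L^p(\Psi)$ bounds is exactly where the hypothesis that $U$ is the finite-dimensional span of the $\chi_{D_j}$ enters. I therefore expect no real obstacle, consistent with the result being quoted from \cite{Hoop1}.
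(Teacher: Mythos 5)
Your argument is correct, but it is worth noting that the paper does not prove this theorem at all: it is quoted verbatim from \cite[Subsection 5.3]{Hoop1}, so there is no in-paper proof to compare against. What you supply is the standard self-contained elliptic argument behind that citation: the uniform-in-$\kappa$ energy estimate from Lax--Milgram, Alessandrini's bilinear identity $\langle(\Lambda_{\kappa_1}-\Lambda_{\kappa_2})f_1,f_2\rangle=\int_\Psi(\kappa_1-\kappa_2)\nabla u[\kappa_1,f_1]\cdot\nabla u[\kappa_2,f_2]\,dx$ for (1), the same H\"older bound applied to the sesquilinear form defining $F'(\bar\kappa)$ for (3), and the coefficient-perturbation estimate $\|\nabla(u[\kappa_1,f]-u[\kappa_2,f])\|_{L^2}\lesssim\|\kappa_1-\kappa_2\|_{L^\infty}\|f\|_{H^{1/2}}$ for (2); all steps check out. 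Your version has the advantage of making visible exactly where the two hypotheses do work: the two-sided ellipticity bounds of Theorem 5.1 are what make $C_1,C_2,C_3$ independent of the base conductivities, and the finite-dimensionality of $U=\operatorname{span}\{\chi_{D_1},\dots,\chi_{D_N}\}$ is what permits the otherwise false passage from the naturally obtained $L^\infty(\Psi)$ bounds to the $L^p(\Psi)$ bounds asserted in the statement (the reverse inequality $\|\cdot\|_{L^p}\le\mu(\Psi)^{1/p}\|\cdot\|_{L^\infty}$, which the paper uses later in verifying the tangential cone condition, holds generally, but the direction you need does not). The only cosmetic caveat is that in item (2) the left-hand side is really the norm of $F'(\kappa_1)-F'(\kappa_2)$ in $L(U,L(H^{1/2},H^{-1/2}))$, which you handle correctly by also taking the supremum over $\delta\kappa$ in the unit ball of $U$.
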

 Finally, let us discuss how the assumptions of Subsection $3.1$ are satisfied for this inverse problem for $\varphi(u)=\frac{\|u\|_{L^{\infty}(\Psi)}^2}{2}$.
 \begin{enumerate}
\item Point $(2)$ of assumptions in Subsection $3.1$ holds by considering $\varphi(u)=\frac{\|u\|_{L^{\infty}(\Psi)}^2}{2}$.
 
 \item Point $(4)$ of assumptions Subsection $3.1$ holds for $p=2$ and $\varphi(u)=\frac{\|u\|_{L^{\infty}(\Psi)}^2}{2}$ via  Theorem $5.1$.
 \item Since the notions of strong topology and weak topology are same for finite dimensional space $U$, Point $(1)$ of assumptions in Subsection $3.1$ holds.
 \item   By considering $L(u)=F'(u)$, Point $(6)$ of assumptions in Subsection $3.1$ holds due to  $(3)$ of Theorem $5.2$.
 \item Consider the left side of inequality in Point $(5)$ of assumptions of Subsection $3.2$:$$\|F(\kappa_1) - F(\kappa_2) - F'(\kappa_2)(\kappa_1-\kappa_2)\|_{L(H^{1/2}(\Psi), H^{-1/2}(\Psi))}\hspace{40mm}$$ $$\hspace{5mm} \leq \|F(\kappa_1) - F(\kappa_2)\|_{L(H^{1/2}(\Psi), H^{-1/2}(\Psi))}+\|F'(\kappa_2)(\kappa_1-\kappa_2)\|_{L(H^{1/2}(\Psi), H^{-1/2}(\Psi))}.$$Utilize points $(1)$ and $(3)$ of Theorem $5.2$ in above to obtain $$\|F(\kappa_1) - F(\kappa_2) - F'(\kappa_2)(\kappa_1-\kappa_2)\|_{L(H^{1/2}(\Psi), H^{-1/2}(\Psi))}\hspace{40mm}$$ \begin{equation}\label{(5.3)}
   \leq C_1\|\kappa_1 - \kappa_2\|_{L^{p}(\Psi))}+C_3\|\kappa_1-\kappa_2\|_{L^{p}(\Psi)}.\end{equation}
We also know that $L^p$ norm is bounded by the $L^{\infty}$ norm, i.e. $$\|f\|_{L^{p}(\Psi)}\leq \mu(\Psi)^{\frac{1}{p}}\|f\|_{L^{\infty}(\Psi)},$$ where $\mu(\Psi)$ is the measure of $\Psi$. Substituting above inequality in $(5.3)$ to deduce that $$\|F(\kappa_1) - F(\kappa_2) - F'(\kappa_2)(\kappa_1-\kappa_2)\|_{L(H^{1/2}(\Psi), H^{-1/2}(\Psi))}\hspace{40mm}$$ $$\leq  \mu(\Psi)^{\frac{1}{p}}(C_1+C_3)\|\kappa_1-\kappa_2\|_{L^{\infty}(\Psi)}.$$
Thus, the required tangential cone condition holds,  provided $\mu(\Psi)^{\frac{1}{p}}(C_1+C_3)<1$ for $p=2$.
\end{enumerate}  
Since all the assumptions of Subsection $3.2$ hold, we conclude that our novel algorithm \autoref{(3.2)}  can be applied to solve severely ill-posed EIT problem.

\section{Discussion}
We have shown the convergence of a novel two point gradient method  \autoref{(3.2)} obtained by combining iteratively regularized Landweber iteration method together with an extrapolation strategy under classical assumptions. We have also discussed various possibilities for combination parameters together with an algorithm known as DBTS algorithm considerd in \cite{Hub}. Although no numerical results are yet available but we have discussed an example of severe ill-posed problem   which satisfy our assumptions.  Moreover, the theory is developed such that it remains compatible with the two point gradient method  introduced in \cite{Zhong} since our method with $\alpha_k=0$ in \autoref{(3.2)} reduces to the method discussed in \cite{Zhong}. 

Due to the numerical as well as analytic demonstration of the great reduction of the required number of iterations in \cite{Hub, Zhong}, two point gradient   methods can be a really good replacement to well known ‘fast’ iterative methods, like newton type methods, especially when large-scale inverse problems are considered. This is because one need to solve gigantic linear systems in each iteration step for  the  fast known iterative methods and therefore   they often become impracticable.

\bibliographystyle{plain}

\end{document}